\renewcommand{\mathcal}{\mathscr}
\definecolor{citation}{rgb}{0.2,0.5,0.2}
\definecolor{formula}{rgb}{0.1,0.2,0.5}
\definecolor{url}{rgb}{0,0.2,0.7}
\newtheorem{theorem}{Theorem}[section]
\newtheorem{corollary}[theorem]{Corollary}
\newtheorem{lemma}[theorem]{Lemma}
\newtheorem{prop}[theorem]{Proposition}
\theoremstyle{definition}
\newtheorem{defn}[theorem]{Definition}
\theoremstyle{remark}
\newtheorem{rem}[theorem]{Remark}
\numberwithin{equation}{section}
\newcommand{\R}{{\mathds R}}
\newlength{\defbaselineskip}
\newcommand{\setlinespacing}[1]
           {\setlength{\baselineskip}{#1 \defbaselineskip}}
\title
[A geometric inequality and a symmetry result.]
{A geometric inequality and a symmetry result for elliptic systems involving the fractional Laplacian.}
\author[S. Dipierro]{Serena Dipierro}
\address[Serena Dipierro]{SISSA, Sector of Mathematical Analysis
\\ Via Bonomea, 265 
\\ 34136 Trieste, Italy}
\email{\href{mailto:serydipierro@yahoo.it}{serydipierro@yahoo.it}}
\author[A. Pinamonti]{Andrea Pinamonti}
\address[Andrea Pinamonti]{Dipartimento di Matematica, Universit\`a di Padova \\ Via Trieste 63}
\email{\href{mailto:pinamonti@science.unitn.it}{pinamonti@science.unitn.it}}
\begin{document}
\vskip .2truecm

\keywords{Elliptic systems, fractional Laplacian, monotone solutions, stable solutions, phase separation, Poincar\'{e}-type inequality  \vspace{1mm}}

\thanks{The first author has been supported by FIRB ``Project Analysis and Beyond''. The second author has been supported by MIUR, Italy, GNAMPA of INDAM and by Fondazione CaRiPaRo Project ``Nonlinear Partial Differential Equations: models, analysis, and control-theoretic problems``}

\begin{abstract} 
We study the symmetry properties for solutions of elliptic systems of the type
\begin{eqnarray*}
\left\{ 
\begin{array}{ll} 
    \left(-\Delta\right)^{s_1} u = F_1(u, v),         \\
    \left(-\Delta\right)^{s_2} v= F_2(u, v),
 \end{array} 
\right.
\end{eqnarray*} 
where~$F\in C^{1,1}_{loc}\left(\R^2\right)$,~$s_1,s_2\in (0,1)$ and 
the operator~$\left(-\Delta\right)^s$ is the so-called fractional Laplacian. 
We obtain some Poincar\'e-type formulas for the~$\alpha$-harmonic extension in the half-space, 
that we use to prove a symmetry result both for stable and for monotone solutions.
\end{abstract}

\setcounter{tocdepth}{2}

\maketitle

\setlinespacing{1.09}

\section{Introduction}
In this paper we deal with the following system in~$\R^n$
\begin{eqnarray}\label{sistpart}
\left\{ 
\begin{array}{ll} 
    \left(-\Delta\right)^{s_1} u = F_1(u, v),         \\
    \left(-\Delta\right)^{s_2} v= F_2(u, v),
 \end{array} 
\right.
\end{eqnarray} 
where~$s_1,s_2\in(0,1)$, $F\in C^{1,1}_{loc}\left(\R^2\right)$, and~$F_1$ and~$F_2$ 
denote the derivatives of~$F$ with respect to the first and the second variable respectively. 

As customary, we denote by~$\left(-\Delta\right)^s$, with~$s\in (0,1)$, 
the fractional Laplacian. We recall that it can be defined, up to a multiplicative constant, by the following formula
\begin{equation}\label{fraclapl}
\left(-\Delta\right)^s u(x) = P.V. \int_{\R^n}\frac{u(x)-u(y)}{|x-y|^{n+2s}}\, dy, 
\end{equation}
where P.V. denotes the Cauchy principal value 
(see \cite{DPV} for the definition and for further details). 
 
If one looks at the quantity~$\left(-\Delta\right)^s$ from a distributional 
point of view, it is well-defined on every~$u$ belonging to the space 
$$ \mathcal T_s =\left\lbrace u\in\mathcal S'(\R^n) : \int_{\R^n}\frac{|u(x)|}{\left(1+|x|\right)^{n+2s}}\, dx< +\infty\right\rbrace \cap C^2_{loc}(\R^n). $$ 
We observe that, in particular, the fractional Laplacian is well-defined 
on smooth bounded functions. 

Notice that the integral in~\eqref{fraclapl} is convergent at infinity, because of 
the~$L^1$ assumption, 
and it is well-defined near the singularity, thanks to 
the assumption of~$C^2_{loc}$-regularity \footnote{We remark that for~$u\in C^2_{loc}(\R^n)$ the singular integral in~\eqref{fraclapl} is well-defined for any~$s\in (0,1)$. 
Obviously, depending on the values of~$s$, it is possible to relaxe such assumption. 
}. 

We also recall that the fractional Laplacian has a nice probabilistic interpretation, indeed 
it can be seen as the infinitesimal generator of a Levy process (see, for instance~\cite{Be} and~\cite{Va}). 

Equations containing the fractional Laplacian or more general nonlocal operators arise in several areas such as optimization, 
flame propagation and finance, see for instance~\cite{CRS, CT, DL}. 
In~\cite{ABS, Go} the authors studied phase transitions driven by fractional 
Laplacian-type boundary effects in a Gamma convergence setting. 
Finally, in~\cite{CCFS}, power-like nonlinearities for boundary reactions have also been considered.

Aim of the present paper is to prove some symmetry results for the system~\eqref{sistpart}. Similar results have been obtained in~\cite{BLWZ} for the system
\begin{eqnarray}\label{syst}
\left\{ 
\begin{array}{ll} 
    \Delta u   = uv^2,         \\
    \Delta v = vu^2, \\
    u, v>0.
 \end{array} 
\right. 
\end{eqnarray} 
A system like this arises in phase separation for multiple states of Bose-Einstein 
condensates. 
The authors proved the existence, symmetry and nondegeneracy of the solution to problem~\eqref{syst} 
in~$\R$; in particular, they showed that entire solutions are reflectionally symmetric, 
namely that there exists~$x_0$ such that~$u(x-x_0)=v(x_0-x)$. 
Moreover, they estabilished a result that may be considered the analogue of a famous 
conjecture of De Giorgi for problem~\eqref{syst} in dimension~$2$, 
that is they proved that monotone solutions of~\eqref{syst} in~$\R^2$ have one-dimensional symmetry 
under the additional growth condition
\begin{equation}\label{growth}
  u(x)+v(x) \leq C(1+|x|). 
\end{equation}
On the other hand, in~\cite{NTTV}, it has been proved that 
the linear growth is the lowest possible for solutions to~\eqref{syst}; 
in other words, if there exists~$\alpha\in (0,1)$ such that 
$$
   u(x)+v(x) \leq C(1+|x|)^{\alpha},
$$
then~$u=v\equiv 0$. 

In~\cite{BSWW} the authors proved that the above mentioned one-dimensional symmetry still holds in~$\R^2$ when the monotonicity condition is replaced by the stability of the solutions 
(which is a weaker assumption). 
Moreover, they showed that there exist solutions to~\eqref{syst} 
which do not satisfy the growth condition~\eqref{growth}, 
by constructing solutions with polynomial growth.  

In the paper~\cite{Wa} it has been proved that, for any~$n\geq 2$, 
a solution to~\eqref{syst} which is a local minimizer and satisfies the growth condition~\eqref{growth} 
has one-dimensional symmetry. 

In~\cite{FG} the authors proved that the symmetry result discovered in~\cite{BLWZ} holds
also for a more general class of nonlinearities. 

Finally, in~\cite{Di}, the author considered a class of quasilinear (possibly degenerate) elliptic systems in~$\R^n$ and proved that, under suitable assumptions, the solutions have one-dimensional symmetry, 
showing that the results obtained in~\cite{BLWZ, BSWW, FG} hold in a more general setting.
 
\bigskip

Symmetry results as the ones described above are also well-understood in the case of one equation. In particular, De Giorgi conjecture on the flatness 
of level sets of standard phase transition has been studied in dimension $n=2,3$, 
see~\cite{AAC, AC, BCN, GG, GG1}. 
Moreover, under an additional assumption on the behaviour of the solution at infinity, in~\cite{Sav} the author proved the conjecture up to dimension~$8$. 
Finally, in dimension~$n\geq9$ Del Pino, Kowalczyk and Wei constructed a solution 
to the Allen-Cahn equation which is monotone in one direction but not one-dimensional, 
see~\cite{DKW}. It is also worth noticing that an analogous of the De Giorgi conjecture has been studied for more general operators. In particular we mention~\cite{FSV}, 
where quasilinear operators of p-Laplacian and curvature type are considered, 
and~\cite{CSM}, where the authors proved a similar De Giorgi-type result for an equation involving the fractional Laplacian in dimension $n=2$ and $s=1/2$, 
see also~\cite{CaCi,CSir, CSir2} for further extensions.

In order to perform our analysis, we borrow a large number of ideas from~\cite{SV}, 
where the authors consider the following nonlocal equation
\begin{equation}\label{eqfrac} 
\left(-\Delta\right)^s u = f(u) \qquad \mathrm{in\ } \R^n, 
\end{equation}
and study the symmetry properties of the solutions. 
In particular, they prove that the analogue of the De Giorgi conjecture holds 
for equations of type \eqref{eqfrac} in dimension~$n=2$. 

The study of this nonlocal equation is based on the fact that problem~\eqref{eqfrac} 
can be reduced to the~$\alpha$-harmonic extension in the half-space.
In fact, one can consider the following boundary reaction problem for~$U=U(x,y)$, 
with~$x\in\R^n$ and~$y\in(0, +\infty)$, 
\begin{eqnarray*}
\left\{ 
\begin{array}{ll} 
div\left( y^\alpha \nabla U\right)=0, \qquad & \mathrm{on\ } \R^{n+1}_+ :=\R^n\times (0, +\infty),  \\
\lim_{y\rightarrow 0^+}\left(-y^\alpha \partial_y U\right)=f(U), \qquad & \mathrm{on\ } \R^n\times \left\lbrace 0\right\rbrace. 
 \end{array} 
\right. 
\end{eqnarray*} 
Then in~\cite{CS} the author proved that, up to a normalizing factor, 
the Dirichlet-to-Neumann 
operator~$\Gamma_\alpha :U|_{\partial\R^{n+1}_+}\mapsto -y^\alpha \partial_y U|_{\partial\R^{n+1}_+}$ 
is precisely~$\left(-\Delta\right)^{\frac{1-\alpha}{2}}$.  
This means that~$U(x,0)$ is a solution of 
$$ \left(-\Delta\right)^{\frac{1-\alpha}{2}} U(x,0)=f(U(x,0)). $$ 
Notice that the requirement~$\frac{1-\alpha}{2}=s$ in~\eqref{eqfrac} 
implies that~$\alpha\in(-1,1)$.

From a qualitative point of view, the result obtained in~\cite{CS} 
asserts that if one add a variable then one can localize 
the fractional Laplacian. 
This result has a fundamental role, for instance, in the regularity theory
for the quasigeostrophic model, see~\cite{CV}, and in the 
free boundary analysis, see~\cite{CSS}.

We notice that~$div\left(y^{\alpha}\nabla\right)$ is an elliptic degenerate operator, 
but, thanks to the fact that~$\alpha\in(-1,1)$, we have that 
the weight~$y^{\alpha}$ is integrable at~$0$. 
This type of weights falls into the set of the so-called~$A_2$-Muckenhoupt weights, see for example~\cite{Mu}. 
Remarkably, an almost complete theory for these equations is available, see~\cite{FJK, FKS}. 
In particular, one can obtain H\"older regularity, 
Poincar\'e-Sobolev-type estimates, Harnack and boundary Harnack principles. 

\bigskip

In this paper we want to show symmetry properties 
for phase separations driven by fractional Laplacian.  
Our proof will be rather simple, and we will require minimal assumptions; 
for instance, we will take the nonlinearities~$F_1,F_2$ to be just locally Lipschitz. 
Our approach  is based on a Poincar\'e-type formula, which involves the tangential gradients 
and the curvatures of the level sets of the solution. This type of inequalities are well known in the case of one equation. They were firstly studied in \cite{SZ1,SZ2} for the classical uniformly elliptic semilinear framework and then they were successfully applied to more general families (also degenerate) of equations in \cite{FSV}. We also recall \cite{FG,Di,DiPi} where Poincar\'e inequality has been studied in the case of systems.

\bigskip

We borrow a large number of ideas from~\cite{FSV} and~\cite{SV}, 
where the authors give some geometric insight on more general types of boundary reactions.

\bigskip 
We consider~$F\in C^{1,1}_{loc}\left(\R^2\right)$, 
and we study the following elliptic system in~$\R^n$
\begin{eqnarray}\label{systfrac}
\left\{ 
\begin{array}{ll} 
    \left(-\Delta\right)^{s_1} u = F_1(u, v),         \\
    \left(-\Delta\right)^{s_2} v= F_2(u, v),
 \end{array} 
\right. 
\end{eqnarray} 
where~$F_1$ and~$F_2$ denote the derivatives of~$F$ with respect to the first and the second variable respectively, and~$s_1, s_2\in (0,1)$. 
 
It is possible to prove, using a Poisson kernel extension (see~\cite{SV}), 
that~\eqref{systfrac} can be reduced 
to the following extension problem
\begin{eqnarray}\label{systext}
\left\{ 
\begin{array}{llll} 
div\left( y^{\alpha_1} \nabla U\right)=0, \qquad & \mathrm{on\ } \R^{n+1}_+ ,  \\
\lim_{y\rightarrow0^+}\left(-y^{\alpha_1} \partial_y U \right)=F_1(U,V), \qquad & \mathrm{on\ }  \partial\R^{n+1}_+, \\
div\left(y^{\alpha_2} \nabla V\right)=0, \qquad & \mathrm{on\ } \R^{n+1}_+,  \\
\lim_{y\rightarrow0^+}\left(-y^{\alpha_2} \partial_y V\right) =F_2(U,V), \qquad & \mathrm{on\ } \partial\R^{n+1}_+,
 \end{array} 
\right.  
\end{eqnarray}
where~$\alpha_1=1-2s_1$ and ~$\alpha_2=1-2s_2$. 

In our setting, we deal with weak solutions to~\eqref{systext}, 
that is, we require that~$U,V\in L^{\infty}_{loc}(\overline{\R^{n+1}_+})$, with 
\begin{equation}\label{cond}
y^{\alpha_1}|\nabla U|^2,\, y^{\alpha_2}|\nabla V|^2 \in L^1 \left(B_R^+\right) 
\end{equation}
for any~$R>0$, and that $U,V$ satisfy\footnote{We notice that~\eqref{weak} 
makes sense, thanks to condition~\eqref{cond}. 
In Lemma~\ref{lem3} we will see that 
if~$U$ and~$V$ are bounded, then it is always satisfied.} 
\begin{eqnarray}\label{weak}
\left\{ 
\begin{array}{ll} 
\displaystyle\int_{\R^{n+1}_+} y^{\alpha_1} \nabla U\cdot\nabla \xi_1 = \displaystyle\int_{\partial\R^{n+1}_+} F_1(U,V)\, \xi_1, \\
\displaystyle\int_{\R^{n+1}_+} y^{\alpha_2} \nabla V\cdot\nabla \xi_2 = \displaystyle\int_{\partial\R^{n+1}_+} F_2(U,V)\, \xi_2,
 \end{array} 
\right. 
\end{eqnarray} 
for any~$\xi_1,\xi_2:B_R^+\rightarrow\R$ bounded, locally Lipschitz 
in the interior of~$\R^{n+1}_+$, which vanish on~$\R^{n+1}_+\setminus B_R$ 
and such that 
\begin{equation}\label{xi} 
y^{\alpha_1}|\nabla\xi_1|^2,\, y^{\alpha_2}|\nabla\xi_2|^2\in L^1\left(B_R^+\right), 
\end{equation}
where, as usual, we denote by~$B_R^+:=B_R\cap\R^{n+1}_+$, 
and~$B_R$ is the ball of radius~$R$ centered at the origin in~$\R^{n+1}$. 

In order to state our main result, we give the definition of monotone and stable solution. 
\begin{defn}\label{monsol}
We say that a solution~$(U, V)$ of~\eqref{systext} satisfies 
a \emph{monotonicity condition} if 
\begin{equation}\label{monotonicity}
 U_{x_n} >0>V_{x_n}\quad \mbox{in}\ \R^{n+1}_{+}.
\end{equation}
\end{defn}

\begin{defn}\label{defstable}
When~$F\in C^2_{loc}(\R^2)$, we say that a solution~$(U,V)$ of 
\eqref{systext} is \emph{stable} if 
\begin{equation}\begin{split}\label{stable}
&\int_{B_R^+}y^{\alpha_1}|\nabla\xi_1|^2 + \int_{B_R^+}y^{\alpha_2}|\nabla\xi_2|^2 \\ &\qquad \qquad - 
\int_{\partial B_R^+}\left( F_{11}(U,V)\, \xi_1^2 + F_{22}(U,V)\, \xi_2^2 + 
2 F_{12}(U,V)\, \xi_1 \, \xi_2 \right)\geq 0,
\end{split}\end{equation}
for any~$\xi_1, \xi_2$ as above.
\end{defn}

In our general framework, since~$F_1$ and~$F_2$ may not be everywhere differentiable, 
the integral in~\eqref{stable} may not be well defined. 
Therefore it is convenient to introduce the sets 
$$ 
  \mathcal{D} :=  \left\lbrace (t,s)\in\R^2 : F_{11}(t,s),\, F_{12}(t,s),\,
 F_{22}(t,s) \mathrm{\ exist} \right\rbrace ,
$$
and 
$$ 
  \mathcal{N} := \R^2 \setminus\mathcal{D}.
$$
It is known that 
\begin{equation}\label{borel}
\mathrm{the\ set\ } \mathcal{N} \mathrm{\ is\ Borel\ and\ with\ zero\ Lebesgue\ measure\ } 
\end{equation}
(see pages 81--82 in~\cite{EG}).
Moreover, we consider the sets
$$
  \mathcal{N}_{uv} := \left\lbrace x\in\R^n : (u(x), v(x))\in\mathcal{N} \right\rbrace, 
$$
and
$$
  \mathcal{D}_{uv} := \R^n \setminus\mathcal{N}_{uv}. 
$$
Therefore, in our setting, we say that~$(U,V)$ is a stable solution to~\eqref{systext}, 
if
\begin{equation}\begin{split}\label{stable1}
&\int_{B_R^+}y^{\alpha_1}|\nabla\xi_1|^2 + \int_{B_R^+}y^{\alpha_2}|\nabla\xi_2|^2 \\ &\qquad \qquad - 
\int_{\partial B_R^+\cap\mathcal{D}_{uv}}\left( F_{11}(U,V)\, \xi_1^2 + F_{22}(U,V)\, \xi_2^2 + 
2 F_{12}(U,V)\, \xi_1 \, \xi_2 \right)\geq 0,
\end{split}\end{equation}
Of course,~\eqref{stable1} reduces to~\eqref{stable} when~$F$ is in~$C^2_{loc}(\R^2)$. 

\begin{rem} 
The stability condition~\eqref{stable} is usually related to minimization problems. 
In particular, it states that the energy functional associated to the system has positive (formal) second variation (we refer to~\cite{AAC, AC, FSV} for more details). 
It is worth noticing that, under an additional assumption on the sign of~$F_{12}$, 
the notion of monotonocity implies the one of stability (see Proposition~\ref{propMS}).
\end{rem} 

\bigskip

According to~\cite{FSV, SZ1, SZ2}, we introduce the following notation. 
For any fixed~$y>0$ and~$c\in\R$, we define the level sets
\begin{eqnarray*} 
S_U &=& S_{U,y,c}:=\left\lbrace x\in\R^n : U(x,y)=c\right\rbrace, \\
S_V &=& S_{V,y,c}:=\left\lbrace x\in\R^n : V(x,y)=c\right\rbrace.
\end{eqnarray*}
We also define 
\begin{eqnarray*} 
L_U &=& L_{U,y,c}:=\left\lbrace x\in S_U : \nabla_x U(x,y)\neq 0\right\rbrace, \\
L_V &=& L_{V,y,c}:=\left\lbrace x\in S_V : \nabla_x V(x,y)\neq 0\right\rbrace.
\end{eqnarray*}

Moreover, we recall that the tangential gradient~$\nabla_{L_U}$ and~$\nabla_{L_V}$ 
along~$L_U$ and~$L_V$ respectively 
is defined for every point~$x_1\in L_U$ and every point~$x_2\in L_V$, and for any~$G:\R^n\rightarrow\R$ smooth in the vicinity of~$x_1$ and~$x_2$ respectively as
\begin{eqnarray*}
\nabla_{L_U}G(x_1) &:=& \nabla_x G(x_1)-\left(\nabla_x G(x_1)\cdot \frac{\nabla_x U(x_1,y)}{|\nabla_x U(x_1,y)|}\right) \frac{\nabla_xU(x_1,y)}{|\nabla_x U(x_1,y)|}, \\
\nabla_{L_V}G(x_2) &:=& \nabla_x G(x_2)-\left(\nabla_x G(x_2)\cdot \frac{\nabla_x V(x_2,y)}{|\nabla_x V(x_2,y)|}\right) \frac{\nabla_x V(x_2,y)}{|\nabla_x V(x_2,y)|}.
\end{eqnarray*} 
Since~$L_U$ and~$L_V$ are smooth manifolds, we can define the total curvature as 
\begin{eqnarray*}
\mathcal K_U &:=& \sqrt{\sum_{j=1}^{n-1}\left(k_{U,j}(x,y)\right)^2}, \qquad \mathrm{for\ any\ }x\in L_U, \\ 
\mathcal K_V &:=& \sqrt{\sum_{j=1}^{n-1}\left(k_{V,j}(x,y)\right)^2}, \qquad \mathrm{for\ any\ }x\in L_V,
\end{eqnarray*}
where 
\begin{eqnarray*}
&& k_{U,1}(x,y), \ldots, k_{U, n-1}(x,y), \qquad \mathrm{for\ any\ }x\in L_U, \\
&& k_{V,1}(x,y), \ldots, k_{V, n-1}(x,y), \qquad \mathrm{for\ any\ }x\in L_V
\end{eqnarray*}
are the principal curvatures of~$L_U$ and~$L_V$ respectively.  

Finally, we set 
\begin{equation}\begin{split}\label{rU}
\mathcal R^{n+1}_U &:= \left\lbrace (x,y)\in \R^n\times (0,+\infty) \mathrm{\ s.t.\ } \nabla_x U(x,y)\neq 0\right\rbrace, \\
\mathcal R^{n+1}_V &:= \left\lbrace (x,y)\in \R^n\times (0,+\infty) \mathrm{\ s.t.\ } \nabla_x V(x,y)\neq 0\right\rbrace.
\end{split}\end{equation}

Now, we state a geometric formula both for monotone and for stable solutions to~\eqref{systext}: 
\begin{theorem}\label{TMon}
Let~$(U,V)$ be a monotone weak solution of~\eqref{systext} such that given $R>0$ there exists $C>0$, depending on $R$, such that
\begin{align}
\|\nabla_x U \|_{L^{\infty}(\R^n\times (0,R))}+ \|\nabla_x V\|_{L^{\infty}(\R^n\times (0,R))}\leq C.
\end{align}


Then, 
\begin{eqnarray*}
&&\int_{\mathcal R^{n+1}_U} y^{\alpha_1}\left(\mathcal K_U^2 |\nabla_x U|^2  +\Big|\nabla_{L_U}|\nabla_x U|\Big|^2\right) \varphi^2 \\ &&\qquad + \int_{\mathcal R^{n+1}_V} y^{\alpha_2}\left(\mathcal K_V^2 |\nabla_x V|^2 +\Big|\nabla_{L_V}|\nabla_x V|\Big|^2\right) \varphi^2  \\ 
\leq && \int_{\R^{n+1}_+} \left(y^{\alpha_1}|\nabla_xU|^2 +y^{\alpha_2}|\nabla_x V|^2\right) |\nabla\varphi|^2 \\&&\qquad 
+ \int_{\partial\R^{n+1}_+} F_{12}(U,V) \left|\sqrt{\frac{-V_{x_n}}{U_{x_n}}}\nabla_x U +\sqrt{\frac{U_{x_n}}{-V_{x_n}}}\nabla_x V \right|^2\varphi^2,
\end{eqnarray*}
for any~$R>0$, and any Lipschitz function~$\varphi:\R^{n+1}\rightarrow\R$ which vanishes on~$\R^{n+1}_+\setminus B_R$.
\end{theorem}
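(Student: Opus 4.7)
The plan is to couple three ingredients: the linearization of~\eqref{systext} by differentiating in $x_1,\dots,x_n$; a refinement of Proposition~\ref{propMS} that extracts a stability-type inequality from the monotonicity hypothesis \eqref{monotonicity} \emph{without} any sign requirement on $F_{12}$; and the pointwise Sternberg--Zumbrun identity applied at each fixed $y>0$ to $x\mapsto U(x,y)$ on the regular set $\mathcal R^{n+1}_U$ (and similarly for $V$). The structure of the boundary $F_{12}$--term in the statement will be dictated by the exact quadratic remainder produced when this stability inequality is tested with $\xi_1=|\nabla_x U|\varphi$, $\xi_2=|\nabla_x V|\varphi$.

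\emph{Stability from monotonicity.} Differentiating the interior equation and boundary condition of~\eqref{systext} in $x_i$ (using that the weights $y^{\alpha_j}$ are $x$--independent) shows that $U_{x_i}$ and $V_{x_i}$ weakly solve $\mathrm{div}(y^{\alpha_1}\nabla U_{x_i})=0$ and $\mathrm{div}(y^{\alpha_2}\nabla V_{x_i})=0$, with boundary data $\lim_{y\to 0^+}(-y^{\alpha_1}\partial_y U_{x_i})=F_{11}U_{x_i}+F_{12}V_{x_i}$ and $\lim_{y\to 0^+}(-y^{\alpha_2}\partial_y V_{x_i})=F_{12}U_{x_i}+F_{22}V_{x_i}$. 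Since \eqref{monotonicity} gives $U_{x_n}>0$ and $-V_{x_n}>0$, one may test the equation for $U_{x_n}$ against $\xi_1^2/U_{x_n}$ and the one for $-V_{x_n}$ against $\xi_2^2/(-V_{x_n})$; applying Cauchy--Schwarz to the resulting cross terms $2\xi\nabla\xi\cdot\nabla w/w$ and adding the two estimates yields
\begin{align*}
&\int_{\partial\R^{n+1}_+}\bigl(F_{11}\xi_1^2+2F_{12}\xi_1\xi_2+F_{22}\xi_2^2\bigr)\\
&\qquad\le\int_{\R^{n+1}_+}\bigl(y^{\alpha_1}|\nabla\xi_1|^2+y^{\alpha_2}|\nabla\xi_2|^2\bigr)+\int_{\partial\R^{n+1}_+}F_{12}\Bigl|\sqrt{\tfrac{-V_{x_n}}{U_{x_n}}}\xi_1+\sqrt{\tfrac{U_{x_n}}{-V_{x_n}}}\xi_2\Bigr|^2.
\end{align*}

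\emph{Linearization test and algebraic identity.} Next, I test the linearized equation for $U_{x_i}$ (resp.\ $V_{x_i}$) with $U_{x_i}\varphi^2$ (resp.\ $V_{x_i}\varphi^2$), sum in $i=1,\dots,n$, exploit $2\sum_i U_{x_i}\nabla U_{x_i}=\nabla|\nabla_x U|^2$ (and analogously for $V$), and add the two identities, obtaining
\begin{align*}
&\int_{\R^{n+1}_+}\Bigl(y^{\alpha_1}\textstyle\sum_i|\nabla U_{x_i}|^2+y^{\alpha_2}\sum_i|\nabla V_{x_i}|^2\Bigr)\varphi^2+\int_{\R^{n+1}_+}\varphi\bigl(y^{\alpha_1}\nabla|\nabla_x U|^2+y^{\alpha_2}\nabla|\nabla_x V|^2\bigr)\cdot\nabla\varphi\\
&\qquad=\int_{\partial\R^{n+1}_+}\bigl(F_{11}|\nabla_x U|^2+2F_{12}\nabla_x U\cdot\nabla_x V+F_{22}|\nabla_x V|^2\bigr)\varphi^2.
\end{align*}
Then I apply the stability inequality above with $\xi_1=|\nabla_x U|\varphi$ and $\xi_2=|\nabla_x V|\varphi$: its left-hand side differs from the boundary integrand just displayed by the quantity $2F_{12}\bigl(|\nabla_x U||\nabla_x V|-\nabla_x U\cdot\nabla_x V\bigr)$, and absorbing this correction into the $F_{12}$--quadratic term of stability, together with the identity $\sqrt{-V_{x_n}/U_{x_n}}\cdot\sqrt{U_{x_n}/(-V_{x_n})}=1$, produces exactly the integrand
\[F_{12}\Bigl|\sqrt{\tfrac{-V_{x_n}}{U_{x_n}}}\nabla_x U+\sqrt{\tfrac{U_{x_n}}{-V_{x_n}}}\nabla_x V\Bigr|^2\]
appearing in the statement. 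Finally, expanding $|\nabla(|\nabla_x U|\varphi)|^2=|\nabla|\nabla_x U||^2\varphi^2+|\nabla_x U|^2|\nabla\varphi|^2+\varphi\nabla|\nabla_x U|^2\cdot\nabla\varphi$, the mixed terms $\varphi\nabla|\nabla_x U|^2\cdot\nabla\varphi$ cancel exactly against their counterparts in the linearization identity, and one is left with
\begin{align*}
&\int_{\R^{n+1}_+}y^{\alpha_1}\bigl[\textstyle\sum_i|\nabla U_{x_i}|^2-|\nabla|\nabla_x U||^2\bigr]\varphi^2+\int_{\R^{n+1}_+}y^{\alpha_2}\bigl[\sum_i|\nabla V_{x_i}|^2-|\nabla|\nabla_x V||^2\bigr]\varphi^2\\
&\qquad\le\int_{\R^{n+1}_+}\bigl(y^{\alpha_1}|\nabla_x U|^2+y^{\alpha_2}|\nabla_x V|^2\bigr)|\nabla\varphi|^2+\int_{\partial\R^{n+1}_+}F_{12}\Bigl|\sqrt{\tfrac{-V_{x_n}}{U_{x_n}}}\nabla_x U+\sqrt{\tfrac{U_{x_n}}{-V_{x_n}}}\nabla_x V\Bigr|^2\varphi^2.
\end{align*}

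\emph{Sternberg--Zumbrun and regularization.} The Sternberg--Zumbrun identity applied on $\mathcal R^{n+1}_U$ to $x\mapsto U(x,y)$ at each fixed $y>0$ states $\sum_{i,j=1}^n(U_{x_ix_j})^2-|\nabla_x|\nabla_x U||^2=|\nabla_x U|^2\mathcal K_U^2+|\nabla_{L_U}|\nabla_x U||^2$, and combining it with the elementary estimate $|\nabla_x U_y|^2\ge(\partial_y|\nabla_x U|)^2$ furnished by Cauchy--Schwarz one obtains the pointwise inequality $\sum_i|\nabla U_{x_i}|^2-|\nabla|\nabla_x U||^2\ge\mathcal K_U^2|\nabla_x U|^2+|\nabla_{L_U}|\nabla_x U||^2$ on $\mathcal R^{n+1}_U$, and the analogue on $\mathcal R^{n+1}_V$. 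Substituting these pointwise bounds into the last display yields the theorem. The main technical obstacle is that $|\nabla_x U|$ is not differentiable on the critical set $\{\nabla_x U=0\}$, so the admissibility of $|\nabla_x U|\varphi$ as a test function and the validity of the geometric identity require a regularization: the standard device is to work with $\sqrt{|\nabla_x U|^2+\varepsilon^2}$ together with a horizontal cutoff supported in $\{y>\delta\}$ (harmless because $y^{\alpha_j}$ is an $A_2$--Muckenhoupt weight and hence integrable near $y=0$), and to pass to the limits $\varepsilon,\delta\to 0^+$ by dominated convergence; the required uniform domination is supplied by the $L^\infty$ bound on $\nabla_x U$ and $\nabla_x V$ in horizontal strips that is assumed in the statement.
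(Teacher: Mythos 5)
Your proposal is correct and follows essentially the same route as the paper's proof: the ground-state substitutions $\xi_1^2/U_{x_n}$ and $\xi_2^2/(-V_{x_n})$ turning monotonicity into a stability-type inequality (Proposition~\ref{prop1}), the linearized equations tested with $U_{x_j}\varphi^2$, the choice $\xi_1=|\nabla_x U|\varphi$, $\xi_2=|\nabla_x V|\varphi$, cancellation of the mixed $\varphi\nabla\varphi\cdot\nabla(|\nabla_x U|^2)$ terms, and the Sternberg--Zumbrun formula combined with the Cauchy--Schwarz bound $|\nabla_x(\partial_y U)|^2\geq(\partial_y|\nabla_x U|)^2$; the only difference is organizational, since you sum the two monotonicity inequalities into a single ``stability with an $F_{12}$-remainder'' statement before inserting the test functions, whereas the paper keeps them separate (Theorem~\ref{Tineq}) and sums only at the end, with identical algebra. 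The technicalities you delegate to regularization (admissibility of the test functions, the critical set $\{\nabla_x U=0\}$, and the restriction of the boundary integrals to the set where $F_{11},F_{12},F_{22}$ exist) are treated in the paper via Lemma~\ref{lemdensity}, the bounds \eqref{L1}--\eqref{L1bis}, and a Stampacchia-type argument, which amounts to the same standard device.
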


\begin{theorem}\label{T1}
Let~$(U,V)$ be a stable weak solution of~\eqref{systext} such that given $R>0$ there exists $C>0$, depending on $R$, such that
\begin{equation}\label{estnuova}
\|\nabla_x U \|_{L^{\infty}(\R^n\times (0,R))}+ \|\nabla_x V\|_{L^{\infty}(\R^n\times (0,R))}\leq C.
\end{equation}

Then, 
\begin{eqnarray*}
&&\int_{\mathcal R^{n+1}_U} y^{\alpha_1}\left(\mathcal K_U^2 |\nabla_x U|^2  +\Big|\nabla_{L_U}|\nabla_x U|\Big|^2\right) \varphi^2 \\ &&\qquad + \int_{\mathcal R^{n+1}_V} y^{\alpha_2}\left(\mathcal K_V^2 |\nabla_x V|^2 +\Big|\nabla_{L_V}|\nabla_x V|\Big|^2\right) \varphi^2  \\ 
\leq && \int_{\R^{n+1}_+} \left(y^{\alpha_1}|\nabla_xU|^2 +y^{\alpha_2}|\nabla_x V|^2\right) |\nabla\varphi|^2 \\&&\qquad - 2\int_{\partial\R^{n+1}_+} F_{12}(U,V) \left(|\nabla_x U|\cdot |\nabla_x V|-\nabla_x U\cdot \nabla_x V\right)\varphi^2,
\end{eqnarray*}
for any~$R>0$, and any Lipschitz function~$\varphi:\R^{n+1}\rightarrow\R$ which vanishes on~$\R^{n+1}_+\setminus B_R$.
\end{theorem}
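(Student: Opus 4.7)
The plan is to combine a Sternberg--Zumbrun pointwise geometric identity for each of $U$ and $V$ with the stability inequality~\eqref{stable1}, applied to the natural test functions $\xi_1=|\nabla_x U|\varphi$ and $\xi_2=|\nabla_x V|\varphi$. The first step is to differentiate both the bulk equations and the fractional Neumann conditions of~\eqref{systext} in the direction $x_k$, for $k=1,\dots,n$. Since $F\in C^{1,1}_{loc}(\R^2)$ and the local Lipschitz bound~\eqref{estnuova} holds, the components $U_k:=\partial_{x_k}U$ and $V_k:=\partial_{x_k}V$ satisfy, at a.e.\ point of $\mathcal{D}_{uv}$ on the boundary, the linearised system
$$\mathrm{div}(y^{\alpha_1}\nabla U_k)=0,\quad\lim_{y\to 0^+}(-y^{\alpha_1}\partial_y U_k)=F_{11}(U,V)U_k+F_{12}(U,V)V_k,$$
and symmetrically for $V_k$. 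Testing this weak equation against $U_k\varphi^2$, summing over $k$, and using the pointwise identity $\sum_k U_k\nabla U_k=|\nabla_x U|\nabla|\nabla_x U|$ on $\mathcal R^{n+1}_U$ together with the expansion of $|\nabla(|\nabla_x U|\varphi)|^2$, one obtains
\begin{align*}
&\int_{\R^{n+1}_+} y^{\alpha_1}\Bigl(\sum_{k=1}^n|\nabla U_k|^2-|\nabla|\nabla_x U||^2\Bigr)\varphi^2+\int_{\R^{n+1}_+} y^{\alpha_1}|\nabla(|\nabla_x U|\varphi)|^2\\
&\qquad=\int_{\R^{n+1}_+} y^{\alpha_1}|\nabla_x U|^2|\nabla\varphi|^2+\int_{\partial\R^{n+1}_+\cap\mathcal{D}_{uv}}\bigl(F_{11}|\nabla_x U|^2+F_{12}\,\nabla_x U\cdot\nabla_x V\bigr)\varphi^2,
\end{align*}
and an analogous identity for $V$.

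The second step is the Sternberg--Zumbrun identity applied to the map $x\mapsto U(x,y)$ for fixed $y$, which combined with a Cauchy--Schwarz estimate absorbing the mixed derivatives $U_{x_k y}$ yields the pointwise bound
$$\sum_{k=1}^n|\nabla U_k|^2-|\nabla|\nabla_x U||^2\ge\mathcal K_U^2|\nabla_x U|^2+|\nabla_{L_U}|\nabla_x U||^2\quad\text{on }\mathcal R^{n+1}_U,$$
so that the identity above becomes an upper bound for the curvature/tangential-gradient integrals on the left, and similarly for $V$. I would then insert $\xi_1=|\nabla_x U|\varphi$ and $\xi_2=|\nabla_x V|\varphi$ in~\eqref{stable1}, obtaining
$$\int_{\R^{n+1}_+} y^{\alpha_1}|\nabla(|\nabla_x U|\varphi)|^2+\int_{\R^{n+1}_+} y^{\alpha_2}|\nabla(|\nabla_x V|\varphi)|^2\ge\int_{\partial\R^{n+1}_+\cap\mathcal{D}_{uv}}\bigl(F_{11}|\nabla_x U|^2+F_{22}|\nabla_x V|^2+2F_{12}|\nabla_x U||\nabla_x V|\bigr)\varphi^2.$$
Adding the two curvature inequalities from the first step and subtracting this stability estimate cancels the $F_{11}|\nabla_x U|^2$ and $F_{22}|\nabla_x V|^2$ boundary contributions exactly; the remainder collapses to $2F_{12}(\nabla_x U\cdot\nabla_x V-|\nabla_x U||\nabla_x V|)$, which coincides with $-2F_{12}(|\nabla_x U||\nabla_x V|-\nabla_x U\cdot\nabla_x V)$, giving the stated inequality.

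The main obstacle I expect is the careful justification of the test-function choice in~\eqref{stable1} and in the linearisation: the functions $|\nabla_x U|\varphi$ and $|\nabla_x V|\varphi$ are only Lipschitz and may vanish on large sets, with no a priori smoothness across $\{\nabla_x U=0\}\cup\{\nabla_x V=0\}$, so one must regularise by $\sqrt{|\nabla_x U|^2+\eps^2}\,\varphi$ (and the analogue for $V$), verify admissibility via~\eqref{estnuova} and~\eqref{cond}, and pass to the limit using the $A_2$-Muckenhoupt regularity theory of~\cite{FJK,FKS}. A parallel subtlety is that the linearised equations for $U_k,V_k$ are naturally available only on $\mathcal{D}_{uv}$ because $F\in C^{1,1}_{loc}$ need not be $C^2$; however $\mathcal{N}_{uv}$ is Lebesgue-negligible by~\eqref{borel}, so restricting the boundary integrals to $\mathcal{D}_{uv}$ loses no contribution and is consistent with the formulation of stability in~\eqref{stable1}.
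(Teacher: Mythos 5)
Your argument is essentially the paper's own proof: the paper likewise tests the differentiated equations \eqref{estU}--\eqref{estV} with $U_{x_j}\varphi^2$ and $V_{x_j}\varphi^2$, inserts $\xi_1=|\nabla_x U|\varphi$, $\xi_2=|\nabla_x V|\varphi$ into the stability condition \eqref{stable1}, subtracts so that the $F_{11}$ and $F_{22}$ boundary terms cancel, and concludes via the Sternberg--Zumbrun formula (formula $(2.10)$ of \cite{FSV}) together with the Cauchy--Schwarz bound on the $\partial_y$-terms; admissibility of the test functions is handled there through \eqref{estnuova}, Lemma \ref{lem1} and Lemma \ref{lemdensity} rather than by an $\varepsilon$-regularization, which is only a cosmetic difference. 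The one place where your justification is off is the last remark: \eqref{borel} says that $\mathcal N\subset\R^2$ is Lebesgue-null, \emph{not} that its preimage $\mathcal N_{uv}\subset\R^n$ is null (it can even be all of $\R^n$, e.g.\ when $(u,v)$ is constant at a value where $F$ fails to be twice differentiable). The identification of the boundary integral over $\mathcal D_{uv}$ with the one over $\partial\R^{n+1}_+$ appearing in the statement is instead obtained, as in the paper, from the Stampacchia-type theorem (Theorem $6.19$ in \cite{LL}), which gives $\nabla_x U=\nabla_x V=0$ almost everywhere on $\mathcal N_{uv}$, so that the integrand $F_{12}(U,V)\left(|\nabla_x U|\,|\nabla_x V|-\nabla_x U\cdot\nabla_x V\right)\varphi^2$ vanishes there; the same device is what rigorously allows restricting the bulk integrals to $\mathcal R^{n+1}_U$ and $\mathcal R^{n+1}_V$, a step you use implicitly.
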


An immediate consequence of Theorems~\ref{TMon} and~\ref{T1} is the following:
\begin{corollary}\label{cor1}
Let~$(U,V)$ be a weak solution of~\eqref{systext} such that given $R>0$ there exists $C>0$, depending on $R$, such that
\begin{align}
\|\nabla_x U \|_{L^{\infty}(\R^n\times (0,R))}+ \|\nabla_x V\|_{L^{\infty}(\R^n\times (0,R))}\leq C.
\end{align}

Suppose that either
\begin{equation*}
\mbox{the\ monotonicity\ condition\ \eqref{monotonicity}\ holds,\ and\ $F_{12}(U,V)\leq 0$}  , \end{equation*}
or   
\begin{equation*}
  \mbox{$(U,V)$\ is\ stable,\ and\ $F_{12}(U,V)\geq 0$} . 
\end{equation*}  

Then,
\begin{equation}\begin{split}\label{geom}
&\int_{\mathcal R^{n+1}_U} y^{\alpha_1}\left(\mathcal K_U^2 |\nabla_x U|^2  +\Big|\nabla_{L_U}|\nabla_x U|\Big|^2\right) \varphi^2 \\ &\qquad + \int_{\mathcal R^{n+1}_V} y^{\alpha_2}\left(\mathcal K_V^2 |\nabla_x V|^2 +\Big|\nabla_{L_V}|\nabla_x V|\Big|^2\right) \varphi^2  \\ 
\leq & \int_{\R^{n+1}_+} \left(y^{\alpha_1}|\nabla_xU|^2 +y^{\alpha_2}|\nabla_x V|^2\right) |\nabla\varphi|^2,
\end{split}\end{equation}
for any~$R>0$, and any Lipschitz function~$\varphi:\R^{n+1}\rightarrow\R$ which vanishes on~$\R^{n+1}_+\setminus B_R$.
\end{corollary}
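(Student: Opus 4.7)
The plan is to deduce Corollary \ref{cor1} directly from Theorems \ref{TMon} and \ref{T1}, by showing that under each of the two sign assumptions on $F_{12}(U,V)$ the extra boundary term on the right-hand side is non-positive and can therefore be discarded.

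First I would treat the monotone case. Assuming \eqref{monotonicity}, I can apply Theorem \ref{TMon}; since $U_{x_n}>0$ and $V_{x_n}<0$, the quotients $-V_{x_n}/U_{x_n}$ and $U_{x_n}/(-V_{x_n})$ are strictly positive, so the square roots appearing in the boundary term are well defined and the expression
\[
\left|\sqrt{\tfrac{-V_{x_n}}{U_{x_n}}}\nabla_x U+\sqrt{\tfrac{U_{x_n}}{-V_{x_n}}}\nabla_x V\right|^{2}
\]
is pointwise non-negative. Under the hypothesis $F_{12}(U,V)\le 0$, the full integrand on $\partial\R^{n+1}_+$ is therefore non-positive, so dropping it preserves the inequality and yields exactly \eqref{geom}.

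Next I would treat the stable case. Starting from Theorem \ref{T1}, I would observe that the Cauchy--Schwarz inequality gives
\[
|\nabla_x U|\cdot|\nabla_x V|-\nabla_x U\cdot\nabla_x V\;\ge\;0
\]
pointwise on $\partial\R^{n+1}_+$. Under the hypothesis $F_{12}(U,V)\ge 0$, the product with $-2 F_{12}(U,V)$ is therefore non-positive, so again the boundary contribution can be dropped, leaving precisely \eqref{geom}.

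There is no real obstacle here: the corollary is a direct consequence of the two theorems once one recognises the sign structure of the boundary integrands, and both sign arguments (positivity of a square, Cauchy--Schwarz) are elementary. The only point that deserves a brief mention is that the square roots in the monotone formula are meaningful precisely because \eqref{monotonicity} gives strict signs to $U_{x_n}$ and $V_{x_n}$ on $\R^{n+1}_+$, so the boundary expression is well defined on $\partial\R^{n+1}_+$ wherever it is evaluated.
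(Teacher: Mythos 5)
Your proposal is correct and is exactly the argument the paper intends: the corollary is stated as an immediate consequence of Theorems \ref{TMon} and \ref{T1}, and the sign observations you make (non-negativity of the squared sum in the monotone case, Cauchy--Schwarz in the stable case, combined with the respective sign hypotheses on $F_{12}(U,V)$) are precisely what allows the boundary terms to be dropped.
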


Notice that we can consider the geometric formula \eqref{geom} 
as a weighted Poincar\'e inequality, because the weighted 
$L^2$-norm of any test function is bounded by a weighted $L^2$-norm of its gradient.

The second result that we state is a symmetry result in dimension $n=2$ for the system~\eqref{systext}: 
\begin{theorem}\label{T2} 
Let~$(U,V)$ be a bounded weak solution of~\eqref{systext} and let~$n=2$. 

Suppose that either
\begin{equation*}
\mbox{the\ monotonicity\ condition\ \eqref{monotonicity}\ holds,\ and\ $F_{12}(U,V)\leq 0$}  , \end{equation*}
or   
\begin{equation*}
  \mbox{$(U,V)$\ is\ stable,\ and\ $F_{12}(U,V)\geq 0$} . 
\end{equation*}  

Then, there exist~$\omega_U,\omega_V\in S^1$, 
and~$U_0,V_0:\R\times[0,+\infty)\rightarrow\R$ such that 
$$ U(x,y)=U_0(\omega_U\cdot x,y), \qquad V(x,y)=V_0(\omega_V\cdot x,y) $$
for any~$(x,y)\in\R^3_+$. 
\end{theorem}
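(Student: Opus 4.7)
The plan is to apply Corollary~\ref{cor1} with a logarithmic, capacity-type cutoff calibrated to dimension $n=2$, pass to the limit so that the right-hand side of~\eqref{geom} vanishes, and then read off one-dimensional symmetry for $U$ and $V$ from the resulting vanishing of curvature and tangential-gradient terms. As a preliminary, since Corollary~\ref{cor1} requires the slab gradient bound~\eqref{estnuova} while Theorem~\ref{T2} only assumes boundedness, I would first promote boundedness of $(U,V)$ to a local $C^{1,\beta}$ estimate up to $\{y=0\}$ via the regularity theory for the $A_2$--weighted operator $\mathrm{div}(y^{\alpha_i}\nabla\cdot)$, together with the local Lipschitz character of $F_1,F_2$ on the bounded range of $(U,V)$; translation invariance in $x$ then yields the required uniform bound on each slab $\R^n\times(0,R)$. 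A standard test of~\eqref{weak} with $\eta^2 U$ and $\eta^2 V$, for a radial cutoff $\eta$, and the boundedness of $U,V$ and $F_i$, give the energy estimate
\begin{equation*}
\int_{B_R^+}\bigl(y^{\alpha_1}|\nabla U|^2+y^{\alpha_2}|\nabla V|^2\bigr)\,\leq\,C\,R^n,
\end{equation*}
which for $n=2$ reads $CR^2$.

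Next, I would plug into~\eqref{geom} the cutoff
\begin{equation*}
\varphi_R(z)\,:=\,\min\Bigl\{1,\;\max\bigl\{0,\;\tfrac{\log(R/|z|)}{\log\sqrt R}\bigr\}\Bigr\},
\end{equation*}
which equals $1$ on $B_{\sqrt R}$, vanishes outside $B_R$, and satisfies $|\nabla\varphi_R|(z)\leq C/(|z|\log R)$ on the annulus $\{\sqrt R\leq|z|\leq R\}$. Decomposing this annulus into dyadic rings and applying the energy estimate on each, the contribution of the ring $\{2^k\leq|z|\leq 2^{k+1}\}$ to the right-hand side of~\eqref{geom} is of order $(\log R)^{-2}\cdot 2^{k(n-2)}$, which at $n=2$ is $O((\log R)^{-2})$; since there are $\simeq \log R$ such rings between $\sqrt R$ and $R$, this gives
\begin{equation*}
\int_{\R^{n+1}_+}\bigl(y^{\alpha_1}|\nabla_x U|^2+y^{\alpha_2}|\nabla_x V|^2\bigr)|\nabla\varphi_R|^2\,\lesssim\,\frac{1}{\log R}\xrightarrow[R\to\infty]{}0.
\end{equation*}
Applying Fatou to the non-negative left-hand side of~\eqref{geom} (noting $\varphi_R^2\to 1$ pointwise as $R\to\infty$) then forces $\mathcal K_U|\nabla_x U|=0$ and $|\nabla_{L_U}|\nabla_x U||=0$ almost everywhere on $\mathcal R^{n+1}_U$, and the analogous identities on $\mathcal R^{n+1}_V$.

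Finally, I would extract the symmetry. Fix $y>0$: the horizontal level curves of $U(\cdot,y)$ inside $\mathcal R^{n+1}_U$ are smooth plane curves of vanishing curvature, hence straight lines, and $|\nabla_x U|$ is constant along each. Two non-parallel lines in $\R^2$ would meet, forcing $U(\cdot,y)$ to take two values at one point, so these level lines share a common direction $\omega_U(y)\in S^1$ and $U(x,y)=U_0(\omega_U(y)\cdot x,y)$. Differentiating this ansatz and inserting it into the extension equation $\mathrm{div}(y^{\alpha_1}\nabla U)=0$ (or propagating via the argument of~\cite{FSV, SV}) then yields $\omega_U'\equiv 0$, and the same reasoning for $V$ produces $\omega_V$. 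The main obstacle is the logarithmic balancing in the previous paragraph: it succeeds precisely because $n=2$ makes the dyadic summand $2^{k(n-2)}$ constant, so that the number of rings (a factor $\log R$) is exactly cancelled by one power of $(\log R)^{-2}$ from the capacity cutoff; this parallels the single-equation case treated in~\cite{SV}. Note that the sign hypothesis on $F_{12}(U,V)$ enters only through Corollary~\ref{cor1}, after which the argument is purely geometric and the monotone and stable cases are handled simultaneously.
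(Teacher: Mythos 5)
Up to the point where you conclude that $\mathcal K_U=0=\big|\nabla_{L_U}|\nabla_x U|\big|$ on $\mathcal R^{n+1}_U$ (and likewise for $V$), your argument is essentially the paper's: your energy estimate is Lemma~\ref{lem3} (giving \eqref{condy}, i.e.\ \eqref{R2}), your cutoff is exactly \eqref{fiR}, and your dyadic-ring computation is just a repackaging of Lemma~\ref{lemh}; your preliminary step supplying the slab gradient bound from boundedness is also a reasonable way to bridge the hypotheses of Theorem~\ref{T2} and Corollary~\ref{cor1}. The genuine gap is in the last step. What the vanishing of the curvature and tangential gradient gives you (via Lemma~2.11 in \cite{FSV} or Lemma~5 in \cite{Di}) is only the slice-wise statement \eqref{1D}: for each $y>0$ there is a direction $\omega_U(y)\in S^{1}$ with $U(x,y)=U_0(\omega_U(y)\cdot x,y)$, and a priori $\omega_U(y)$ may rotate with $y$. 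Your proposed fix --- ``differentiating this ansatz and inserting it into $\mathrm{div}(y^{\alpha_1}\nabla U)=0$ yields $\omega_U'\equiv0$'' --- is not a proof: $\omega_U(y)$ is not uniquely determined (on slices where $\nabla_x U(\cdot,y)\equiv0$ any direction works), no measurability or differentiability of $y\mapsto\omega_U(y)$ or of $U_0(\cdot,y)$ in $y$ has been established, so there is nothing you are entitled to differentiate; the parenthetical appeal to ``the argument of \cite{FSV,SV}'' does not identify that argument.

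The paper closes this gap differently, and this is the only genuinely nontrivial part of the proof beyond the Poincar\'e inequality. Using compactness of $S^{1}$ one extracts a sequence $y_j\to0^+$ with $\omega_U(y_j)\to\omega_U$, $\omega_V(y_j)\to\omega_V$, and by the continuity up to $\{y=0\}$ (Lemma~\ref{lem:regUV}) the traces satisfy $u(x)=u_0(\omega_U\cdot x)$, $v(x)=v_0(\omega_V\cdot x)$ with \emph{fixed} directions. One then defines $U^*,V^*$ as the Poisson-kernel extensions \eqref{extU}--\eqref{extV} of these one-dimensional traces, so that $U^*(x,y)=U_0^*(\omega_U\cdot x,y)$ with the same direction at every height, and observes that $\overline U:=U-U^*$ is bounded, solves $\mathrm{div}(y^{\alpha_1}\nabla\overline U)=0$ in $\R^{n+1}_+$, and has zero trace; a Liouville-type theorem for this degenerate equation (footnote~3 in \cite{SV}, p.~431 in \cite{CSS}) then forces $\overline U\equiv0$, hence $U=U^*$ and the direction is independent of $y$ (same for $V$). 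Without this step --- or some rigorous substitute that controls the $y$-dependence of the slice directions --- your argument only proves the weaker statement \eqref{1D}, not Theorem~\ref{T2}.
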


Theorem~\ref{T2} says that, for any fixed~$y>0$, 
the functions~$x\in\R^2\rightarrow U(x,y)$ and~$x\in\R^2\rightarrow V(x,y)$ 
depend only on one variable.

We finally state the symmetry result for the system~\eqref{systfrac}.
For this, we denote by~$\Im(u,v)$ the image on the
map~$(u,v):\R^n\rightarrow\R^2$, i.e.
$$ \Im(u,v):=\{(u(x),v(x)),\ x\in\R^n\}.$$
Of course, the behavior of~$F$ is relevant for our problem
only at points of~$\Im(u,v)$. Then the following symmetry result
holds:
 
\begin{theorem}\label{T3} 
Let~$u,v\in C^2_{loc}(\R^n)$ be a bounded solution of~\eqref{systfrac}, 
with~$n=2$ and~$F\in C^{1,1}_{loc}(\R^2)$. 

Suppose that either  
$$ u_{x_2}>0>v_{x_2} \mbox{\ and\ $F_{12}(u,v)\leq0$}\quad \mbox{in}\ \Im(u,v), $$
or 
$$ \mbox{condition\ \eqref{stable1}\ holds\ and\ $F_{12}(u,v)\geq0$}\quad \mbox{in}\ \Im(u,v). $$

Then, there exist~$\omega_u,\omega_v\in S^1$ and~$u_0,v_0:\R\rightarrow\R$ such that 
$$ u(x)=u_0(\omega_u\cdot x), \qquad v(x)=v_0(\omega_v\cdot x), $$
for any~$x\in\R^2$. 

Moreover, if we assume in addition that either  
\begin{equation}\begin{split}\label{monF12}
&\mbox{$u_{x_2}>0>v_{x_2}$ \ and\ there\ exists\ a\ non-empty\ open\ set~$\Omega\subseteq\R^2$}\\ &\mbox{such\ that\ $F_{12}(u(x),v(x))<0$\ for\ any\ $x\in\Omega$,}
\end{split}\end{equation}
or
\begin{equation}\label{stabF12}\begin{split}
&\mbox{condition\ \eqref{stable1}\ holds\ and}\\
&\mbox{
there exist two open intervals $I_u$, $I_v\subset\R$ such that
$(I_u\times I_v)\cap \Im(u,v)\ne\varnothing$} \\
&\mbox{
and
$F_{12}(\bar u,\bar v)>0$
for any $(\bar u,\bar v)\in I_u\times I_v$}, 
\end{split}\end{equation}
then~$u$ and~$v$ have one-dimensional symmetry, and~$\omega_{u}=\omega_{v}$\footnote{Notice that if~$F_{12}$ is continuous in~$\R^n$, then both in~\eqref{monF12} and 
in~\eqref{stabF12} it is sufficient to require that there exists a point~$\bar x\in\R^n$ 
such that~$F_{12}(u(\bar x),v(\bar x))<0$ and~$F_{12}(u(\bar x),v(\bar x))>0$ respectively.}.
\end{theorem}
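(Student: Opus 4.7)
My plan is to reduce Theorem~\ref{T3} to Theorem~\ref{T2} via the $\alpha_i$-harmonic extension and then upgrade the conclusion from arbitrary~$\omega_u,\omega_v$ to~$\omega_u=\omega_v$ by going back to the sharper Poincar\'e formulas of Theorems~\ref{TMon} and~\ref{T1}. First I would lift the bounded solution~$(u,v)$ of~\eqref{systfrac} to a pair~$(U,V)$ on~$\R^{3}_+$ by taking the Poisson kernel extensions associated to the weights~$y^{\alpha_i}$, $\alpha_i=1-2s_i$, as recalled before~\eqref{systext}. Since~$u,v\in L^\infty\cap C^2_{loc}$, weighted elliptic estimates give that~$(U,V)$ is a bounded weak solution of~\eqref{systext} with~$\nabla_x U,\nabla_x V$ bounded on every strip~$\R^2\times(0,R)$, so~\eqref{cond} and the growth assumption of Theorem~\ref{T2} are both satisfied.

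Next I would transfer the structural hypothesis to~$(U,V)$. In the monotone case, $U_{x_2}$ is itself the Poisson extension of the trace~$u_{x_2}$, and positivity of the kernel upgrades~$u_{x_2}>0$ to~$U_{x_2}>0$ on~$\R^{3}_+$; symmetrically $V_{x_2}<0$. In the stable case, since any bulk test pair~$(\xi_1,\xi_2)$ on~$\R^{3}_+$ has weighted Dirichlet energy bounding the fractional energy of its trace (which is the quadratic form behind~\eqref{stable1}), stability of~$(u,v)$ lifts to stability of~$(U,V)$ in the sense of Definition~\ref{defstable}. The sign hypothesis on~$F_{12}$ is preserved because~$(U(\cdot,0),V(\cdot,0))=(u,v)$ takes values in~$\Im(u,v)$. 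Thus Theorem~\ref{T2} applies and yields directions~$\omega_U,\omega_V\in S^1$ and profiles~$U_0,V_0$ with~$U(x,y)=U_0(\omega_U\cdot x,y)$, $V(x,y)=V_0(\omega_V\cdot x,y)$; evaluating at~$y=0$ and setting~$u_0(t):=U_0(t,0)$, $v_0(t):=V_0(t,0)$, $\omega_u:=\omega_U$, $\omega_v:=\omega_V$ establishes the first assertion.

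For the additional coincidence~$\omega_u=\omega_v$, I would not invoke Corollary~\ref{cor1} but rather Theorems~\ref{TMon} and~\ref{T1} directly. The proof of Theorem~\ref{T2} is carried out by choosing a family of Lipschitz capacitors~$\varphi_R$ whose weighted Dirichlet integrals on the right-hand side of~\eqref{geom} tend to zero as $R\to\infty$; since the left-hand sides of Theorems~\ref{TMon}/\ref{T1} also tend to zero in this limit, the excess boundary term involving~$F_{12}$ must vanish identically. Under~\eqref{monF12}, strict negativity of~$F_{12}$ on the open set~$\Omega$ forces
\[
\sqrt{-V_{x_2}/U_{x_2}}\,\nabla_x U+\sqrt{U_{x_2}/(-V_{x_2})}\,\nabla_x V=0
\]
on a non-empty open subset of~$\partial\R^3_+$, equivalently $U_{x_2}\nabla_x V=V_{x_2}\nabla_x U$ there. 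Under~\eqref{stabF12}, strict positivity of~$F_{12}$ on the open preimage of~$I_u\times I_v$ forces $|\nabla_x U|\,|\nabla_x V|=\nabla_x U\cdot\nabla_x V$, so~$\nabla_x U$ and~$\nabla_x V$ are non-negatively proportional. Since the first part of the theorem already gives $\nabla u=u_0'(\omega_u\cdot x)\omega_u$ and $\nabla v=v_0'(\omega_v\cdot x)\omega_v$, parallelism on an open set where $u_0' v_0'\ne 0$ yields $\omega_u=\pm\omega_v$, and the sign is fixed to~$+$ either by the monotonicity in~$x_2$ (case~\eqref{monF12}) or by the non-negative inner product (case~\eqref{stabF12}).

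The main obstacle I anticipate is the propagation step in the strict case: one must ensure that the boundary region where~$F_{12}$ has the strict sign truly meets~$\mathcal{R}^{3}_U\cap\mathcal{R}^{3}_V$ on a non-empty open subset, so that the pointwise vanishing above gives genuine parallelism rather than the trivial~$0=0$. In case~\eqref{monF12} this follows from the strict monotonicity~$u_{x_2}>0>v_{x_2}$, which forces~$u_0',v_0'\ne 0$ everywhere on the profiles' ranges; in case~\eqref{stabF12} one uses that~$(I_u\times I_v)\cap\Im(u,v)\ne\varnothing$ together with continuity of~$(u,v)$ to produce an open preimage and then restricts further to ensure~$u_0'v_0'\ne 0$ on a sub-open set. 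Justifying these non-degeneracies, and verifying that the capacitor argument behind Theorem~\ref{T2} really drives every piece of Theorems~\ref{TMon}/\ref{T1} to zero in the limit, is where the main care is needed.
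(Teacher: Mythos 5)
Your reduction to Theorem~\ref{T2} via the Poisson-kernel extension, the verification of \eqref{cond} and of the gradient bounds, the passage to $y=0$, and the treatment of the monotone case of the ``moreover'' part (capacitor $\varphi_R$, vanishing of the $F_{12}$--term, parallelism of the nonvanishing gradients, $\omega_u=\pm\omega_v$) all follow essentially the same route as the paper. The genuine gap is in the stable case \eqref{stabF12}. There the identity $|\nabla_x U|\,|\nabla_x V|=\nabla_x U\cdot\nabla_x V$ obtained from Theorem~\ref{T1} is vacuous at any point where one of the two gradients vanishes, and your proposed fix --- take the open preimage of $I_u\times I_v$ and ``restrict further to ensure $u_0'v_0'\ne 0$ on a sub-open set'' --- is not justified and is false as an unconditional claim: since $u=u_0(\omega_u\cdot x)$ and $v=v_0(\omega_v\cdot x)$, the preimage of $I_u\times I_v$ may be entirely contained in a plateau of $u_0$ (or of $v_0$), so that one gradient vanishes identically on it; also the case in which $u$ or $v$ is constant must be disposed of separately. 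You flag this as the main obstacle, but you do not supply the idea that actually resolves it.

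That missing idea, which is the heart of the paper's argument, is a contradiction scheme exploiting transversality: first reduce to the case where both $u$ and $v$ are non-constant (if one is constant its direction may be chosen at will), then assume $\omega_u\ne\pm\omega_v$. Using non-constancy and the Fundamental Theorem of Calculus one finds a line $\pi_u$ orthogonal to $\omega_u$ on which $u$ takes a value in $I_u$ and $\nabla u$ is a fixed nonzero vector, and similarly a line $\pi_v$ for $v$; precisely because $\omega_u\ne\pm\omega_v$, these lines intersect in $\R^2$, and any intersection point $x_\sharp$ satisfies $u(x_\sharp)\in I_u$, $v(x_\sharp)\in I_v$, $\nabla u(x_\sharp)\ne0$, $\nabla v(x_\sharp)\ne0$. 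Continuity then gives an open neighborhood on which $F_{12}(u,v)>0$ and both gradients are nonzero, whence the identity forces the unit gradients to coincide and $\omega_u\cdot\omega_v=\pm1$, contradicting the assumption. Without invoking the contradiction hypothesis you cannot produce such a point, so your outline does not close. A further small imprecision: the sign is not ``fixed to $+$'' by monotonicity or by the non-negative inner product; one only obtains $\omega_u=\pm\omega_v$, and the conclusion $\omega_u=\omega_v$ is then reached, as in the paper, by replacing $v_0(t)$ with $v_0(-t)$ when $\omega_u=-\omega_v$.
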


Recently, the preprint~\cite{TVZ} considered the particular case 
of the square root of the Laplacian for the specific potential~$F(u,v)=-u^2v^2$, 
showing, among other things, that solutions with some growth at infinity 
(in particular, bounded solutions) are necessarily constant. 

Our results apply to a more general setting, in which, in general, it is not true that 
bounded solutions are constant, even if they depend only on one variable. 
For instance, our results comprise, as a particular case, 
the uncoupled system of fractional phase transition problems of Allen-Cahn type 
(see~\cite{CSir2, SV}), which possesses heteroclinic solutions. 

On the other hand, these methods may be used, in some circumstances, under some energy 
growth assumptions or some control of the geometric features of the ambient space, 
to prove that a special class of solutions reduces to the constants, 
see~\cite{FSirV, FMV, FV}.

\section{Regularity theory for the systems \eqref{systfrac} and \eqref{systext}}\label{sec:reg}
In this section we prove some regularity results that we will need in the sequel. 
We borrow some ideas from \cite{CSir}.

\begin{lemma}\label{lem:reg}
Let~$(u,v)$ be a bounded weak solution of~\eqref{systfrac} 
and assume that~$F\in C^{1,1}_{loc}(\R^2)$. 
Then~$u\in C^{1,\beta_1}(\R^n)$ and~$v\in C^{1,\beta_2}(\R^n)$, 
for some~$0<\beta_1,\beta_2<1$ depending on~$n,s_1,s_2$ (possibly equal). 
\end{lemma}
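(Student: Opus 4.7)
The plan is a standard bootstrap using interior regularity estimates for the fractional Laplacian, of the type developed in \cite{CSir}, combined with the fact that composition with a $C^{1,1}_{loc}$ function preserves boundedness and Hölder continuity. First, since $u,v\in L^\infty(\R^n)$ and $F\in C^{1,1}_{loc}(\R^2)$, the derivatives $F_1$ and $F_2$ are locally Lipschitz on $\R^2$, so $F_1(u(\cdot),v(\cdot))$ and $F_2(u(\cdot),v(\cdot))$ lie in $L^\infty(\R^n)$. Hence each equation in~\eqref{systfrac} has the form $(-\Delta)^{s_i} w = g_i$ with $w,g_i\in L^\infty$.

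Next I would invoke the $L^\infty\to C^{0,\gamma}$ interior estimate for the fractional Laplacian (see \cite{CSir} as well as the references therein): a bounded distributional solution of $(-\Delta)^s w=g$ with $g\in L^\infty$ belongs to $C^{0,\gamma}_{loc}(\R^n)$ for every $\gamma<\min(2s,1)$, with an estimate of the form
\begin{equation*}
\|w\|_{C^{0,\gamma}(B_{1/2})}\le C\bigl(\|w\|_{L^\infty(\R^n)}+\|g\|_{L^\infty(\R^n)}\bigr).
\end{equation*}
Applied to $u$ and $v$ separately this gives $u\in C^{0,\gamma_1}_{loc}(\R^n)$, $v\in C^{0,\gamma_2}_{loc}(\R^n)$ for some positive $\gamma_1,\gamma_2$.

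Then I would iterate. Because $F_1,F_2$ are locally Lipschitz and $u,v$ are bounded, if $u,v$ are of class $C^{0,\alpha}_{loc}$ then so are $F_1(u,v)$ and $F_2(u,v)$ with the same exponent. Plugging this into the Schauder-type estimate for $(-\Delta)^s$, namely that $g\in C^{0,\alpha}_{loc}$ and $w\in L^\infty$ imply $w\in C^{0,\alpha+2s}_{loc}$ (or $w\in C^{1,\alpha+2s-1}_{loc}$ once $\alpha+2s>1$), again available in \cite{CSir}, one gains a definite amount of regularity at each step. After finitely many iterations the Hölder exponents of $u$ and $v$ cross $1$ and we obtain $u\in C^{1,\beta_1}(\R^n)$ and $v\in C^{1,\beta_2}(\R^n)$ for some $\beta_1,\beta_2\in(0,1)$ depending only on $n,s_1,s_2$.

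The one delicate point is handling the iteration through the threshold $\alpha+2s_i=1$, where the natural Schauder gain would land exactly on an integer Hölder exponent; this is dealt with in the standard way by slightly decreasing the exponent at that step (replacing $\alpha$ by $\alpha-\varepsilon$) so that $\alpha+2s_i\neq 1$, at the cost of an arbitrarily small loss that is absorbed in the next iteration. Once this is taken care of, the bootstrap closes in a fixed number of steps and the proof is complete.
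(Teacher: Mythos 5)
Your proposal is correct and follows essentially the same bootstrap as the paper: boundedness of $F_1(u,v),F_2(u,v)$ gives an initial Hölder exponent via the $L^\infty$ estimate, and then the Schauder-type estimates for $(-\Delta)^{s_i}$ are iterated (taking the minimum exponent of the pair at each step) until the exponent passes $1$; the paper implements exactly this using Propositions 2.9 and 2.8 of Silvestre's paper \cite{Si}, whereas you cite \cite{CSir}, but the estimates invoked are the same in substance. Your remark about avoiding the borderline case $\alpha+2s_i=1$ by a small loss of exponent, and the fact that the local estimates upgrade to global ones by translation invariance and global boundedness, are the right way to make the paper's iteration fully precise.
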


\begin{proof} 
Suppose that~$s_1\leq s_2$. 
Since~$u$ and~$v$ are bounded and~$F_1, F_2$ are~$C^{0,1}_{loc}(\R^2)$, $F_1(u,v)$ 
and~$F_2(u,v)$ are also bounded. 
Therefore, we can apply Proposition 2.9 in~\cite{Si} to obtain that 
\begin{itemize}
\item[1)] if~$2s_1>1$, then~$u\in C^{1,\alpha}(\R^n)$ for any~$\alpha<2s_1-1$,
\item[2)] if~$2s_1\leq 1$, then~$u\in C^{0,\alpha}(\R^n)$ for any~$\alpha<2s_1$,
\end{itemize}
and 
\begin{itemize}
\item[1)'] if~$2s_2>1$, then~$v\in C^{1,\alpha}(\R^n)$ for any~$\alpha<2s_2-1$,
\item[2)'] if~$2s_2\leq 1$, then~$v\in C^{0,\alpha}(\R^n)$ for any~$\alpha<2s_2$.
\end{itemize} 

Hence, if~$1)$ and~$1)'$ hold, we have the thesis. 

If~$2)$ and~$1)'$ (respectively~$2)'$) hold, then, for any~$\alpha<\min\left\lbrace 2s_1, 2s_2-1\right\rbrace$ (respectively~$\alpha<2s_1$),~$u\in C^{0, \alpha}(\R^n)$  and~$v\in C^{1,\alpha}(\R^n)$ 
(respectively~$v\in C^{0,\alpha}(\R^n)$). 
Therefore, also~$F_1(u,v)$ and~$F_2(u,v)$ are in~$C^{0,\alpha}(\R^n)$. 

Now, we can apply Proposition 2.8 in~\cite{Si} to obtain that 
\begin{itemize}
\item[i)] if~$\alpha+2s_1>1$, then~$u\in C^{1,\alpha+2s_1-1}(\R^n)$,
\item[ii)] if~$\alpha+2s_1\leq 1$, then~$u\in C^{0,\alpha+2s_1}(\R^n)$,
\end{itemize}
and 
\begin{itemize}
\item[i)'] if~$\alpha+2s_2>1$, then~$v\in C^{1,\alpha+2s_2-1}(\R^n)$,
\item[ii)'] if~$\alpha+2s_2\leq 1$, then~$v\in C^{0,\alpha+2s_2}(\R^n)$.
\end{itemize} 

Hence, if~$i)$ and~$i)'$ hold, we have the thesis. 

Whereas, if~$ii)$ and~$i)'$ (respectively~$ii)'$) hold, 
then~$u\in C^{0,\alpha+2s_1}(\R^n)$ 
and~$v\in C^{1,\alpha+2s_2-1}(\R^n)\subset C^{0,\alpha+2s_2-1}(\R^n)$ 
(respectively~$v\in C^{0,\alpha+2s_2}(\R^n)$), 
which implies that~$u,v\in C^{0,\beta}(\R^n)$, 
with~$\beta=\min\left\lbrace \alpha+2s_1, \alpha+2s_2-1\right\rbrace$ 
(respectively~$\beta=\alpha+2s_1$). 
Indeed, suppose for instance that~$\min\left\lbrace \alpha+2s_1, \alpha+2s_2-1\right\rbrace=\alpha+2s_1$, 
then~$v\in C^{0, \alpha+2s_1}_{loc}(\R^n)$ from the classical embedding of H\"older spaces, which actually means that~$v\in C^{0, \alpha+2s_1}(\R^n)$ since~$v$ is bounded; 
in the same way one proves also the other cases. 

Therefore,~$F_1(u,v)$ and~$F_2(u,v)$ are~$C^{0,\beta}(\R^n)$, 
and we can apply again Proposition 2.8 in~\cite{Si}.

Hence, in a finite number of steps, we will end up with~$\alpha+2ks_1>1$ or~$\alpha+k(2s_2-1)>1$ or~$\alpha+k(2s_1+2s_2-1)>1$ for some integer~$k$. 
This gives the thesis. 
\end{proof}

Now, we recall the following result from~\cite{CS}:
\begin{lemma}\label{lemP}
The function 
\begin{equation}\label{poisson}
P_{\alpha}(x,y) = C_{n,\alpha}\, \frac{y^{1-\alpha}}{(|x|^2+y^2)^{\frac{n+1-\alpha}{2}}}, \quad (x,y)\in \R^n\times (0,+\infty)
\end{equation} 
is a solution of 
\begin{eqnarray}\label{eqpoisson}
\left\{ 
\begin{array}{ll} 
-div\left( y^{\alpha} \nabla P_{\alpha}\right)=0 \qquad & \mathrm{on\ } \R^n\times(0,+\infty),  \\
P_{\alpha}=\delta_0  \qquad & \mathrm{on\ }  \R^n\times\left\lbrace 0\right\rbrace , 
 \end{array} 
\right.         
\end{eqnarray}
where~$\alpha\in (-1,1)$ and~$C_{n,\alpha}$ is a normalizing constant such that 
$$ \int_{\R^n} P_{\alpha}(x,y) \, dx=1. $$ 
\end{lemma}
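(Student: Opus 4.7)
The plan is to establish the two assertions of the lemma separately: first the degenerate elliptic equation in the open half-space by a direct (but careful) differentiation, and then the distributional Dirac limit at the boundary via the built-in dilation symmetry of $P_{\alpha}$ plus a standard approximation-to-identity argument.

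For the interior PDE, I would set $r:=\sqrt{|x|^{2}+y^{2}}$ and $\gamma:=n+1-\alpha$, so that $P_{\alpha}=C_{n,\alpha}\,y^{1-\alpha}r^{-\gamma}$, and expand the equation as
\[
\operatorname{div}\bigl(y^{\alpha}\nabla P_{\alpha}\bigr)=\partial_{y}\bigl(y^{\alpha}\partial_{y}P_{\alpha}\bigr)+y^{\alpha}\Delta_{x}P_{\alpha}.
\]
A direct computation gives $y^{\alpha}\partial_{i}P_{\alpha}=-C_{n,\alpha}\gamma\,y\,x_{i}\,r^{-\gamma-2}$, hence $y^{\alpha}\Delta_{x}P_{\alpha}=-C_{n,\alpha}\gamma\,y\,r^{-\gamma-4}\bigl[n\,r^{2}-(\gamma+2)|x|^{2}\bigr]$. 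Analogously, $y^{\alpha}\partial_{y}P_{\alpha}=C_{n,\alpha}\bigl[(1-\alpha)r^{-\gamma}-\gamma\,y^{2}r^{-\gamma-2}\bigr]$, and differentiating in $y$ yields $\partial_{y}(y^{\alpha}\partial_{y}P_{\alpha})=C_{n,\alpha}\gamma\,y\,r^{-\gamma-4}\bigl[-(3-\alpha)r^{2}+(\gamma+2)y^{2}\bigr]$. Using the identity $r^{2}=|x|^{2}+y^{2}$ and the algebraic relations $\gamma+2-(3-\alpha)=n$ and $(\gamma+2)-n=3-\alpha$, both sums collapse to expressions proportional to $y\,r^{-\gamma-4}\bigl[(3-\alpha)|x|^{2}-n\,y^{2}\bigr]$ with opposite signs, so that they cancel exactly and $\operatorname{div}(y^{\alpha}\nabla P_{\alpha})=0$ on $\R^{n}\times(0,+\infty)$.

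For the boundary behavior, I would exploit the dilation symmetry: writing $\Phi(z):=C_{n,\alpha}(1+|z|^{2})^{-(n+1-\alpha)/2}$ one has $P_{\alpha}(x,y)=y^{-n}\Phi(x/y)$. The substitution $x=yz$ then gives
\[
\int_{\R^{n}}P_{\alpha}(x,y)\,dx=\int_{\R^{n}}\Phi(z)\,dz,
\]
which is finite because $n+1-\alpha>n$ (here the assumption $\alpha<1$ is essential); one sets $C_{n,\alpha}$ to be the reciprocal of this integral, so that the total mass equals $1$ for every $y>0$. The same substitution applied to $\int P_{\alpha}(x,y)\phi(x)\,dx=\int\Phi(z)\phi(yz)\,dz$ for any $\phi\in C_{c}(\R^{n})$, combined with dominated convergence (using that $\phi$ is bounded and $\Phi\in L^{1}(\R^{n})$), gives convergence to $\phi(0)$ as $y\to 0^{+}$. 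This is exactly the distributional identity $\lim_{y\to 0^{+}}P_{\alpha}(\,\cdot\,,y)=\delta_{0}$.

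The only genuine subtlety is the bookkeeping in the PDE verification: one must carry through the powers of $r$ and $y$, and the two multiplicative constants $\gamma+2$ and $3-\alpha$, in precisely the combinations that produce cancellation. No deeper ingredient is needed, and the range $\alpha\in(-1,1)$ enters only through $\alpha<1$ (to ensure integrability of $P_{\alpha}$ in $x$, hence the well-posed normalization) and through $\alpha>-1$ implicitly in making the weight $y^{\alpha}$ locally integrable on $\overline{\R^{n+1}_{+}}$ and the conclusion meaningful in the weak formulation.
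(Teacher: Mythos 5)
Your proposal is correct: I checked the differentiation and the two pieces do cancel exactly as you claim, since $\partial_y\bigl(y^{\alpha}\partial_y P_{\alpha}\bigr)=C_{n,\alpha}\gamma\,y\,r^{-\gamma-4}\bigl[n\,y^{2}-(3-\alpha)|x|^{2}\bigr]$ and $y^{\alpha}\Delta_x P_{\alpha}=C_{n,\alpha}\gamma\,y\,r^{-\gamma-4}\bigl[(3-\alpha)|x|^{2}-n\,y^{2}\bigr]$, and the scaling identity $P_{\alpha}(x,y)=y^{-n}\Phi(x/y)$ with $\Phi\in L^{1}(\R^{n})$ (which needs exactly $\alpha<1$) gives the constant-in-$y$ mass and the approximation-to-identity limit $\delta_{0}$ by dominated convergence. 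Note, however, that the paper does not prove this lemma at all: it simply recalls it from the Caffarelli--Silvestre extension paper \cite{CS}, so there is no internal argument to compare against. Your computation is essentially the standard verification one would find in that reference (and it is the same rescaled kernel $y^{-n}Q_{\alpha}(x/y)$ that the paper re-derives later in the proof of Lemma~\ref{lem:regUV}), so what your write-up buys is a self-contained check, including the correct observation that the interior equation holds for every $\alpha$ and that only $\alpha<1$ is needed for the normalization and the boundary limit, while $\alpha>-1$ matters for the weighted weak formulation used elsewhere in the paper rather than for the lemma itself.
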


Now, if~$u,v$ are bounded solutions to~\eqref{systfrac}, 
we consider the functions
\begin{equation}\label{extU}
U(x,y)=\int_{\R^n}P_{\alpha_1}(x-z,y)u(z)\, dz = \int_{\R^n}P_{\alpha_1}(\zeta,y)u(x-\zeta)\, d\zeta,
\end{equation}
\begin{equation}\label{extV}
V(x,y)=\int_{\R^n}P_{\alpha_2}(x-z,y)v(z)\, dz = \int_{\R^n}P_{\alpha_2}(\zeta,y)v(x-\zeta)\, d\zeta,
\end{equation}
where~$\alpha_1=1-2s_1$ and~$\alpha_2=1-2s_2$. 
We observe that~$U,V$ 
are bounded in~$\R^{n+1}_+$, because~$u,v$ are bounded in~$\R^n$  and~$P_{\alpha_i}(x,\cdot)\in L^1(\R^n)$ for~$i=1,2$. 

\begin{lemma}\label{lem:regUV}
Let~$(u,v)$ be a bounded weak solution of~\eqref{systfrac} 
and assume that~$F\in C^{1,1}_{loc}(\R^2)$. Let~$U,V$ be the functions defined in~\eqref{extU} and~\eqref{extV} respectively, then 
$$ \|U\|_{C^{0,\beta_1}(\overline{\R^{n+1}_+})} + \|\nabla_x U\|_{C^{0,\beta_1}(\overline{\R^{n+1}_+})} \leq C_1 $$
and 
$$ \|V\|_{C^{0,\beta_2}(\overline{\R^{n+1}_+})} + \|\nabla_x V\|_{C^{0,\beta_2}(\overline{\R^{n+1}_+})} \leq C_2 $$
for some positive constants 
$$C_1=C_1(n,s_1,s_2,\|F_1\|_{C^{0,1}(\R^2)},\|u\|_{L^{\infty}(\R^n)})$$ 
and 
$$C_2=C_2(n,s_1,s_2,\|F_2\|_{C^{0,1}(\R^2)},\|v\|_{L^{\infty}(\R^n)}).$$ 
\end{lemma}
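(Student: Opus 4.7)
The plan is to combine Lemma~\ref{lem:reg}, which gives $u\in C^{1,\beta_1}(\R^n)$ and $v\in C^{1,\beta_2}(\R^n)$ with norms controlled by $n$, $s_1$, $s_2$, $\|F_i\|_{C^{0,1}(\R^2)}$ and $\|u\|_{L^\infty}$, $\|v\|_{L^\infty}$, with direct estimates on the Poisson extensions~\eqref{extU}--\eqref{extV}, using only the explicit expression of $P_{\alpha_i}$ from \eqref{poisson} and its probability-density normalisation $\int_{\R^n}P_{\alpha_i}(\zeta,y)\,d\zeta=1$ from Lemma~\ref{lemP}. It suffices to treat $U$; the argument for $V$ is identical upon replacing $(\alpha_1,s_1,\beta_1,u,F_1)$ by $(\alpha_2,s_2,\beta_2,v,F_2)$.

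First I would collect the easy bounds. The $L^\infty$ estimate $\|U\|_{L^\infty(\R^{n+1}_+)}\le\|u\|_{L^\infty(\R^n)}$ is immediate from $P_{\alpha_1}\ge 0$ and the mass-$1$ property. Differentiating \eqref{extU} in $x$ under the integral sign (justified by $\nabla u\in L^\infty$ from Lemma~\ref{lem:reg}) gives $\nabla_x U(x,y)=\int_{\R^n}P_{\alpha_1}(\zeta,y)\,\nabla u(x-\zeta)\,d\zeta$, hence the analogous bound on $\|\nabla_x U\|_\infty$. Since convolving a $C^{0,\beta_1}$ function with a probability kernel preserves its H\"older seminorm in the convolution variable, one obtains, uniformly in $y\ge 0$, $[U(\cdot,y)]_{C^{0,\beta_1}(\R^n)}\le [u]_{C^{0,\beta_1}(\R^n)}$ and $[\nabla_x U(\cdot,y)]_{C^{0,\beta_1}(\R^n)}\le [\nabla u]_{C^{0,\beta_1}(\R^n)}$, which already controls the tangential H\"older regularity up to the boundary.

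The core of the proof is then the H\"older estimate in the $y$-variable up to $\{y=0\}$. Starting from $U(x,y)-u(x)=\int P_{\alpha_1}(\zeta,y)[u(x-\zeta)-u(x)]\,d\zeta$ and performing the scaling $\zeta=y\eta$, the integral reduces to $y^{\beta_1}\,[u]_{C^{0,\beta_1}}\int_{\R^n}(1+|\eta|^2)^{-(n+1-\alpha_1)/2}|\eta|^{\beta_1}\,d\eta$, which is finite precisely when $\beta_1<1-\alpha_1=2s_1$; this is the only genuine constraint of the proof, and it is harmless because we are free to decrease $\beta_1$ from the value produced by Lemma~\ref{lem:reg}. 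The same scaling applied to $\nabla u$ yields $|\nabla_x U(x,y)-\nabla u(x)|\le Cy^{\beta_1}$. To upgrade these boundary-decay estimates into a true H\"older bound $|U(x,y_1)-U(x,y_2)|\le C|y_1-y_2|^{\beta_1}$, I would differentiate the Poisson formula in $y$, exploit $\int \partial_y P_{\alpha_1}(\zeta,y)\,d\zeta=0$, and repeat the scaling to obtain the interior derivative bound $|\partial_y U(x,y)|\le Cy^{\beta_1-1}$. A case split $y_2\le y_1/2$ versus $y_2>y_1/2$ (triangle inequality against the boundary trace in the first case; integration of $\partial_y U$ along a vertical segment in the second) then closes the argument, and the corresponding estimates for $\nabla_x U$ follow identically. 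The main obstacle is exactly this matching of the boundary-decay estimate with the interior derivative estimate at the scale $y\sim|y_1-y_2|$; once the exponent constraint $\beta_1<2s_1$ is respected, the rest is one-line scaling.
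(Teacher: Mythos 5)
Your proposal is correct and rests on the same two ingredients as the paper's proof: the $C^{1,\beta_i}$ regularity of $(u,v)$ supplied by Lemma~\ref{lem:reg} and the scaling structure $P_{\alpha_1}(\zeta,y)=C_{n,\alpha_1}\,y^{-n}Q_{\alpha_1}(\zeta/y)$ of the Poisson kernel \eqref{poisson}. The execution differs in how the vertical increments are handled: the paper writes $U(x,y)=C_{n,\alpha_1}\int_{\R^n}Q_{\alpha_1}(z)\,u(x-yz)\,dz$ and gets the joint H\"older bound in one stroke from $|u(x_1-y_1z)-u(x_2-y_2z)|\le [u]_{C^{0,\beta_1}}\left(|x_1-x_2|^{\beta_1}+|y_1-y_2|^{\beta_1}|z|^{\beta_1}\right)$ integrated against $Q_{\alpha_1}$, whereas you split into tangential increments (convolution with a probability kernel preserves the H\"older seminorm) and vertical increments, for which you prove the boundary decay $|U(x,y)-u(x)|\le Cy^{\beta_1}$ and the interior bound $|\partial_y U(x,y)|\le Cy^{\beta_1-1}$ and then interpolate. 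Your route is longer and needs the extra justifications that $\int_{\R^n}\partial_yP_{\alpha_1}(\zeta,y)\,d\zeta=0$ and that $|\nabla Q_{\alpha_1}(\eta)|\,|\eta|^{1+\beta_1}$ is integrable, but it reaches the same conclusion; in fact the derivative bound alone closes the vertical estimate, since $\int_{y_2}^{y_1}t^{\beta_1-1}\,dt\le \beta_1^{-1}(y_1-y_2)^{\beta_1}$, so your case split is not even needed. A merit of your write-up is that it makes explicit the moment condition $\beta_1<1-\alpha_1=2s_1$ ensuring $\int_{\R^n}|z|^{\beta_1}Q_{\alpha_1}(z)\,dz<\infty$, which the paper uses tacitly (and which, as you observe, can always be met by decreasing $\beta_1$); note also that in the paper's displayed chain the factor $\|u\|_{L^{\infty}(\R^n)}$ should really be the $C^{0,\beta_1}$ (respectively, for the gradient estimate, the $C^{1,\beta_1}$) norm of $u$ furnished by Lemma~\ref{lem:reg}, exactly the quantity your argument invokes.
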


\begin{proof} 
We first notice that we can rewrite the Poisson kernel in~\eqref{poisson} as 
\begin{eqnarray*}
P_{\alpha}(x,y) &=& C_{n,\alpha}\frac{y^{1-\alpha}}{\left(|x|^2+y^2\right)^{\frac{n+1-\alpha}{2}}} \\ 
&=& C_{n,\alpha} \frac{y^{1-\alpha}}{y^{n+1-\alpha}\left(\frac{|x|^2}{y^2}+1\right)^{\frac{n+1-\alpha}{2}}} \\ &=& C_{n,\alpha} \frac{y^{-n}}{\left(\frac{|x|^2}{y^2}+1\right)^{\frac{n+1-\alpha}{2}}} \\ &=& C_{n,\alpha} \frac{1}{y^n} Q_{\alpha}\left(\frac{x}{y}\right),
\end{eqnarray*}
where 
$$ Q_{\alpha}(z):= \frac{1}{\left(|z|^2+1\right)^{\frac{n+1-\alpha}{2}}}. $$

Hence, we can write~$U$ as 
\begin{eqnarray*}
U(x,y) &=& \int_{\R^n} P_{\alpha_1}(\zeta,y)u(x-\zeta)\, d\zeta \\
&=& C_{n,\alpha_1}\int_{\R^n}\frac{1}{y^n}Q_{\alpha_1}\left(\frac{\zeta}{y}\right) u(x-\zeta)\, d\zeta \\ &=& C_{n,\alpha_1} \int_{\R^n}Q_{\alpha_1}(z)u(x-yz)\, dz,
\end{eqnarray*}
by the change of variable~$\frac{\zeta}{y}=z$. 

Hence, 
\begin{eqnarray*}
&& |U(x_1,y_1)-U(x_2,y_2)|
\\ &\leq& C_{n,\alpha_1}\int_{\R^n}|u(x_1-y_1z)-u(x_2-y_2z)| 
Q_{\alpha_1}(z) \, dz \\
&\leq& C\|u\|_{L^{\infty}(\R^n)}\int_{\R^n}|x_1-y_1z-x_2-y_2z|^{\beta_1}Q_{\alpha_1}(z)\, dz \\ &\leq& C\|u\|_{L^{\infty}(\R^n)}\left(|x_1-x_2|^{\beta_1}+ |y_1-y_2|^{\beta_1}\int_{\R^n} |z|^{\beta_1}Q_{\alpha_1}(z)\, dz\right).
\end{eqnarray*}
Applying this fact also to~$U_{x_j}$ for any~$j=1,\ldots,n$ we get the conclusion for~$U$. 

In the same way we obtain the estimates for~$V$ and this concludes the proof. 
\end{proof}

The following result concerns a bound for solutions of problem~\eqref{systext}. 
\begin{prop}\label{prop:reg}
Let~$(u,v)$ be a bounded weak solution of~\eqref{systfrac} 
and assume that $F\in C^{1,1}_{loc}(\R^2)$. Let~$U,V$ be the functions defined 
in~\eqref{extU} and~\eqref{extV}. 

Then, given~$R>0$, there exists~$C>0$, depending on $R$, such that 
\begin{equation}\label{gradLinf} 
\|\nabla_x U\|_{L^{\infty}(\R^n\times(0,R))} +\|\nabla_x V\|_{L^{\infty}(\R^n\times(0,R))} \leq C. 
\end{equation}
\end{prop}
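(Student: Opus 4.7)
The plan is to reduce the bound on $\nabla_x U$ and $\nabla_x V$ to the global gradient bound on $u$ and $v$ already furnished by Lemma~\ref{lem:reg}. Indeed, since $(u,v)$ is a bounded solution of \eqref{systfrac} with $F\in C^{1,1}_{loc}(\R^2)$, Lemma~\ref{lem:reg} yields $u\in C^{1,\beta_1}(\R^n)$ and $v\in C^{1,\beta_2}(\R^n)$; in particular, $\|\nabla u\|_{L^{\infty}(\R^n)}$ and $\|\nabla v\|_{L^{\infty}(\R^n)}$ are finite. Combined with Lemma~\ref{lemP}, which tells us that $P_{\alpha_i}(\cdot,y)$ is an $L^1$ probability density for every $y>0$, this turns \eqref{gradLinf} into an application of Young's convolution inequality.

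Starting from \eqref{extU} in the convolution form $U(x,y)=\int_{\R^n}P_{\alpha_1}(\zeta,y)\,u(x-\zeta)\,d\zeta$, I would differentiate under the integral sign in the $x$-variables. The justification is a direct dominated convergence argument: the difference quotients $h^{-1}\bigl(u(x+he_j-\zeta)-u(x-\zeta)\bigr)$ converge pointwise to $u_{x_j}(x-\zeta)$ and are dominated, in absolute value, by $\|\nabla u\|_{L^{\infty}(\R^n)}$, so that $\|\nabla u\|_{L^{\infty}(\R^n)}\,P_{\alpha_1}(\zeta,y)$ is an integrable majorant. This yields
$$\nabla_x U(x,y)=\int_{\R^n}P_{\alpha_1}(\zeta,y)\,\nabla u(x-\zeta)\,d\zeta.$$
Young's inequality together with the normalization $\int_{\R^n}P_{\alpha_1}(\zeta,y)\,d\zeta=1$ from Lemma~\ref{lemP} then gives $\|\nabla_x U(\cdot,y)\|_{L^{\infty}(\R^n)}\le\|\nabla u\|_{L^{\infty}(\R^n)}$ for every $y>0$, and the same argument with $\alpha_2$ in place of $\alpha_1$ gives $\|\nabla_x V(\cdot,y)\|_{L^{\infty}(\R^n)}\le\|\nabla v\|_{L^{\infty}(\R^n)}$.

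Taking the supremum in $y\in(0,R)$ and adding the two estimates produces \eqref{gradLinf}, with constant $C=\|\nabla u\|_{L^{\infty}(\R^n)}+\|\nabla v\|_{L^{\infty}(\R^n)}$ controlled in terms of the data entering Lemma~\ref{lem:reg}; in fact this bound is independent of $R$. The only substantive step in the whole argument is the fractional bootstrap already carried out in Lemma~\ref{lem:reg}: once $u,v$ are known to have a globally bounded gradient on $\R^n$, passing to the Poisson extensions $U,V$ can neither increase the sup norm of the tangential gradient nor destroy the bound, so no new technical obstacle appears at this stage.
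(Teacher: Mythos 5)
Your argument is correct. Lemma~\ref{lem:reg} does give a global bound on $\nabla u$ and $\nabla v$ (the spaces there are $C^{1,\beta_i}(\R^n)$, not merely local), the dominated-convergence justification for differentiating \eqref{extU}--\eqref{extV} under the integral sign in $x$ is sound because the difference quotients are dominated by $\|\nabla u\|_{L^\infty(\R^n)}P_{\alpha_1}(\zeta,y)$, and the normalization $\int_{\R^n}P_{\alpha_i}(\zeta,y)\,d\zeta=1$ from Lemma~\ref{lemP} then yields $\|\nabla_x U(\cdot,y)\|_{L^\infty(\R^n)}\le\|\nabla u\|_{L^\infty(\R^n)}$ for every $y>0$, and likewise for $V$. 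Your route is, however, not the paper's: the paper deduces \eqref{gradLinf} by invoking Lemma~\ref{lem:regUV} (which encodes essentially the same convolution computation, written as $\nabla_x U = P_{\alpha_1}(\cdot,y)\ast\nabla u$, together with H\"older bounds) on the strip $\{0\le y\le 1\}$, and then appeals to standard elliptic estimates for the equations in \eqref{systext}, which are nondegenerate away from $\{y=0\}$, to cover $\{1<y<R\}$. Your proof works with the Poisson representation directly on all of $\R^{n+1}_+$, which is more elementary (no splitting of the half-space, no interior elliptic regularity) and in fact gives the slightly stronger conclusion that the constant in \eqref{gradLinf} can be taken independent of $R$, namely $\|\nabla u\|_{L^\infty(\R^n)}+\|\nabla v\|_{L^\infty(\R^n)}$; what the paper's phrasing buys is economy, since Lemma~\ref{lem:regUV} is needed anyway later (continuity of $U,V$ and $\nabla_x U,\nabla_x V$ up to $\{y=0\}$ in the proofs of Theorems~\ref{Tn} and~\ref{T3}), so the proposition becomes a two-line corollary of results already established.
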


\begin{proof}
We notice that, thanks to Lemma~\ref{lem:regUV}, $\nabla_x U$ and~$\nabla_x V$ are bounded, 
for instance, in~$\overline{\R^{n+1}_+}\cap\left\lbrace 0\leq y\leq1\right\rbrace$. 
Whereas, in~$\overline{\R^{n+1}_+}\cap\left\lbrace y>1\right\rbrace $, 
the equations in~\eqref{systext} are 
nondegenerate and therefore standard elliptic arguments imply the gradient bounds.
\end{proof}

\section{Some useful Lemmas}
In this section we prove some lemmas that will be useful in the sequel. 

First of all, we obtain energy estimates needed for the proof of Theorem~\ref{T2}.
\begin{lemma}\label{lem3} 
Let~$(U,V)$ be a weak solution of~\eqref{systext}.

Then, for any~$R>0$ there exists~$C$, possibly depending on~$R$, such that 
\begin{equation}\label{CR}
 \|y^{\alpha_1}|\nabla U|^2\|_{L^1(B_R^+)}\leq C, \qquad 
\|y^{\alpha_2}|\nabla V|^2\|_{L^1(B_R^+)}\leq C. 
\end{equation}

Moreover, if~$n=2$, then there exists~$C_0>0$ such that for every~$R\geq 1$
\begin{equation}\label{condy}
\int_{B_R^+}y^{\alpha_1}|\nabla U|^2\leq C_0 R^2, \qquad \int_{B_R^+}y^{\alpha_2}|\nabla V|^2\leq C_0 R^2. 
\end{equation}
\end{lemma}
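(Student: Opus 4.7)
The plan is to deduce both bounds from a Caccioppoli-type estimate applied to the weak formulation \eqref{weak}. First I would fix $R > 0$ and choose a radial cutoff $\eta \in C^\infty_c(B_{2R})$ with $0 \leq \eta \leq 1$, $\eta \equiv 1$ on $B_R$, and $|\nabla \eta| \leq C/R$. I would then test \eqref{weak} (with $R$ replaced by $2R$) against $\xi_1 := U \eta^2$ and $\xi_2 := V \eta^2$. These are admissible test functions: they are bounded on $B_{2R}^+$ (since $U, V \in L^\infty_{loc}$), they vanish outside $B_{2R}$, they are locally Lipschitz in the interior of $\R^{n+1}_+$ (on $\{y > 0\}$ the equation $\mathrm{div}(y^{\alpha_i}\nabla U) = 0$ is uniformly elliptic with smooth coefficients, so $U$ is smooth there, and similarly for $V$), and they satisfy the integrability \eqref{xi} because $y^{\alpha_i}|\nabla U|^2, y^{\alpha_i}|\nabla V|^2 \in L^1(B_{2R}^+)$ directly from the definition of weak solution.

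Expanding $\nabla(U\eta^2) = \eta^2 \nabla U + 2U\eta\nabla\eta$ and applying Young's inequality to absorb the cross term into the left-hand side yields
\[
\tfrac{1}{2}\int_{B_{2R}^+} y^{\alpha_1} |\nabla U|^2 \eta^2 \,\leq\, 2 \int_{B_{2R}^+} y^{\alpha_1} U^2 |\nabla \eta|^2 \,+\, \int_{\partial\R^{n+1}_+ \cap B_{2R}} |F_1(U,V)|\,|U|\,\eta^2,
\]
together with the analogous estimate for $V$. Since $\eta \equiv 1$ on $B_R$, the left-hand side dominates $\tfrac{1}{2}\int_{B_R^+} y^{\alpha_1}|\nabla U|^2$, giving the key a priori bound.

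For \eqref{CR}, I would use the $L^\infty_{loc}$ bounds on $U$, $V$ (and on $F_1(U,V)$, $F_2(U,V)$, via $F \in C^{1,1}_{loc}$) on $B_{2R}^+$ to dominate both integrals on the right. Combined with the weighted volume estimate $\int_{B_{2R}^+} y^{\alpha_i}\, dx\, dy \leq C R^{n+1+\alpha_i}$ (finite since $\alpha_i > -1$) and the flat-boundary measure $|B_{2R} \cap \{y = 0\}| \leq C R^n$, this produces an $R$-dependent constant, as stated. For \eqref{condy}, I would instead exploit global boundedness of $U$ and $V$, which is what is available in the setting in which the lemma is applied (the Poisson extensions \eqref{extU}--\eqref{extV} of bounded solutions $u, v$ of \eqref{systfrac}). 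Setting $M := \|U\|_{L^\infty(\R^{n+1}_+)} + \|V\|_{L^\infty(\R^{n+1}_+)}$, the volume term on the right is bounded by
\[
4 M^2 \int_{B_{2R}^+} y^{\alpha_i} |\nabla \eta|^2 \,\leq\, C M^2 R^{-2} \cdot R^{n+1+\alpha_i} \,=\, C M^2 R^{n-1+\alpha_i},
\]
which for $n = 2$ and $\alpha_i \in (-1,1)$ is at most $C M^2 R^2$ whenever $R \geq 1$. The boundary term is controlled by $\|F_i(U,V)\|_{L^\infty}\,M\cdot|B_{2R} \cap \{y=0\}| \leq C R^n$, again equal to $C R^2$ for $n = 2$. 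Both contributions are of order $R^2$, and \eqref{condy} follows.

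The only subtlety I expect is the admissibility of $U\eta^2$ as a test function in \eqref{weak}, since the definition asks for local Lipschitz regularity in the interior of $\R^{n+1}_+$. This comes for free from the interior regularity just mentioned, but if one prefers to avoid invoking it, a routine mollification of $U$ (and $V$) in the tangential $x$-variable followed by passage to the limit justifies the substitution with no further regularity assumption beyond \eqref{cond}.
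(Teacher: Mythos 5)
Your proof is correct and follows essentially the same route as the paper: test \eqref{weak} with $U\eta^2$, $V\eta^2$ for a cutoff with $|\nabla\eta|\le C/R$, absorb the cross term by Young's inequality, and bound the remaining weighted volume and boundary integrals, yielding $CR^{n-1+\alpha_i}+CR^n$ and hence $CR^2$ when $n=2$, $R\ge 1$. Your explicit remark that the uniform-in-$R$ constant in \eqref{condy} uses global boundedness of $U,V$ (available in the intended application via the Poisson extension), while \eqref{CR} only needs $L^\infty_{loc}$, is a careful touch the paper leaves implicit, but it is the same argument.
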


\begin{proof} 
We choose a cutoff function~$\varphi\in C^{\infty}_0(B^+_{2R})$, with~$\varphi\geq0$, $\varphi=1$ in~$B^+_R$ and~$|\nabla\varphi|\leq\frac{C}{R}$, 
with~$R\geq1$, 
and we test the weak formulation in~\eqref{weak} with~$\xi_1=U\varphi^2$ and~$\xi_2=V\varphi^2$. 
We notice that, thanks to~\eqref{cond} and the properties of~$\varphi$, 
the condition~\eqref{xi} is satisfied, and then these test functions are admissible. 

Then, from the first equation in~\eqref{weak}, one gets 
$$
\int_{\R^{n+1}_+}y^{\alpha_1}\left(|\nabla U|^2\varphi^2 +2\varphi\nabla U\cdot\nabla\varphi\right) =\int_{\R^n}F_{1}(U,V)U\varphi^2.
$$
Therefore, using the Cauchy-Schwarz inequality, we have
\begin{eqnarray*}
\int_{\R^{n+1}_+}y^{\alpha_1}|\nabla U|^2\varphi^2 &=&  -2\int_{\R^{n+1}_+}y^{\alpha_1}\varphi\nabla U\cdot\nabla\varphi +\int_{\R^n}F_{1}(U,V)U\varphi^2\\ 
&\leq& \frac{1}{2}\int_{\R^{n+1}_+}y^{\alpha_1}|\nabla U|^2\varphi^2 + 2\int_{\R^{n+1}_+}y^{\alpha_1}|\nabla\varphi|^2\\ 
&&\quad + \int_{\R^n} |F_1(U,V)|\, |U| \varphi^2.
\end{eqnarray*}
This and the properties of the cutoff function~$\varphi$ imply that 
\begin{eqnarray*}
\int_{B_R^+}y^{\alpha_1}|\nabla U|^2 &\leq& \frac{C}{R^2}\int_{B_{2R}^+\setminus B_R^+}y^{\alpha_1} + 2\int_{\left\lbrace |x|\leq 2R\right\rbrace \cap\left\lbrace y=0\right\rbrace}|F_1(U,V)| |U|\\
&\leq&\frac{C}{R^2} \int_0^{2R}\int_{\left\lbrace |x|\leq 2R\right\rbrace \cap\left\lbrace y=0\right\rbrace}y^{\alpha_1}\, dx\, dy +C R^n \\
&\leq& CR^{n-1+\alpha_1} +CR^n,
\end{eqnarray*}
which gives~\eqref{CR}, 
and in particular, if~$n=2$ and~$R\geq 1$,~\eqref{condy}.

In the same way we obtain the same estimates for~$V$. 
\end{proof}

Next we obtain a bound for further derivatives in~$x$:
\begin{lemma}\label{lem2} 
Let~$(U,V)$ be a weak solution of~\eqref{systext}.
Suppose that~\eqref{gradLinf} holds. Then, for any~$r>0$, we have that  
$$ y^{\alpha_1}|\nabla U_{x_j}|^2, \, y^{\alpha_2}|\nabla V_{x_j}|^2 \in L^1 (B_r^+). $$
\end{lemma}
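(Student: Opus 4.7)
The plan is a Caccioppoli-type energy estimate for the tangential derivative $W:=U_{x_j}$ (with $j\in\{1,\dots,n\}$), exploiting the fact that both the bulk equation $\mathrm{div}(y^{\alpha_1}\nabla U)=0$ and the weight $y^{\alpha_1}$ are invariant under translations in $x_j$, while by \eqref{gradLinf} the boundary trace $(U(\cdot,0),V(\cdot,0))$ is locally Lipschitz in $x$, so that $F_1(U(\cdot,0),V(\cdot,0))$ is locally Lipschitz as well. An analogous argument handles $V_{x_j}$ with $(\alpha_2,V)$ in place of $(\alpha_1,U)$.

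First I would record, using a standard difference-quotient argument in the $x_j$-direction applied to \eqref{weak}, that $W$ satisfies the weak equation $\mathrm{div}(y^{\alpha_1}\nabla W)=0$ in $\R^{n+1}_+$ with boundary datum $\lim_{y\to 0^+}(-y^{\alpha_1}\partial_y W)=\partial_{x_j}[F_1(U(\cdot,0),V(\cdot,0))]$ on $\partial\R^{n+1}_+$. Then I would fix a cutoff $\varphi\in C^\infty_c(B_{2r})$ with $\varphi\equiv 1$ on $B_r$, $0\leq\varphi\leq 1$ and $|\nabla\varphi|\leq C/r$, and test this equation with $\xi=W\varphi^2$; this is admissible in the sense of \eqref{xi} because $W$ is bounded by \eqref{gradLinf} and $y^{\alpha_1}|\nabla U|^2\in L^1_{\mathrm{loc}}$ by \eqref{cond}. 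Expanding $\nabla(W\varphi^2)$ and absorbing the cross-term via Cauchy--Schwarz yields
$$\tfrac{1}{2}\int_{\R^{n+1}_+} y^{\alpha_1}|\nabla W|^2\varphi^2 \;\leq\; 2\int_{\R^{n+1}_+} y^{\alpha_1}W^2|\nabla\varphi|^2 + \int_{\partial\R^{n+1}_+}\bigl|\partial_{x_j}[F_1(U,V)]\bigr|\,|W|\,\varphi^2.$$
The first integral on the right is finite because $\|W\|_{L^\infty(B_{2r}^+)}\leq C$ by \eqref{gradLinf} and because $y^{\alpha_1}$ is locally integrable (as $\alpha_1\in(-1,1)$); the boundary integral is finite because the Lipschitz constant of $F_1$ on the compact range of $(U(\cdot,0),V(\cdot,0))$ bounds $|\partial_{x_j}[F_1(U,V)]|$ pointwise by a constant multiple of $|U_{x_j}|+|V_{x_j}|$, both of which are bounded by \eqref{gradLinf}. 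This gives the desired conclusion $\int_{B_r^+}y^{\alpha_1}|\nabla U_{x_j}|^2<\infty$.

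The main technical obstacle is that $F\in C^{1,1}_{\mathrm{loc}}$ only yields the existence of $F_{11}$ and $F_{12}$ almost everywhere (cf.\ \eqref{borel}), so the differentiated equation for $W$ is not a priori well-posed in a classical pointwise sense. I would circumvent this by performing the whole Caccioppoli argument directly at the level of the difference quotients $\tau_{h,j}U(x,y):=h^{-1}(U(x+he_j,y)-U(x,y))$: the bulk piece is immediate from the translation invariance of \eqref{weak}, and on the boundary one needs only the local Lipschitz continuity of $F_1$ to bound $|\tau_{h,j}[F_1(U(\cdot,0),V(\cdot,0))]|$ uniformly in $h$ by a constant multiple of $\|\nabla_x U\|_{L^\infty}+\|\nabla_x V\|_{L^\infty}$. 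Weak-$L^2_{\mathrm{loc}}$ compactness with the weight $y^{\alpha_1}$ then passes the uniform estimate to the limit $h\to 0^+$, producing $\nabla U_{x_j}$ with the claimed integrability and never invoking a pointwise second derivative of $F$.
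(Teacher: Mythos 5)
Your proposal is correct and follows essentially the same route as the paper: a Caccioppoli estimate for the difference quotients $U_\eta$ in the $x_j$-direction, testing the translated weak formulation with $U_\eta\varphi^2$, bounding the boundary term through the local Lipschitz continuity of $F_1$ together with \eqref{gradLinf}, and obtaining a bound uniform in the increment. The only (immaterial) difference is at the final limit: you invoke weak compactness in weighted $L^2$, while the paper sends $\eta\to0$ and applies Fatou's lemma.
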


\begin{proof} 
Given~$|\eta|<1$,~$\eta\neq0$, we consider the incremental quotient of~$U$ and~$V$
$$ U_{\eta}(x,y):=\frac{U(x_1,\ldots,x_j+\eta,\ldots,x_n,y)-U(x_1,\ldots,x_j,\ldots,x_n,y)}{\eta}, $$
$$ V_{\eta}(x,y):=\frac{V(x_1,\ldots,x_j+\eta,\ldots,x_n,y)-V(x_1,\ldots,x_j,\ldots,x_n,y)}{\eta}. $$

Since~$F_1$ is locally Lipschitz and~\eqref{gradLinf} holds, we have
\begin{eqnarray}\label{51} 
[F_1(U,V)]_{\eta} &=& \frac{1}{\eta}\Big(F_1(U(x_1,\ldots,x_j+\eta,\ldots,x_n,0), V(x_1,\ldots,x_j+\eta,\ldots,x_n,0)) \nonumber\\ 
&&\qquad -F_1(U(x_1,\ldots,x_j,\ldots,x_n,0), V(x_1,\ldots,x_j,\ldots,x_n,0))\Big)  \nonumber\\
&\leq& \frac{C}{\eta}\Big(|U(x_1,\ldots,x_j+\eta,\ldots,x_n,0)-U(x_1,\ldots,x_j,\ldots,x_n,0)| \nonumber\\ &&\qquad + |V(x_1,\ldots,x_j+\eta,\ldots,x_n,0)-V(x_1,\ldots,x_j,\ldots,x_n,0)|\Big) \nonumber\\
&\leq& C\left(|U_{\eta}(x,0)|+|V_{\eta}(x,0)|\right) \nonumber\\
&\leq& C\left(|\nabla_x U(x,0)|+|\nabla_x V(x,0)|\right) \nonumber\\
&\leq& C. 
\end{eqnarray}

Now, we take~$\xi_1$ satisfying the conditions required in~\eqref{weak}. 
Then, the first equation in~\eqref{weak} implies that 
\begin{equation}\begin{split}\label{aggiunta} 
&\int_{\R^{n+1}_+}y^{\alpha_1}\nabla U_{\eta}\cdot\nabla\xi_1 -\int_{\R^n}[F_1(U,V)]_{\eta} \, \xi_1  \\
&= -\int_{\R^{n+1}_+}y^{\alpha_1}\nabla U\cdot\nabla(\xi_1)_{-\eta} +\int_{\R^n}F_1(U,V)(\xi_1)_{-\eta}  \\
&= 0.
\end{split}\end{equation}

We consider a smooth cutoff function~$\varphi\in C^{\infty}_0(B_{R+1})$ 
such that~$\varphi\geq0$, $\varphi=1$ in~$B_R$ and~$|\nabla\varphi|\leq2$. 
Then, taking~$\xi_1:=U_{\eta}\varphi^2$ in~\eqref{aggiunta}, one obtains
\begin{equation}\label{55} 
\int_{\R^{n+1}_+}y^{\alpha_1}\left(\varphi^2|\nabla U_{\eta}|^2 + 2\varphi\, U_{\eta}\nabla U_{\eta}\cdot\nabla\varphi\right) 
 = \int_{\R^n}[F_1(U,V)]_{\eta} \, U_{\eta}\, \varphi^2.
\end{equation}
We notice that, thanks to~\eqref{cond}, \eqref{gradLinf} and the properties of the function~$\varphi$, the conditions in~\eqref{xi} are satisfied, 
and therefore the above choice of~$\xi_1$ is admissible. 

Now, using the Cauchy-Schwarz inequality, we have that, for any~$\epsilon>0$,
$$ \int_{\R^{n+1}_+}y^{\alpha_1}\varphi\, U_{\eta}\nabla U_{\eta}\cdot\nabla\varphi \geq 
-\frac{\epsilon}{2}\int_{\R^{n+1}_+}y^{\alpha_1}\varphi^2 |\nabla U_{\eta}|^2 
- \frac{1}{2\epsilon}\int_{\R^{n+1}_+}y^{\alpha_1}U_{\eta}^2|\nabla\varphi|^2. $$

Hence, for~$\epsilon$ sufficiently small, \eqref{55} gives
$$ 
\int_{\R^{n+1}_+}y^{\alpha_1}\varphi^2|\nabla U_{\eta}|^2 \leq C\left(\int_{B_{R+1}^+}y^{\alpha_1} U_{\eta}^2 + \int_{\left\lbrace |x|\leq R+1\right\rbrace \times\left\lbrace y=0\right\rbrace} [F_1(U,V)]_{\eta}\, U_{\eta}\right).
$$

Thus, from~\eqref{gradLinf} and~\eqref{51}, we have that
$$ \int_{B_R^+}y^{\alpha_1}|\nabla U_{\eta}|^2\leq C $$
uniformly in~$\eta$. 
Now, we send~$\eta\rightarrow0$ and we use Fatou lemma. 
Then we get the thesis.  

Reasoning in the same way we obtain the same claim for~$V$.
\end{proof}

Next, we state some regularity results that we will need for some subsequent computations 
(see~\cite{SV} for the proof).
\begin{lemma}\label{lem1}  
Let~$(U,V)$ be a weak solution of~\eqref{systext} satisfying \eqref{gradLinf}.

Then,
\begin{equation}\begin{split}\label{W11}
&\mbox{for\ almost\ any\ } y>0, \mbox{\ the\ maps\ } x\in\R^n\mapsto\nabla U(x,y) \\
&\mbox{and\ } x\in\R^n\mapsto\nabla V(x,y) \mbox{\ are\ in\ } W^{1,1}_{loc}(\R^n,\R^{n+1}),
\end{split}\end{equation}
and 
\begin{equation}\begin{split}\label{L1}
&\mbox{the\ maps\ } (x,y)\in\R^{n+1}_+\mapsto y^{\alpha_1}\sum_{j=1}^n\left(|\nabla U_{x_j}|^2 + |U_{x_j}|^2\right) \\
&\mbox{and\ } (x,y)\in\R^{n+1}_+\mapsto y^{\alpha_2}\sum_{j=1}^n\left(|\nabla V_{x_j}|^2 + |V_{x_j}|^2\right) \\ & \mbox{\ are\ in\ } L^1(B_r^+), \mbox{\ for\ any\ } r>0.
\end{split}\end{equation}
Moreover, 
\begin{equation}\begin{split}\label{L1bis}
&\mbox{the\ maps\ } (x,y)\in\R^{n+1}_+\mapsto y^{\alpha_1}\left(\Big|\nabla |\nabla_x U|\Big|^2 + |\nabla_x U|^2\right) \\
&\mbox{and\ } (x,y)\in\R^{n+1}_+\mapsto y^{\alpha_2}\left(\Big|\nabla |\nabla_x V|\Big|^2 + |\nabla_x V|^2\right) \\ & \mbox{\ are\ in\ } L^1(B_r^+), \mbox{\ for\ any\ } r>0.
\end{split}\end{equation}
\end{lemma}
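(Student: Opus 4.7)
My plan is to prove the three assertions in the order \eqref{L1}, \eqref{L1bis}, \eqref{W11}, relying on Lemma~\ref{lem2}, Proposition~\ref{prop:reg}, and the fact that $\alpha_1,\alpha_2\in(-1,1)$ so the weight $y^{\alpha_i}$ is locally integrable on $\overline{\R^{n+1}_+}$. Since the two components of the system decouple at this regularity level, I argue only for $U$. For \eqref{L1}: the $L^\infty$ bound on $\nabla_x U$ from Proposition~\ref{prop:reg} combined with $y^{\alpha_1}\in L^1_{loc}$ yields $y^{\alpha_1}\sum_j|U_{x_j}|^2\in L^1(B_r^+)$, and Lemma~\ref{lem2} directly provides $y^{\alpha_1}\sum_j|\nabla U_{x_j}|^2\in L^1(B_r^+)$.

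For \eqref{L1bis} the key is a Kato-type pointwise inequality for $w:=|\nabla_x U|$. A direct Cauchy--Schwarz computation gives
\begin{equation*}
|\nabla w|^2 = \frac{1}{w^2}\Biggl(\sum_{k=1}^n \Bigl|\sum_{j=1}^n U_{x_j}\, U_{x_j x_k}\Bigr|^2 + \Bigl|\sum_{j=1}^n U_{x_j}\, U_{x_j y}\Bigr|^2\Biggr) \le \sum_{j=1}^n |\nabla U_{x_j}|^2
\end{equation*}
at every point where $w>0$. Since $w$ is only Lipschitz and may vanish, I regularize by $w_\epsilon:=\sqrt{w^2+\epsilon}$: the same computation gives $|\nabla w_\epsilon|^2\le\sum_j|\nabla U_{x_j}|^2$ pointwise everywhere, so multiplying by $y^{\alpha_1}$ and integrating over $B_r^+$ yields a bound on $\|y^{\alpha_1/2}\nabla w_\epsilon\|_{L^2(B_r^+)}$ that is uniform in $\epsilon$ by \eqref{L1}. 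As $w_\epsilon\to w$ uniformly, a weak-compactness and lower-semicontinuity argument identifies the weak limit of $\nabla w_\epsilon$ with the distributional $\nabla w$ and produces $y^{\alpha_1}|\nabla w|^2\in L^1(B_r^+)$. Combined with the $L^\infty$ bound on $|\nabla_x U|$, this gives \eqref{L1bis}. The chain-rule step at the singular set $\{w=0\}$ is the single nontrivial technical point of the lemma.

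Finally, \eqref{W11} follows from \eqref{L1} and \eqref{cond} by Fubini. For almost every $y\in(0,r)$ the $\R^{n+1}$-valued slice $x\mapsto\nabla U(x,y)=(U_{x_1},\ldots,U_{x_n},U_y)(x,y)$ inherits from those weighted $L^1$ bounds the local $L^2$-integrability of each component and of each $x$-derivative of each component, because on a fixed slice $y=y_0>0$ the weight $y_0^{\alpha_1}$ is a harmless positive constant. Hence each $U_{x_j}(\,\cdot\,,y)$ and $U_y(\,\cdot\,,y)$ lies in $W^{1,2}_{loc}(\R^n)\subset W^{1,1}_{loc}(\R^n)$ for a.e.\ $y$, yielding \eqref{W11}.
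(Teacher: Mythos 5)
The paper itself contains no proof of this lemma: it is stated with a pointer to \cite{SV}, where the analogous regularity facts are established for the single-equation case, so the only comparison available is with that standard route --- and your argument is essentially that route, and it is correct. \eqref{L1} indeed follows from Lemma~\ref{lem2} together with the assumed bound \eqref{gradLinf} and the local integrability of $y^{\alpha_1}$ (one remark: quote \eqref{gradLinf} directly rather than Proposition~\ref{prop:reg}, which concerns the specific Poisson-kernel extension and is not among the hypotheses here; this is harmless since the bound is assumed anyway). For \eqref{L1bis}, the Kato-type inequality is the right key point, and your regularization $w_\epsilon=\sqrt{w^2+\epsilon}$, with the $\epsilon$-uniform weighted $L^2$ bound, weak compactness and lower semicontinuity, is a clean way to handle the set $\{w=0\}$ and to produce the distributional gradient of $|\nabla_x U|$ in the weighted space. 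For \eqref{W11}, the Fubini/slicing property of Sobolev functions is the standard argument; the one step you leave implicit is that the $x$-derivatives of the last component $U_y$ are controlled because $\partial_{x_k}U_y=\partial_y U_{x_k}$ in the sense of distributions, so that they are covered by the terms $|\nabla U_{x_k}|^2$ appearing in \eqref{L1}, and that for a.e.\ $y$ the weak gradient of the slice coincides with the slice of the weak gradient. Both are standard facts and should simply be stated explicitly; they do not constitute a gap.
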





\section{A density result}
In this section we prove a density result that will be useful in the sequel.

We use~\eqref{W11} to say that
\begin{equation}\begin{split}\label{equ}
\int_{\R^{n+1}_+}y^{\alpha_1}\nabla U_{x_j}\cdot\phi = \int_0^{\infty}y^{\alpha_1}\left(\int_{\R^n}\nabla U_{x_j}\cdot\phi\, dx\right) dy = 
-\int_{\R^{n+1}_+} y^{\alpha_1}\nabla U\cdot\phi_{x_j}, \\
\int_{\R^{n+1}_+}y^{\alpha_2}\nabla V_{x_j}\cdot\psi = \int_0^{\infty}y^{\alpha_2}\left(\int_{\R^n}\nabla V_{x_j}\cdot\psi\, dx\right) dy = 
-\int_{\R^{n+1}_+} y^{\alpha_2}\nabla V\cdot\psi_{x_j}, 
\end{split}\end{equation}
for any~$j=1,\ldots,n$ and any~$\phi,\psi\in C^{\infty}(\R^{n+1}_+, \R^n)$. 

Now, using the first equality in~\eqref{equ} and the first equation in~\eqref{weak}, 
we obtain that, for any~$j=1,\ldots,n$ and any~$\phi\in C^{\infty}(\R^{n+1}_+)$ 
supported in~$B_R$,
\begin{equation}\begin{split}\label{estU}
\int_{\R^{n+1}_+} y^{\alpha_1} \nabla U_{x_j}\cdot\nabla\phi &= 
-\int_{\R^{n+1}_+}y^{\alpha_1} \nabla U\cdot\nabla\phi_{x_j} \\
&= -\int_{\partial\R^{n+1}_+} F_1(U,V)\, \phi_{x_j} \\
&= \int_{\mathcal D_{uv}}\left(F_1(U,V)\right)_{x_j} \phi \\
&= \int_{\mathcal D_{uv}} \left( F_{11}(U,V)U_{x_j} + F_{12}(U,V)V_{x_j}\right)\phi.
\end{split}\end{equation}
In the same way, using the second equality in~\eqref{equ} and the second 
equation in~\eqref{weak}, we have that, for any~$j=1,\ldots,n$ and 
any~$\psi\in C^{\infty}(\R^{n+1}_+)$ supported in~$B_R$, 
\begin{equation}\begin{split}\label{estV}
\int_{\R^{n+1}_+} y^{\alpha_2} \nabla V_{x_j}\cdot\nabla\psi &= -\int_{\R^{n+1}_+}y^{\alpha_2} \nabla V\cdot\nabla\psi_{x_j} \\
&= -\int_{\partial\R^{n+1}_+} F_2(U,V)\, \psi_{x_j} \\
&= \int_{\mathcal D_{uv}}\left(F_2(U,V)\right)_{x_j} \psi \\
&= \int_{\mathcal D_{uv}} \left( F_{12}(U,V)U_{x_j} + F_{22}(U,V)V_{x_j}\right)\psi.
\end{split}\end{equation}

In the next sections we will need to use~\eqref{estU} and~\eqref{estV} for 
less regular test functions. 
To do this, we prove the following: 
\begin{lemma}\label{lemdensity}
Let~$(U,V)$ be a weak solution of~\eqref{systext} satisfying \eqref{gradLinf}.

Then, we have that~\eqref{estU} and~\eqref{estV} hold for any~$j=1,\ldots,n$, 
any~$\phi,\psi\in W^{1,2}_0(B)$ and any ball~$B\subset\R^{n+1}_+$.  
\end{lemma}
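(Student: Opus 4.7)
\textbf{Proof proposal for Lemma \ref{lemdensity}.} The plan is to exploit a decisive simplification: since $B$ is a ball with $B\subset \R^{n+1}_+$ (a strict inclusion), we have $\dist(B, \partial \R^{n+1}_+)>0$, so there exist constants $0<c_1\leq c_2<\infty$ with $c_1 \leq y^{\alpha_i} \leq c_2$ on $\bar B$ for $i=1,2$. Thus on $B$ the weighted and unweighted $L^2$-norms are comparable, and any $\phi\in W^{1,2}_0(B)$, extended by zero, vanishes on a full neighbourhood of $\partial\R^{n+1}_+$. In particular the entire right-hand side of \eqref{estU} (an integral on $\mathcal D_{uv}\subset\{y=0\}$) is zero when evaluated on such $\phi$; the same holds for \eqref{estV}. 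So the content of the lemma reduces to showing
$$\int_{B} y^{\alpha_1}\nabla U_{x_j}\cdot\nabla\phi = 0 \aand \int_{B} y^{\alpha_2}\nabla V_{x_j}\cdot\nabla\psi = 0$$
for $\phi,\psi\in W^{1,2}_0(B)$.

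First I would fix a radius $R>0$ so large that $B\subset B_R^+$, and approximate $\phi\in W^{1,2}_0(B)$ by a sequence $\phi_k\in C^\infty_c(B)$ with $\phi_k\to\phi$ in $W^{1,2}(B)$. Each $\phi_k$, extended by zero, is a smooth compactly supported function on $\R^{n+1}_+$ which vanishes identically on $\{y=0\}$ and which is supported in $B_R$, so it is admissible as a test function in \eqref{estU}. Applying \eqref{estU} gives
\begin{equation*}
\int_{\R^{n+1}_+} y^{\alpha_1}\nabla U_{x_j}\cdot\nabla\phi_k = \int_{\mathcal D_{uv}}\bigl(F_{11}(U,V)U_{x_j}+F_{12}(U,V)V_{x_j}\bigr)\,\phi_k = 0,
\end{equation*}
the last equality because $\phi_k\equiv 0$ on $\{y=0\}$.

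The remaining step is a routine passage to the limit. By Lemma \ref{lem2} we have $y^{\alpha_1}|\nabla U_{x_j}|^2\in L^1(B_R^+)$, and since $y^{\alpha_1}\geq c_1>0$ on $B$ we deduce $\nabla U_{x_j}\in L^2(B)$ (in the unweighted sense). Using $y^{\alpha_1}\in L^\infty(B)$ and Cauchy--Schwarz,
\begin{equation*}
\left|\int_B y^{\alpha_1}\nabla U_{x_j}\cdot\nabla(\phi_k-\phi)\right| \leq c_2\,\|\nabla U_{x_j}\|_{L^2(B)}\,\|\nabla(\phi_k-\phi)\|_{L^2(B)} \longrightarrow 0.
\end{equation*}
Hence $\int_B y^{\alpha_1}\nabla U_{x_j}\cdot\nabla\phi=0$, which is exactly \eqref{estU} for this $\phi$ (both sides being zero). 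The argument for \eqref{estV} with $\psi\in W^{1,2}_0(B)$ is word-for-word the same, using the second line of Lemma \ref{lem2} in place of the first.

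There is no genuine obstacle in this argument; the only point requiring any care is verifying that $\phi_k$, viewed as a function on $\R^{n+1}_+$, is genuinely admissible as a test function in \eqref{estU} (boundedness, local Lipschitz regularity, support in $B_R$, and the integrability condition \eqref{xi} — all immediate since $\phi_k$ is smooth and compactly supported strictly inside $\R^{n+1}_+$, where the weight $y^{\alpha_1}$ is smooth and bounded). Everything else is the standard fact that smooth compactly supported functions are dense in $W^{1,2}_0(B)$ combined with the uniform boundedness of $y^{\alpha_i}$ on $B$.
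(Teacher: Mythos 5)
There is a genuine gap, and it comes from your reading of the hypothesis ``$B\subset\R^{n+1}_+$''. You take the ball to lie at positive distance from the hyperplane $\{y=0\}$, conclude that every $\phi\in W^{1,2}_0(B)$ has zero trace there, and hence declare the right-hand sides of \eqref{estU} and \eqref{estV} to vanish identically. Under that reading the lemma becomes an interior statement with no boundary content, and your limit argument indeed proves it; but it is then useless for what the lemma is invoked for. In Proposition~\ref{prop1} the lemma is cited to justify the test function $\phi=\xi_1^2/U_{x_n}$, and in Theorem~\ref{Tineq} the test functions $U_{x_j}\phi^2$ and $|\nabla_x U|\phi$: none of these vanishes near $\{y=0\}$, and the boundary integrals $\int_{\mathcal{D}_{uv}}\bigl(F_{11}(U,V)U_{x_j}+F_{12}(U,V)V_{x_j}\bigr)\phi$ are exactly the terms those proofs need. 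The paper's own proof of the lemma makes the intended setting clear: it keeps the boundary term, applies \eqref{estU} to smooth approximations $\phi_k\in C^\infty_0(B)$, and then controls \emph{both} differences, the bulk one by Cauchy--Schwarz with the weight $y^{\alpha_1}$ (using \eqref{L1} and the local integrability of $y^{\alpha_1}$) and the boundary one by the boundedness of $F_{11}(U,V)U_{x_j}+F_{12}(U,V)V_{x_j}$ on $B\cap\mathcal{D}_{uv}$ (a consequence of \eqref{gradLinf} and the local Lipschitz character of $F_1,F_2$) together with the convergence of $\phi_k$ to $\phi$ on the trace. That convergence of the boundary integral is the only nontrivial point of the lemma, and it is entirely absent from your argument, because you have set the boundary term to zero by fiat.

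Two smaller remarks. First, even within your reading, $B\subset\R^{n+1}_+$ does not give $\dist(B,\partial\R^{n+1}_+)>0$: a ball tangent to $\{y=0\}$ is contained in the open half-space, yet on it $y^{\alpha_i}$ is not bounded above (if $\alpha_i<0$) or below (if $\alpha_i>0$), so the two-sided comparability of weighted and unweighted norms you rely on can fail. Second, the correct fix is not to strengthen the hypothesis but to redo the limit passage keeping the term $\int_{\mathcal{D}_{uv}}(F_{11}U_{x_j}+F_{12}V_{x_j})(\phi_k-\phi)$ and estimating it via the boundedness of the integrand and the convergence of the traces of $\phi_k-\phi$ on $\{y=0\}\cap B$ (equivalently, the $L^2$ control used in the paper); with that added, your bulk estimate can stay essentially as written, provided the weighted Cauchy--Schwarz is used as in the paper rather than the pointwise bounds $c_1\le y^{\alpha_1}\le c_2$.
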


\begin{proof} 
Let us prove~\eqref{estU}. In the same way one can obtain also~\eqref{estV}. 

Given~$\phi\in W^{1,2}_0(B)$, we consider a sequence of functions~$\phi_k\in C^{\infty}_0(B)$ which converge to~$\phi$ in~$W^{1,2}_0(B)$. 
Therefore, since~\eqref{estU} holds 
for any function~$\phi_k\in C^{\infty}(\R^{n+1}_+)$ supported in~$B$, 
we have that 
\begin{equation}\label{equderiv}
\int_{\R^{n+1}_+}y^{\alpha_1}\nabla U_{x_j}\cdot\nabla\phi_k = \int_{\mathcal D_{uv}} \left( F_{11}(U,V)U_{x_j} + F_{12}(U,V)V_{x_j}\right)\phi_k.  
\end{equation}
Also, 
\begin{eqnarray*}
&&\left|\int_{\R^{n+1}_+}y^{\alpha_1}\nabla U_{x_j}\cdot\nabla\left(\phi_k-\phi\right)\right| \\&&\qquad +\left|\int_{\mathcal D_{uv}} \left( F_{11}(U,V)U_{x_j} + F_{12}(U,V)V_{x_j}\right)\left(\phi_k-\phi\right)\right| \\
&\leq& \left(\int_{B}y^{\alpha_1}|\nabla U_{x_j}|^2\right)^{1/2}\left(\int_{B}|\nabla\left(\phi_k-\phi\right)|^2\right)^{1/2}\left(\int_{B}y^{\alpha_1}\right)^{1/2} \\&&\qquad + 
\left(\int_{B\cap\mathcal D_{uv}}|F_{11}(U,V)U_{x_j}+F_{12}(U,V)V_{x_j}|^2\right)^{1/2} \left(\int_{B}|\phi_k-\phi|^2\right)^{1/2}, 
\end{eqnarray*}
which tends to zero as~$k\rightarrow+\infty$, thanks to~\eqref{L1}, the local integrability 
of~$y^{\alpha_1}$ and the assumptions on~$U$. 
The latter consideration and~\eqref{equderiv} give~\eqref{estU}. 
\end{proof}

\section{Monotone solutions and proof of Theorem \ref{TMon}}
Recalling the definition of monotone solution given in~\eqref{monotonicity}, 
in this section we obtain some geometric inequalities and we prove 
Theorem~\ref{TMon}. 

\begin{prop}\label{prop1} 
Let~$(U,V)$ is a weak solution of~\eqref{systext} satisfying \eqref{gradLinf}. 
Suppose that
the monotonicity condition~\eqref{monotonicity} holds. 

Then, 
\begin{equation}\begin{split}\label{ineq1}
& \int_{\R^{n+1}_+}y^{\alpha_1}|\nabla\xi_1|^2\geq \int_{\mathcal D_{uv}}\left(F_{11}(U,V)+F_{12}(U,V)\frac{V_{x_n}}{U_{x_n}}\right)\xi_1^2 \\
\mbox{and\ } 
&\int_{\R^{n+1}_+}y^{\alpha_2}|\nabla\xi_2|^2\geq \int_{\mathcal D_{uv}}\left(F_{12}(U,V)\frac{U_{x_n}}{V_{x_n}}+F_{22}(U,V)\right)\xi_2^2,
\end{split}\end{equation}
for any~$\xi_1,\xi_2:B_R^+\rightarrow\R$ bounded, locally Lipschitz 
in the interior of~$\R^{n+1}_+$, which vanish on~$\R^{n+1}_+\setminus B_R$ 
and such that 
$$ y^{\alpha_1}|\nabla\xi_1|^2, y^{\alpha_2}|\nabla\xi_2|^2\in L^1 (B_R^+). $$
\end{prop}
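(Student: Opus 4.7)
The plan is to carry out the classical Sternberg--Zumbrun trick, which converts pointwise positivity of a solution to the linearized equation into a stability-type inequality. Differentiating \eqref{systext} in the direction $x_n$ (which was already done rigorously in \eqref{estU}--\eqref{estV} and Lemma~\ref{lemdensity}), we know that for admissible test functions $\phi,\psi$,
\begin{equation*}
\int_{\R^{n+1}_+} y^{\alpha_1} \nabla U_{x_n}\cdot\nabla\phi \;=\; \int_{\mathcal{D}_{uv}}\bigl(F_{11}(U,V)\,U_{x_n}+F_{12}(U,V)\,V_{x_n}\bigr)\,\phi,
\end{equation*}
and analogously for $V_{x_n}$. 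Since \eqref{monotonicity} guarantees $U_{x_n}>0$ (resp.\ $V_{x_n}<0$) in $\R^{n+1}_+$, we may formally insert the Sternberg--Zumbrun test function $\phi=\xi_1^2/U_{x_n}$ (resp.\ $\psi=\xi_2^2/V_{x_n}$).

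With this choice one computes
\begin{equation*}
\nabla\phi=\frac{2\xi_1\nabla\xi_1}{U_{x_n}}-\frac{\xi_1^2\nabla U_{x_n}}{U_{x_n}^2},
\end{equation*}
so the linearized identity becomes
\begin{equation*}
\int_{\R^{n+1}_+} y^{\alpha_1}\!\left(\frac{2\xi_1\nabla U_{x_n}\cdot\nabla\xi_1}{U_{x_n}}-\frac{\xi_1^2|\nabla U_{x_n}|^2}{U_{x_n}^2}\right)=\int_{\mathcal{D}_{uv}}\!\left(F_{11}(U,V)+F_{12}(U,V)\frac{V_{x_n}}{U_{x_n}}\right)\xi_1^2.
\end{equation*}
Applying Young's inequality to the cross term,
\begin{equation*}
\left|\frac{2\xi_1\nabla U_{x_n}\cdot\nabla\xi_1}{U_{x_n}}\right|\;\le\;|\nabla\xi_1|^2+\frac{\xi_1^2|\nabla U_{x_n}|^2}{U_{x_n}^2},
\end{equation*}
and rearranging, the singular term $\xi_1^2|\nabla U_{x_n}|^2/U_{x_n}^2$ cancels and yields the first inequality of \eqref{ineq1}. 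The second inequality is obtained by the same manipulation applied to the linearized $V$-equation with $\psi=\xi_2^2/V_{x_n}$; the signs are arranged so that the hypothesis $V_{x_n}<0$ plays symmetrically with $U_{x_n}>0$.

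The main technical obstacle is verifying that $\phi=\xi_1^2/U_{x_n}$ is actually admissible in the linearized identity. Lemma~\ref{lemdensity} allows test functions in $W^{1,2}_0(B)$ for balls $B$ strictly inside the open half-space, whereas $\xi_1$ may be nonzero on $\{y=0\}\cap B_R$ and $U_{x_n}$ is \emph{a priori} only strictly positive in the open half-space. I would handle this by a double approximation. First, replace $U_{x_n}$ by $U_{x_n}+\varepsilon$, so that the quotient is bounded and the factor $\xi_1^2/(U_{x_n}+\varepsilon)$ has controlled weighted $H^1$ norm thanks to \eqref{L1}, \eqref{L1bis}, Lemma~\ref{lem:regUV}, and the gradient bound \eqref{gradLinf}. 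Second, multiply by a cutoff $\chi_\delta(y)$ that vanishes on $\{y<\delta\}$ so that the resulting function is compactly supported inside $\R^{n+1}_+$ and Lemma~\ref{lemdensity} directly applies; the error terms produced by $\nabla\chi_\delta$ vanish as $\delta\to 0$ because $y^{\alpha_1}\nabla U_{x_n}$ is locally integrable. Finally, sending $\varepsilon\to 0$ by monotone/dominated convergence and exploiting strict positivity of $U_{x_n}$ on compact subsets of $\R^{n+1}_+$ recovers the desired identity, after which the Young-inequality step above completes the proof.
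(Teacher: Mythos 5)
Your core computation coincides with the paper's proof: you test the differentiated identity \eqref{estU} (with $j=n$) against the Sternberg--Zumbrun function $\xi_1^2/U_{x_n}$, expand the gradient and absorb the cross term (your Young inequality is the paper's completion of the square), using \eqref{monotonicity} to make sense of the quotient; the paper does exactly this and simply invokes Lemma~\ref{lemdensity} to admit the test function.

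The gap is in the approximation you add to justify admissibility. Once you multiply by a cutoff $\chi_\delta(y)$ vanishing for $y<\delta$, the trace of the test function on $\{y=0\}$ is zero, so the boundary term of \eqref{estU} drops out of the identity; the information you need must then be recovered, as $\delta\to0$, from the term $\int_{\R^{n+1}_+} y^{\alpha_1}\chi_\delta'(y)\,\partial_y U_{x_n}\,\xi_1^2/(U_{x_n}+\varepsilon)$. This term does \emph{not} vanish, and local integrability of $y^{\alpha_1}\nabla U_{x_n}$ cannot show that it does: $|\chi_\delta'|\sim\delta^{-1}$ on a strip of width $\delta$, so integrability gives no smallness, and in fact by the differentiated Neumann condition $\lim_{y\to0^+}\bigl(-y^{\alpha_1}\partial_y U_{x_n}\bigr)=F_{11}(U,V)U_{x_n}+F_{12}(U,V)V_{x_n}$ this term converges to minus the boundary integral $\int_{\mathcal D_{uv}}\bigl(F_{11}(U,V)U_{x_n}+F_{12}(U,V)V_{x_n}\bigr)\xi_1^2/(U_{x_n}+\varepsilon)$, i.e.\ precisely to the quantity that becomes the right-hand side of \eqref{ineq1}. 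Discarding it, as your sketch does, leaves only a trivial interior inequality with no trace term, so \eqref{ineq1} would not follow. The repair is either to keep the boundary term throughout --- that is, use \eqref{estU} for test functions with nonzero trace on $\{y=0\}$, which is how the paper reads Lemma~\ref{lemdensity}, and regularize only the denominator by $\varepsilon$, passing to the limit via \eqref{gradLinf}, \eqref{L1} and \eqref{L1bis} --- or, if you insist on the $y$-cutoff, to identify the limit of the $\chi_\delta'$ term as that boundary integral (which requires convergence of the weighted flux $y^{\alpha_1}\partial_y U_{x_n}$ to its boundary value, not mere integrability). Note also that \eqref{monotonicity} gives $U_{x_n}>0$ only in the open half-space, so on $\{y=0\}$ the factor $1/(U_{x_n}+\varepsilon)$ is not uniformly controlled as $\varepsilon\to0$; the final limit should be organized through Fatou on the sign-definite completed-square term rather than a blanket dominated convergence, a point your sketch leaves implicit.
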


\begin{proof} 
We exploit~\eqref{estU} with~$\phi:=\frac{\xi_1^2}{U_{x_n}}$. 
Notice that Lemma~\ref{lemdensity} implies that we can use it as test function in~\eqref{estU}. Therefore, we have
\begin{eqnarray*}
&&\int_{\mathcal D_{uv}}\left(F_{11}(U,V)+F_{12}(U,V)\frac{V_{x_n}}{U_{x_n}}\right)\xi_1^2 \\
&=& \int_{\R^{n+1}_+}y^{\alpha_1}\nabla U_{x_n}\cdot\nabla\left(\frac{\xi_1^2}{U_{x_n}}\right) \\
&=& \int_{\R^{n+1}_+}y^{\alpha_1}\nabla U_{x_n}\cdot\left(\frac{2\xi_1\nabla\xi_1 U_{x_n}-\xi_1^2\nabla U_{x_n}}{U_{x_n}^2}\right) \\
&=& \int_{\R^{n+1}_+}y^{\alpha_1}\left(2\xi_1\frac{\nabla\xi_1\cdot\nabla U_{x_n}}{U_{x_n}} -\xi_1^2\frac{|\nabla U_{x_n}|^2}{U_{x_n}^2}\right) \\
&=& - \int_{\R^{n+1}_+}y^{\alpha_1}\left|\xi_1\frac{\nabla U_{x_n}}{U_{x_n}}-\nabla\xi_1\right|^2 + \int_{\R^{n+1}_+}y^{\alpha_1}|\nabla\xi_1|^2 \\
&\leq& \int_{\R^{n+1}_+}y^{\alpha_1}|\nabla\xi_1|^2,
\end{eqnarray*}
which proves the first inequality in~\eqref{ineq1}. 
In the same way one can prove also the second inequality and this concludes the proof. 
\end{proof}

\begin{theorem}\label{Tineq}
Let~$U,V$ as in the hypotheses of Theorem~\ref{TMon}. 

Then, we have that 
\begin{equation}\begin{split}\label{ineq2}
& \int_{\mathcal R^{n+1}_U}y^{\alpha_1}\left(\mathcal K_U^2|\nabla_x U|^2 +\Big|\nabla_{L_U}|\nabla_x U|\Big|^2\right)\phi^2 \\
\leq & \int_{\R^{n+1}_+}y^{\alpha_1}|\nabla_x U|^2|\nabla\phi|^2 
+ \int_{\R^n}F_{12}(U,V)\left(\nabla_x U\cdot\nabla_x V-\frac{V_{x_n}}{U_{x_n}}|\nabla_x U|^2\right)\phi^2,
\end{split}\end{equation}
and 
\begin{equation}\begin{split}\label{ineq3}
& \int_{\mathcal R^{n+1}_V}y^{\alpha_2}\left(\mathcal K_V^2|\nabla_x V|^2 +\Big|\nabla_{L_V}|\nabla_x V|\Big|^2\right)\psi^2 \\
\leq & \int_{\R^{n+1}_+}y^{\alpha_2}|\nabla_x V|^2|\nabla\psi|^2 
+ \int_{\R^n}F_{12}(U,V)\left(\nabla_x U\cdot\nabla_x V-\frac{U_{x_n}}{V_{x_n}}|\nabla_x V|^2\right)\psi^2,
\end{split}\end{equation}
for any~$R>0$, and any Lipschitz functions~$\phi,\psi:\R^{n+1}\rightarrow\R$ which 
vanish on~$\R^{n+1}_+\setminus B_R$.
\end{theorem}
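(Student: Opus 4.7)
The plan is to combine the monotonicity-based Poincar\'e-type inequalities of Proposition~\ref{prop1} with the linearized equations~\eqref{estU}--\eqref{estV} and with the pointwise Sternberg--Zumbrun geometric identity for smooth level sets, in the spirit of the single-equation proofs of~\cite{SV,FSV} (which in turn go back to~\cite{SZ1,SZ2}). The two coupled boundary terms involving $F_{12}$ are precisely the new feature imposed by the system and arise because subtracting the stability-type inequality from the differentiated equation does not cancel the off-diagonal entry of the Hessian of $F$.

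For~\eqref{ineq2} I would first plug the test function $\xi_1 := |\nabla_x U|\,\varphi$ into the first inequality of Proposition~\ref{prop1}; expanding $|\nabla(|\nabla_x U|\varphi)|^2$ produces a display whose right-hand side is bounded below by $\int_{\mathcal D_{uv}}(F_{11}+F_{12}V_{x_n}/U_{x_n})|\nabla_x U|^2\varphi^2$. In parallel I would test the linearized identity~\eqref{estU} with $\phi := U_{x_j}\varphi^2$ for each $j=1,\dots,n$, sum in $j$, and use the pointwise relation $\sum_j U_{x_j}\nabla U_{x_j}=|\nabla_x U|\,\nabla|\nabla_x U|$ (valid almost everywhere on $\mathcal R^{n+1}_U$) to obtain an exact equality whose right-hand side is $\int_{\mathcal D_{uv}}(F_{11}|\nabla_x U|^2+F_{12}\,\nabla_x U\cdot\nabla_x V)\varphi^2$. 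Subtracting the two displays cancels both the $F_{11}$ contribution and the mixed cross term $2\varphi|\nabla_x U|\nabla|\nabla_x U|\cdot\nabla\varphi$, leaving
\[
\int_{\R^{n+1}_+}y^{\alpha_1}\Big(\sum_{j=1}^n|\nabla U_{x_j}|^2-\Big|\nabla|\nabla_x U|\Big|^2\Big)\varphi^2 \leq \int_{\R^{n+1}_+}y^{\alpha_1}|\nabla_x U|^2|\nabla\varphi|^2+\int_{\mathcal D_{uv}}F_{12}\Big(\nabla_x U\cdot\nabla_x V-\frac{V_{x_n}}{U_{x_n}}|\nabla_x U|^2\Big)\varphi^2.
\]
The conclusion~\eqref{ineq2} then follows by inserting the pointwise Sternberg--Zumbrun identity applied, for each fixed $y>0$, to the map $x\mapsto U(x,y)$: on $\mathcal R^{n+1}_U$ it gives $\sum_k|\nabla_x U_{x_k}|^2-\big|\nabla_x|\nabla_x U|\big|^2=\mathcal K_U^2|\nabla_x U|^2+\big|\nabla_{L_U}|\nabla_x U|\big|^2$, while the residual $y$-derivative contribution $\sum_j U_{x_jy}^2-(\partial_y|\nabla_x U|)^2$ is nonnegative by Cauchy--Schwarz; outside $\mathcal R^{n+1}_U$ the geometric quantities are read as zero and the left-hand integrand remains nonnegative. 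The twin inequality~\eqref{ineq3} is obtained symmetrically, using the second inequality in~\eqref{ineq1} and the equation~\eqref{estV}.

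The step I expect to be the main obstacle is the rigorous justification of the admissibility of the two families of test functions in the two key identities. Both $|\nabla_x U|\,\varphi$ and $U_{x_j}\,\varphi^2$ are only as regular as the H\"older regularity of $\nabla_x U$ provided by Lemma~\ref{lem:regUV}, so their use in Proposition~\ref{prop1} and in~\eqref{estU} cannot be direct: one must approximate them by smooth truncations and mollifications, verify the required weighted $L^2$-bounds on their gradients, and then pass to the limit. The uniform gradient bound~\eqref{estnuova} together with the weighted integrability provided by Lemmas~\ref{lem3},~\ref{lem2}, and~\ref{lem1} (in particular~\eqref{L1bis}) are decisive here, while Lemma~\ref{lemdensity} supplies the density framework that allows the passage to the limit.
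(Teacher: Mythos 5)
Your proposal follows essentially the same route as the paper's proof: test the first inequality of Proposition~\ref{prop1} with $\xi_1=|\nabla_x U|\varphi$, test \eqref{estU} with $U_{x_j}\varphi^2$ and sum in $j$, subtract to cancel the $F_{11}$ and cross terms, then apply the Sternberg--Zumbrun identity on $\mathcal R^{n+1}_U$ together with the Cauchy--Schwarz bound for the $\partial_y$-contribution, with Lemma~\ref{lemdensity}, \eqref{estnuova} and \eqref{L1bis} justifying admissibility of the test functions, exactly as in the paper. The only detail you leave implicit is the final replacement of the boundary integral over $\mathcal D_{uv}$ by one over $\R^n$, which the paper settles by the same Stampacchia-type observation (via \eqref{borel}) that $\nabla_x U=\nabla_x V=0$ almost everywhere on $\mathcal N_{uv}$.
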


\begin{proof}
We prove first~\eqref{ineq2}. By using the first inequality in~\eqref{ineq1} 
with~$\xi_1:=|\nabla_x U|\phi$, we have
\begin{equation}\begin{split}\label{20}
0 &\leq \int_{\R^{n+1}_+}y^{\alpha_1}\Big|\nabla\left(|\nabla_x U|\phi\right)\Big|^2 
 -\int_{\mathcal D_{uv}}\left(F_{11}(U,V)+F_{12}(U,V)\frac{V_{x_n}}{U_{x_n}}\right)\phi^2 |\nabla_x U|^2
\\ &= \int_{\R^{n+1}_+}y^{\alpha_1}\left[\Big|\nabla\left(|\nabla_x U|\right)\Big|^2\phi^2 + |\nabla_x U|^2 |\nabla\phi|^2 +2|\nabla_x U|\phi\nabla\phi\cdot\nabla\left(|\nabla_x U|\right)\right] \\ 
&\qquad -\int_{\mathcal D_{uv}}\left(F_{11}(U,V)|\nabla_x U|^2+F_{12}(U,V)\frac{V_{x_n}}{U_{x_n}}|\nabla_x U|^2\right)\phi^2.
\end{split}\end{equation}

Now, thanks to Lemma~\ref{lemdensity}, we can use~\eqref{estU} with~$U_{x_j}\phi^2$ as test function: 
\begin{eqnarray*}
&& \int_{\mathcal D_{uv}}\left(F_{11}(U,V)\, U_{x_j}^2+ F_{12}(U,V)\, U_{x_j}\, V_{x_j}\right)\phi^2 \\
&=& \int_{\R^{n+1}_+}y^{\alpha_1}\nabla U_{x_j}\cdot\nabla\left(U_{x_j}\phi^2\right) \\ 
&=& \int_{\R^{n+1}_+}y^{\alpha_1}\left(|\nabla U_{x_j}|^2\phi^2+2U_{x_j}\, \phi\nabla\phi\cdot\nabla U_{x_j}\right). 
\end{eqnarray*}
We sum over~$j=1,\ldots,n$,
\begin{equation}\begin{split}\label{21}
&\int_{\mathcal D_{uv}}\left(F_{11}(U,V)|\nabla_x U|^2+F_{12}(U,V)\nabla_x U\cdot\nabla_x V\right)\phi^2 \\ =& \int_{\R^{n+1}_+}y^{\alpha_1}\left[\sum_{j=1}^n|\nabla U_{x_j}|^2\phi^2 + \phi\nabla\phi\cdot\nabla\left(|\nabla_x U|^2\right)\right].  
\end{split}\end{equation}

Putting together~\eqref{20} and~\eqref{21} we get
\begin{eqnarray*}
&&\int_{\R^{n+1}_+} y^{\alpha_1}\left[\sum_{j=1}^n|\nabla U_{x_j}|^2\phi^2 + \phi\nabla\phi\cdot\nabla\left(|\nabla_x U|^2\right)\right] \\ 
&\leq& \int_{\R^{n+1}_+}y^{\alpha_1}\left[\Big|\nabla\left(|\nabla_x U|\right)\Big|^2 \phi^2 + |\nabla_x U|^2|\nabla\phi|^2+\phi\nabla\phi\cdot\nabla\left(|\nabla_x U|^2\right)\right] \\ &&\qquad +\int_{\mathcal D_{uv}}F_{12}(U,V)\left(\nabla_x U\cdot\nabla_x V - \frac{V_{x_n}}{U_{x_n}}|\nabla_x U|^2\right) \phi^2.
\end{eqnarray*}
This implies that 
\begin{equation}\begin{split}\label{22}
&\int_{\R^{n+1}_+}y^{\alpha_1}\left[\sum_{j=1}^n|\nabla U_{x_j}|^2-\Big|\nabla\left(|\nabla_x U|\right)\Big|^2\right]\phi^2 \\ 
\leq& \int_{\R^{n+1}_+}y^{\alpha_1}|\nabla_x U|^2|\nabla\phi|^2 + \int_{\mathcal D_{uv}}F_{12}(U,V)\left(\nabla_x U\cdot\nabla_x V-\frac{V_{x_n}}{U_{x_n}}|\nabla_x U|^2\right)\phi^2.
\end{split}\end{equation}

Now, we take~$r_1,r_2>0$ and~$P\in\R^{n+1}_+$ such that~$B_{r_1+r_2}(P)\subset\R^{n+1}_+$. 
From~\eqref{L1} and~\eqref{L1bis} we have that~$|\nabla_x U|$ and~$U_{x_j}$ are 
in~$W^{1,2}(B_r(P))$, and therefore in $W^{1,1}_{loc}(B_r(P))$. 

Then, by Stampacchia theorem (see, for instance, 
Theorem~$6.19$ in~\cite{LL}), we obtain that
$$\nabla\left(|\nabla_x U|\right)=0$$ 
for almost any~$(x,y)\in B_{r_1}(P)$ such 
that~$|\nabla_x U|=0$, and~$\nabla U_{x_j}=0$ for almost any~$(x,y)\in B_{r_1}(P)$ 
such that~$U_{x_j}=0$. 

Now, since we can take~$P,r_1$ and~$r_2$ arbitrarily, we obtain 
that~$\nabla\left(|\nabla_x U|\right)=0=\nabla U_{x_j}$ for almost 
any~$(x,y)$ such that~$\nabla_x U(x,y)=0$. 

Therefore, we can write \eqref{22} as
\begin{eqnarray*}
&&\int_{\mathcal R^{n+1}_U}y^{\alpha_1}\left(\sum_{j=1}^n\left(\partial_y U_{x_j}\right)^2 - \left(\partial_y|\nabla_x U|\right)^2\right)\phi^2 \\ 
&&\qquad + \int_{\mathcal R^{n+1}_U}y^{\alpha_1}\left(\sum_{j=1}^n|\nabla_x U_{x_j}|^2 - \Big|\nabla_x\left(|\nabla_x U|\right)\Big|^2\right)\phi^2 \\ 
&\leq& \int_{\R^{n+1}_+}y^{\alpha_1}|\nabla_x U|^2|\nabla\phi|^2 + \int_{\mathcal D_{uv}}F_{12}(U,V)\left(\nabla_x U\cdot\nabla_x V-\frac{V_{x_n}}{U_{x_n}}|\nabla_x U|^2\right)\phi^2,
\end{eqnarray*}
where~$\mathcal R^{n+1}_U$ is as in~\eqref{rU}.
By using a standard differential geometry formula 
(see, for instance formula~$(2.10)$ in~\cite{FSV}), we have
\begin{equation}\begin{split}\label{23}
&\int_{\mathcal R^{n+1}_U}y^{\alpha_1}\left(\sum_{j=1}^n\left(\partial_y U_{x_j}\right)^2 - \left(\partial_y|\nabla_x U|\right)^2\right)\phi^2 \\ 
&\qquad + \int_{\mathcal R^{n+1}_U}y^{\alpha_1}\left(\mathcal K^2|\nabla_x U|^2+\Big|\nabla_{L_U}|\nabla_x U|\Big|^2\right)\phi^2 \\ 
\leq& \int_{\R^{n+1}_+}y^{\alpha_1}|\nabla_x U|^2|\nabla\phi|^2 + \int_{\mathcal D_{uv}}F_{12}(U,V)\left(\nabla_x U\cdot\nabla_x V-\frac{V_{x_n}}{U_{x_n}}|\nabla_x U|^2\right)\phi^2. 
\end{split}\end{equation}
Now, we observe that, on~$\mathcal R^{n+1}_U$, 
\begin{equation}\begin{split}\label{999}
\sum_{j=1}^n\left(\partial_y U_{x_j}\right)^2-\left(\partial_y|\nabla_x U|\right)^2 &= 
|\nabla_x\left(\partial_y U\right)|^2-\left(\partial_y|\nabla_x U|\right)^2 \\
&= |\nabla_x\left(\partial_y U\right)|^2-\Big|\frac{\nabla_x U\cdot\nabla\left(\partial_y U\right)}{\nabla_x U}\Big|^2 \\
&\geq 0,
\end{split}\end{equation}
which implies, together with~\eqref{23}, that 
\begin{eqnarray*}
&& \int_{\mathcal R^{n+1}_U}y^{\alpha_1}\left(\mathcal K_U^2|\nabla_x U|^2+\Big|\nabla_{L_U}|\nabla_x U|\Big|^2\right)\phi^2 \nonumber\\ &\leq& 
\int_{\R^{n+1}_+}y^{\alpha_1}|\nabla_x U|^2|\nabla\phi|^2 + \int_{\mathcal D_{uv}}F_{12}(U,V)\left(\nabla_x U\cdot\nabla_x V-\frac{V_{x_n}}{U_{x_n}}|\nabla_x U|^2\right)\phi^2. 
\end{eqnarray*}
Notice also that~\eqref{borel} and Theorem~$6.19$ in~\cite{LL} give that 
$$ \nabla_x U=0=\nabla_x V,  \mbox{\ almost\ everywhere\ on\ } \mathcal N_{uv}, $$
and therefore
\begin{eqnarray*}
&& \int_{\mathcal D_{uv}}F_{12}(U,V)\left(\nabla_x U\cdot\nabla_x V-\frac{V_{x_n}}{U_{x_n}}|\nabla_x U|^2\right)\phi^2 \\&=& \int_{\partial\R^{n+1}_+}F_{12}(U,V)\left(\nabla_x U\cdot\nabla_x V-\frac{V_{x_n}}{U_{x_n}}|\nabla_x U|^2\right)\phi^2.  
\end{eqnarray*}
This complete the proof of~\eqref{ineq2}. 
In the same way one can prove~\eqref{ineq3}. 
\end{proof}

In order to prove Theorem~\ref{TMon}, 
we take~$\phi=\varphi=\psi$ in~\eqref{ineq2} and~\eqref{ineq3} respectively 
(where~$\varphi$ is as in Theorem~\ref{TMon}) and we sum up the two inequalities to get  
\begin{eqnarray*}
&&\int_{\mathcal R^{n+1}_U}y^{\alpha_1}\left(\mathcal K_U^2|\nabla_x U|^2  +\Big|\nabla_{L_U}|\nabla_x U|\big|^2\right)\varphi^2 \\ 
&&\qquad + \int_{\mathcal R^{n+1}_V}y^{\alpha_2}\left(\mathcal K_V^2|\nabla_x V|^2 +\Big|\nabla_{L_V}|\nabla_x V|\big|^2\right)\varphi^2 \\ 
&\leq& \int_{\R^{n+1}_+}\left(y^{\alpha_1}|\nabla_x U|^2 +y^{\alpha_2}|\nabla_x V|^2\right)|\nabla\varphi|^2 \\ 
&&\qquad + \int_{\R^n}F_{12}(U,V)\left(2\nabla_x U\cdot\nabla_x V -\frac{V_{x_n}}{U_{x_n}}|\nabla_x U|^2-\frac{U_{x_n}}{V_{x_n}}|\nabla_x V|^2\right)\varphi^2 \\ &=& \int_{\R^{n+1}_+}\left(y^{\alpha_1}|\nabla_x U|^2 +y^{\alpha_2}|\nabla_x V|^2\right)|\nabla\varphi|^2 \\ 
&&\qquad + \int_{\R^n}F_{12}(U,V)\left|\sqrt{\frac{-V_{x_n}}{U_{x_n}}}\nabla_x U + \sqrt{\frac{U_{x_n}}{-V_{x_n}}}\nabla_x V\right|^2 \varphi^2, 
\end{eqnarray*}
which is the desired result.

\section{Stable solutions and proof of Theorem \ref{T1}}
In this section we prove the Poincar\'e-type inequality in Theorem~\ref{T1}.

We show first that, under suitable assumptions, a monotone solution of~\eqref{systext} 
is also stable. 
\begin{prop}\label{propMS}
Let~$(U,V)$ be a monotone solution of~\eqref{systext}. 
Suppose 
that $F_{12}(U,V)\leq0$. Then~$(U,V)$ is a stable solution. 
\end{prop}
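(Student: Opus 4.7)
The plan is to derive the stability inequality \eqref{stable1} by summing the two Poincaré-type inequalities of Proposition~\ref{prop1} and then using the monotonicity together with the sign assumption on $F_{12}$ to bound the cross term.

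First, I would apply Proposition~\ref{prop1} to the given test functions $\xi_1,\xi_2$ (the hypotheses of Proposition~\ref{prop1} match the admissibility conditions in Definition~\ref{defstable}). Adding the two inequalities in \eqref{ineq1} gives
\begin{eqnarray*}
&&\int_{B_R^+} y^{\alpha_1}|\nabla\xi_1|^2 + \int_{B_R^+} y^{\alpha_2}|\nabla\xi_2|^2 \\
&\geq& \int_{\mathcal D_{uv}} \left( F_{11}(U,V)\,\xi_1^2 + F_{22}(U,V)\,\xi_2^2 + F_{12}(U,V)\Big(\tfrac{V_{x_n}}{U_{x_n}}\xi_1^2 + \tfrac{U_{x_n}}{V_{x_n}}\xi_2^2\Big)\right),
\end{eqnarray*}
where I have used that $U_{x_n},V_{x_n}\neq 0$ by the monotonicity condition \eqref{monotonicity}, so the ratios are well defined.

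The key algebraic step is the pointwise inequality
$$ F_{12}(U,V)\Big(\tfrac{V_{x_n}}{U_{x_n}}\xi_1^2 + \tfrac{U_{x_n}}{V_{x_n}}\xi_2^2\Big) \;\geq\; 2\, F_{12}(U,V)\,\xi_1\xi_2. $$
To see this, set $a:=-V_{x_n}/U_{x_n}$, which is strictly positive on $\overline{\R^{n+1}_+}$ thanks to \eqref{monotonicity}. Then $V_{x_n}/U_{x_n}=-a$ and $U_{x_n}/V_{x_n}=-1/a$, and
$$ \tfrac{V_{x_n}}{U_{x_n}}\xi_1^2 + \tfrac{U_{x_n}}{V_{x_n}}\xi_2^2 - 2\xi_1\xi_2 \;=\; -\Big(\sqrt{a}\,\xi_1 + \tfrac{1}{\sqrt{a}}\,\xi_2\Big)^{\!2} \;\leq\; 0. $$
Multiplying this nonpositive quantity by the nonpositive quantity $F_{12}(U,V)$ yields a nonnegative quantity, which is exactly the desired inequality.

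Combining this pointwise bound with the previous estimate and using that $\xi_1,\xi_2$ vanish outside $B_R$ (so the integrations over $\mathcal D_{uv}$ reduce to integrations over $\partial B_R^+\cap\mathcal D_{uv}$), I obtain \eqref{stable1}, which is the stability of $(U,V)$. The only delicate point is that all ingredients — in particular the test functions $\xi_i/U_{x_n}$ and $\xi_i/V_{x_n}$ hidden behind Proposition~\ref{prop1} — be admissible; this is guaranteed by Lemma~\ref{lemdensity} together with \eqref{gradLinf}, so no additional regularity issue arises. Thus the proof reduces, essentially, to the one-line algebraic identity above.
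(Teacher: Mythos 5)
Your proposal is correct and follows essentially the same route as the paper: sum the two inequalities of Proposition~\ref{prop1} and absorb the cross term via the completion of a square, $\frac{-V_{x_n}}{U_{x_n}}\xi_1^2+\frac{U_{x_n}}{-V_{x_n}}\xi_2^2+2\xi_1\xi_2=\bigl(\sqrt{\tfrac{-V_{x_n}}{U_{x_n}}}\,\xi_1+\sqrt{\tfrac{U_{x_n}}{-V_{x_n}}}\,\xi_2\bigr)^2\ge 0$, combined with $F_{12}(U,V)\le 0$. This is exactly the paper's argument, including the reliance on Proposition~\ref{prop1} (and hence on \eqref{gradLinf} and Lemma~\ref{lemdensity}) for admissibility of the hidden test functions.
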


\begin{proof} 
By summing up the inequalities in~\eqref{ineq1}, we have
\begin{eqnarray*} 
0&\leq& \int_{\R^{n+1}_+}\left(y^{\alpha_1}|\nabla\xi_1|^2+y^{\alpha_2}|\nabla\xi_2|^2\right) \\
&&\quad -\int_{\mathcal D_{uv}}\left(F_{11}(U,V)\, \xi_1^2+F_{22}(U,V)\, \xi_2^2+F_{12}(U,V)\left(\frac{V_{x_n}}{U_{x_n}}\, \xi_1^2 +\frac{U_{x_n}}{V_{x_n}}\, \xi_2^2\right)\right) \\
&=& \int_{\R^{n+1}_+}\left(y^{\alpha_1}|\nabla\xi_1|^2+y^{\alpha_2}|\nabla\xi_2|^2\right) \\ && -\int_{\mathcal D_{uv}}\left(F_{11}(U,V)\, \xi_1^2+F_{22}(U,V)\, \xi_2^2 - F_{12}(U,V) \left(\frac{-V_{x_n}}{U_{x_n}}\, \xi_1^2 +\frac{U_{x_n}}{-V_{x_n}}\, \xi_2^2\right)\right) \\
&\leq& \int_{\R^{n+1}_+}\left(y^{\alpha_1}|\nabla\xi_1|^2+y^{\alpha_2}|\nabla\xi_2|^2\right) \\
\\ &&\quad -\int_{\mathcal D_{uv}}\left(F_{11}(U,V)\, \xi_1^2 + F_{22}(U,V)\, \xi_2^2 + 2F_{12}(U,V)\, \xi_1\, \xi_2\right), 
\end{eqnarray*}
where we have used the monotonicity condition, the fact that~$F_{12}(U,V)\leq0$ 
together with 
$$ 0\leq \left(\sqrt{\frac{-V_{x_n}}{U_{x_n}}}\xi_1+\sqrt{\frac{U_{x_n}}{-V_{x_n}}}\xi_2\right)^2 
= \frac{-V_{x_n}}{U_{x_n}}\, \xi_1^2 + \frac{U_{x_n}}{-V_{x_n}}\, \xi_2^2+ 2\, \xi_1\, \xi_2. $$
This concludes the proof. 
\end{proof}

Now, thanks to Lemma~\ref{lemdensity}, we have that~\eqref{estU} and~\eqref{estV} 
hold for~$\phi=U_{x_j}\varphi^2$ and~$\psi=V_{x_j}\varphi^2$ respectively, 
where~$\varphi$ is as in the statement of Theorem~\ref{T1}. 
Therefore, we have
\begin{equation}\begin{split}\label{estU1}
& \int_{\partial B_R^+\cap\mathcal D_{uv}}\left(F_{11}(U,V)|\nabla_x U|^2 + F_{12}(U,V)\nabla_x U\cdot\nabla_x V\right) \varphi^2 \\
=& \sum_{j=1}^n\int_{B_R^+}y^{\alpha_1}\nabla U_{x_j}\cdot\nabla(U_{x_j}\varphi^2) \\ 
=& \sum_{j=1}^n\int_{B_R^+}y^{\alpha_1}\left(|\nabla U_{x_j}|^2\varphi^2 + U_{x_j}\nabla U_{x_j}\cdot \nabla(\varphi^2)\right) \\
=& \int_{B_R^+}y^{\alpha_1}\left(\sum_{j=1}^n |\nabla U_{x_j}|^2\varphi^2 + \varphi\nabla\varphi\cdot\nabla(|\nabla_x U|^2)\right). 
\end{split}\end{equation}
In the same way, we have
\begin{equation}\begin{split}\label{estV1}
& \int_{\partial B_R^+\cap\mathcal D_{uv}}\left(F_{12}(U,V)\nabla_x U\cdot\nabla_x V + F_{22}(U,V)|\nabla_x V|^2\right)\varphi^2 \\
=& \int_{B_R^+}y^{\alpha_2}\left(\sum_{j=1}^n|\nabla V_{x_j}|^2\varphi^2 + \varphi\nabla\varphi\cdot\nabla(|\nabla_x V|^2)\right). 
\end{split}\end{equation} 
By summing up~\eqref{estU1} and~\eqref{estV1}, we obtain
\begin{equation}\begin{split}\label{estUV}
& \int_{B_R^+}y^{\alpha_1}\left(\sum_{j=1}^n|\nabla U_{x_j}|^2\varphi^2 + \varphi\nabla\varphi\cdot\nabla(|\nabla_x U|^2)\right) \\
&\qquad + \int_{B_R^+}y^{\alpha_2}\left(\sum_{j=1}^n|\nabla V_{x_j}|^2\varphi^2 + \varphi\nabla\varphi\cdot\nabla(|\nabla_x V|^2)\right) \\
= & \int_{\partial B_R^+\cap\mathcal D_{uv}}\left[F_{11}(U,V)|\nabla_x U|^2 + F_{22}(U,V)|\nabla_x V|^2 + 2F_{12}(U,V)\nabla_x U\cdot\nabla_x V\right]\varphi^2.
\end{split}\end{equation}
 
Now, we take~$\xi_1:=|\nabla_x U|\varphi$ 
and~$\xi_2:=|\nabla_x V|\varphi$ in~\eqref{stable1}. 
We observe that~\eqref{xi} is satisfied, thanks to~\eqref{estnuova} and~\eqref{L1bis},  
and therefore we can use here such test functions. 
Then, we obtain
\begin{eqnarray*}
0 &\leq& \int_{B_R^+}y^{\alpha_1}|\nabla(|\nabla_x U|\varphi)|^2 + \int_{B_R^+}y^{\alpha_2}|\nabla(|\nabla_x V|\varphi)|^2 \nonumber\\ 
&&\quad - \int_{\partial B_R^+\cap\mathcal D_{uv}}\Big[F_{11}(U,V)|\nabla_x U|^2 + F_{22}(U,V)|\nabla_x V|^2 \nonumber\\ 
&&\qquad\qquad +2F_{12}(U,V) |\nabla_x U|\cdot|\nabla_x V|\Big]\varphi^2 \nonumber\\ 
&=& \int_{B_R^+}y^{\alpha_1}\left[ |\nabla(|\nabla_x U|)|^2\varphi^2 + |\nabla_x U|^2 |\nabla\varphi|^2 + \varphi\nabla\varphi\cdot\nabla(|\nabla_x U|^2)\right] \nonumber\\ 
&& \qquad + \int_{B_R^+}y^{\alpha_2}\left[ |\nabla(|\nabla_x V|)|^2\varphi^2 + |\nabla_x V|^2 |\nabla\varphi|^2 + \varphi\nabla\varphi\cdot\nabla(|\nabla_x V|^2)\right] \nonumber\\ &&\qquad - \int_{\partial B_R^+\cap\mathcal D_{uv}}\Big[F_{11}(U,V)|\nabla_x U|^2 + F_{22}(U,V)|\nabla_x V|^2 \nonumber\\ 
&&\qquad\qquad +2F_{12}(U,V)|\nabla_x U|\cdot|\nabla_x V|\Big]\varphi^2.
\end{eqnarray*}
The last inequality and~\eqref{estUV} imply 
\begin{eqnarray}\label{estUV2}
&& \int_{\R^{n+1}_+}y^{\alpha_1}\left(\sum_{j=1}^n(\partial_y U_{x_j})^2 - (\partial_y|\nabla_x U|)^2 + \sum_{j=1}^n |\nabla_x U_{x_j}|^2 - \Big|\nabla_x|\nabla_x U|\Big|^2\right)\varphi^2 \nonumber\\
&& + \int_{\R^{n+1}_+}y^{\alpha_2}\left(\sum_{j=1}^n(\partial_y V_{x_j})^2 - (\partial_y|\nabla_x V|)^2 + \sum_{j=1}^n |\nabla_x V_{x_j}|^2 - \Big|\nabla_x|\nabla_x V|\Big|^2\right)\varphi^2 \nonumber\\ 
&=& \int_{\R^{n+1}_+}y^{\alpha_1}\left(\sum_{j=1}^n|\nabla U_{x_j}|^2-\Big|\nabla|\nabla_x U|\Big|^2\right)\varphi^2 \nonumber\\ && \qquad +\int_{\R^{n+1}_+}\left(\sum_{j=1}^n|\nabla V_{x_j}|^2-\Big|\nabla|\nabla_x V|\Big|^2\right)\varphi^2 \nonumber\\ &\leq& \int_{\R^{n+1}_+}\left(y^{\alpha_1}|\nabla_x U|^2 +y^{\alpha_2}|\nabla_x V|^2\right)|\nabla\varphi|^2 \nonumber\\ &&\qquad - \int_{\partial\R^{n+1}_+\cap\mathcal D_{uv}} F_{12}(U,V)\left(|\nabla_x U|\cdot|\nabla_x V|-\nabla_x U\cdot\nabla_x V\right)\varphi^2. 
\end{eqnarray}
Arguing exactly as in Theorem~\ref{Tineq} (see the comments after formula~\eqref{22}) 
we have that~$\nabla|\nabla_x U|=0=\nabla U_{x_j}$ for almost any~$(x,y)$ such 
that~$\nabla_x U(x,y)=0$ and that~$\nabla|\nabla_x V|=0=\nabla V_{x_j}$ for almost 
any~$(x,y)$ such that~$\nabla_x V(x,y)=0$.

Hence, we can write~\eqref{estUV2} as
\begin{eqnarray*}
&& \int_{\mathcal R^{n+1}_U}y^{\alpha_1}\left(\sum_{j=1}^n(\partial_y U_{x_j})^2- (\partial_y |\nabla_x U|)^2\right)\varphi^2 \\ &&\qquad + \int_{\mathcal R^{n+1}_U}y^{\alpha_1}\left(\sum_{j=1}^n|\nabla_x U_{x_j}|^2-\Big|\nabla_x|\nabla_x U|\Big|^2\right)\varphi^2 \\ && \qquad +
\int_{\mathcal R^{n+1}_V}y^{\alpha_2}\left(\sum_{j=1}^n(\partial_y V_{x_j})^2- (\partial_y |\nabla_x V|)^2\right)\varphi^2 \\&&\qquad + \int_{\mathcal R^{n+1}_V}y^{\alpha_2}\left(\sum_{j=1}^n|\nabla_x V_{x_j}|^2-\Big|\nabla_x|\nabla_x V|\Big|^2\right)\varphi^2 \\ &\leq& \int_{\R^{n+1}_+}\left(y^{\alpha_1}|\nabla_x U|^2 +y^{\alpha_2}|\nabla_x V|^2\right)|\nabla\varphi|^2 \nonumber\\ &&\qquad - \int_{\mathcal D_{uv}} F_{12}(U,V)\left(|\nabla_x U|\cdot|\nabla_x V|-\nabla_x U\cdot\nabla_x V\right)\varphi^2.
\end{eqnarray*}

By using again a standard differential geometry formula 
(see, for example, formula~$(2.10)$ in~\cite{FSV}), we have
\begin{equation}\begin{split}\label{estUV3}
& \int_{\mathcal R^{n+1}_U}y^{\alpha_1}\left(\sum_{j=1}^n(\partial_y U_{x_j})^2- (\partial_y |\nabla_x U|)^2\right)\varphi^2 \\ &\qquad + \int_{\mathcal R^{n+1}_U}y^{\alpha_1}\left(\mathcal K_U^2|\nabla_x U|^2 +\Big|\nabla_{L_U}|\nabla_x U|\Big|^2 \right)\varphi^2 \\ & \qquad +
\int_{\mathcal R^{n+1}_V}y^{\alpha_2}\left(\sum_{j=1}^n(\partial_y V_{x_j})^2- (\partial_y |\nabla_x V|)^2\right)\varphi^2 \\&\qquad + \int_{\mathcal R^{n+1}_V}y^{\alpha_2}\left(\mathcal K_V^2|\nabla_x V|^2 + \Big|\nabla_{L_V}|\nabla_x V|\Big|^2  \right)\varphi^2 \\ \leq& \int_{\R^{n+1}_+}\left(y^{\alpha_1}|\nabla_x U|^2 +y^{\alpha_2}|\nabla_x V|^2\right)|\nabla\varphi|^2 \\ &\qquad - \int_{\mathcal D_{uv}} F_{12}(U,V)\left(|\nabla_x U|\cdot|\nabla_x V|-\nabla_x U\cdot\nabla_x V\right)\varphi^2.
\end{split}\end{equation}

Recalling~\eqref{999}, we have that, on~$\mathcal R^{n+1}_U$, 
$$
\sum_{j=1}^n (\partial_y U_{x_j})^2- (\partial_y |\nabla_x U|)^2 \geq 0.
$$
In the same way, one can see that, on~$\mathcal R^{n+1}_V$, 
$$
\sum_{j=1}^n (\partial_y V_{x_j})^2- (\partial_y |\nabla_x V|)^2 \geq 0.
$$
The last two inequalities and~\eqref{estUV3} imply that 
\begin{equation}\begin{split}\label{estUV4}
& \int_{\mathcal R^{n+1}_U}y^{\alpha_1}\left(\mathcal K_U^2|\nabla_x U|^2 +\Big|\nabla_{L_U}|\nabla_x U|\Big|^2 \right)\varphi^2 \\&\qquad + \int_{\mathcal R^{n+1}_V}y^{\alpha_2}\left(\mathcal K_V^2|\nabla_x V|^2 + \Big|\nabla_{L_V}|\nabla_x V|\Big|^2  \right)\varphi^2 \\ \leq& \int_{\R^{n+1}_+}\left(y^{\alpha_1}|\nabla_x U|^2 +y^{\alpha_2}|\nabla_x V|^2\right)|\nabla\varphi|^2 
\\ &\qquad - \int_{\mathcal D_{uv}} F_{12}(U,V)\left(|\nabla_x U|\cdot|\nabla_x V|-\nabla_x U\cdot\nabla_x V\right)\varphi^2.
\end{split}\end{equation}
Notice also that~\eqref{borel} and Theorem~$6.19$ in~\cite{LL} give that 
$$ \nabla_x U =0=\nabla_x V, \mathrm{\ almost\ everywhere\ on\ }\mathcal N_{uv}, $$
and therefore
\begin{eqnarray*}
&& \int_{\mathcal D_{uv}} F_{12}(U,V)\left(|\nabla_x U|\cdot|\nabla_x V|-\nabla_x U\cdot\nabla_x V\right)\varphi^2 \\ &=& 
\int_{\partial\R^{n+1}_+} F_{12}(U,V)\left(|\nabla_x U|\cdot|\nabla_x V|-\nabla_x U\cdot\nabla_x V\right)\varphi^2.
\end{eqnarray*}
This and~\eqref{estUV4} complete the proof of Theorem~\ref{T1}.

\section{Proof of Theorem \ref{T2}}

In order to prove Theorem~\ref{T2} we will test the geometric formula in~\eqref{geom} 
against a suitable test function in such a way that the left-hand side vanishes. 
Hence, this will imply that the tangential gradient and 
the curvature of the level sets of the functions~$U$ and~$V$, 
for fixed~$y>0$, vanish. The conclusion will be that these level sets are flat, 
as we desire. 

In the sequel we will denote by~$X:=(x,y)$ the points in~$\R^{n+1}_+$. 

For any~$\rho_1\leq\rho_2$, we define
$$ \mathcal B_{\rho_1,\rho_2}:=\left\lbrace X\in\R^{n+1}_+ : |X|\in[\rho_1,\rho_2]\right\rbrace . $$

We have the following lemma (see Lemma~$10$ in~\cite{SV} for a simple proof):
\begin{lemma}\label{lemh} 
Let~$R>0$ and~$h:B_R^+\rightarrow\R$ be a nonnegative measurable function. 

For any~$\rho\in(0,R)$, let 
$$ \eta(\rho):=\int_{B_{\rho}^+} h. $$

Then, 
$$ \int_{\mathcal B_{\sqrt R,R}}\frac{h(X)}{|X|^2}\, dX \leq 2\int_{\sqrt R}^R \frac{\eta(t)}{t^3}\, dt + \frac{\eta(R)}{R^2}. $$
\end{lemma}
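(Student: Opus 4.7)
The plan is to reduce the spatial integral over the annular region $\mathcal B_{\sqrt R, R}$ to a one-dimensional Stieltjes integral in the radial variable, and then perform a straightforward integration by parts.

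First, I would observe that $\eta:[0,R]\to\R$ is nondecreasing (because $h\ge 0$) and bounded, so it induces a nonnegative Borel measure $d\eta$ on $[0,R]$ with $d\eta([\rho_1,\rho_2])=\eta(\rho_2)-\eta(\rho_1)$. By Fubini/the coarea perspective, for every measurable $g:[\sqrt R, R]\to\R$ with $g\ge 0$ one has
\begin{equation*}
\int_{\mathcal B_{\sqrt R,R}} g(|X|)\, h(X)\, dX \;=\; \int_{\sqrt R}^{R} g(t)\, d\eta(t),
\end{equation*}
since the left-hand side equals $\int_{\sqrt R}^R g(t)\bigl(\tfrac{d}{dt}\eta(t)\bigr)dt$ when $\eta$ is absolutely continuous, and the general case follows from standard approximation. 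Applying this with $g(t)=t^{-2}$ yields
\begin{equation*}
\int_{\mathcal B_{\sqrt R,R}}\frac{h(X)}{|X|^2}\,dX \;=\; \int_{\sqrt R}^{R} \frac{d\eta(t)}{t^2}.
\end{equation*}

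Next I would integrate by parts in the Stieltjes sense. Writing $\tfrac{1}{t^2}=-\tfrac{d}{dt}\!\left(\tfrac{1}{t}\right)\cdot t$ is not the cleanest route; instead, I would directly use the product formula $d(\eta(t)/t^2) = \frac{d\eta(t)}{t^2} - \frac{2\eta(t)}{t^3}\,dt$, integrate from $\sqrt R$ to $R$, and rearrange to obtain
\begin{equation*}
\int_{\sqrt R}^{R}\frac{d\eta(t)}{t^2} \;=\; \frac{\eta(R)}{R^2} - \frac{\eta(\sqrt R)}{R} + 2\int_{\sqrt R}^{R}\frac{\eta(t)}{t^3}\,dt.
\end{equation*}
Finally, since $\eta(\sqrt R)\ge 0$, I would drop that nonpositive term to reach the claimed inequality
\begin{equation*}
\int_{\mathcal B_{\sqrt R,R}}\frac{h(X)}{|X|^2}\,dX \;\le\; 2\int_{\sqrt R}^{R}\frac{\eta(t)}{t^3}\,dt + \frac{\eta(R)}{R^2}.
\end{equation*}

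The only subtle point, and hence the main obstacle, is justifying the Stieltjes integration by parts without assuming $\eta\in C^1$: here $h$ is merely measurable and nonnegative, so $\eta$ is only monotone (in particular of bounded variation). This is handled by the classical integration-by-parts formula for Lebesgue--Stieltjes integrals of monotone functions, or equivalently by approximating $h$ from below by bounded continuous functions and passing to the limit via monotone convergence. Apart from this technical check, the proof is a direct radial computation.
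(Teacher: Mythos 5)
Your argument is correct: the radial reduction $\int_{\mathcal B_{\sqrt R,R}}|X|^{-2}h\,dX=\int_{\sqrt R}^R t^{-2}\,d\eta(t)$, the Stieltjes integration by parts, and dropping the nonpositive boundary term $-\eta(\sqrt R)/R$ (note $(\sqrt R)^2=R$) do yield the stated bound, and in fact an exact identity before the last step. It is, however, a somewhat different route from the one the paper points to: the paper gives no proof but cites Lemma~10 of~\cite{SV}, whose argument avoids any integration by parts by simply writing, for $|X|\le R$,
\begin{equation*}
\frac{1}{|X|^2}=\frac{1}{R^2}+2\int_{|X|}^R\frac{dt}{t^3},
\end{equation*}
multiplying by $h(X)\ge0$, and applying Tonelli, so that the inner integral $\int_{\mathcal B_{\sqrt R,t}}h\le\eta(t)$ immediately gives the claim; this needs no regularity of $\eta$ whatsoever and no discussion of Stieltjes measures. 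Your route buys a slightly sharper statement (the extra $-\eta(\sqrt R)/R$ term), at the price of the technical point you flag; that point is genuinely harmless, and in fact easier than you suggest: after reducing to the nontrivial case $\eta(R)<\infty$ (otherwise the right-hand side is infinite), the pushforward of $h\,dX$ under $X\mapsto|X|$ gives zero mass to Lebesgue-null subsets of $[0,R]$ (polar coordinates), so $\eta$ is not merely monotone but absolutely continuous, and the classical integration by parts for absolutely continuous functions applies directly, with no need for the general monotone/BV Lebesgue--Stieltjes formula or an approximation argument.
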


We will deduce Theorem~\ref{T2} from the following symmetry result, 
which holds in any dimension~$n$. 
\begin{theorem}\label{Tn}
Let the assumptions of Corollary~\ref{cor1} hold. Suppose that~$U,V$ are bounded. 
Moreover, assume that there exists~$C_0\geq1$ such that 
\begin{equation}\label{R2}
\int_{B_R^+} y^{\alpha_1}|\nabla U|^2\leq C_0 R^2, \qquad \int_{B_R^+} y^{\alpha_2}|\nabla V|^2\leq C_0 R^2
\end{equation}
for any~$R\geq C_0$. 

Then, there exist~$\omega_U,\omega_V\in S^{n-1}$, 
and~$U_0,V_0:\R\times(0,+\infty)\rightarrow\R$ such that 
\begin{equation}\label{omegaUV}
 U(x,y)=U_0(\omega_U\cdot x,y), \qquad V(x,y)=V_0(\omega_V\cdot x,y) 
\end{equation}
for any~$(x,y)\in\R^{n+1}_+$. 
\end{theorem}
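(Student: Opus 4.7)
The strategy is the classical logarithmic-cutoff argument: plug into the Poincar\'e-type inequality \eqref{geom} of Corollary~\ref{cor1} a test function that concentrates on larger and larger balls while its gradient becomes negligible, in a way compatible with the quadratic energy bound \eqref{R2}.

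More precisely, for $R \gg 1$ I would take the Lipschitz function
\[
\varphi_R(X) := \begin{cases} 1 & \text{if } |X|\leq \sqrt R, \\ \dfrac{2\log(R/|X|)}{\log R} & \text{if } \sqrt R \leq |X|\leq R, \\ 0 & \text{if } |X|\geq R, \end{cases}
\]
so that $|\nabla \varphi_R(X)|\leq \frac{2}{|X|\log R}$ on $\mathcal{B}_{\sqrt R, R}$ and vanishes elsewhere. With $h(X):= y^{\alpha_1}|\nabla_x U(X)|^2$ (and similarly for $V$), the assumption \eqref{R2} gives $\eta(t)\leq C_0 t^2$ for $t\geq C_0$, so Lemma~\ref{lemh} yields
\[
\int_{\R^{n+1}_+} y^{\alpha_1}|\nabla_x U|^2\,|\nabla\varphi_R|^2 \;\leq\; \frac{4}{(\log R)^2}\left(2\int_{\sqrt R}^R \frac{C_0 t^2}{t^3}\,dt + C_0\right) \;\leq\; \frac{C}{\log R},
\]
and the analogous bound for $V$. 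Hence the right-hand side of \eqref{geom} with $\varphi = \varphi_R$ tends to $0$ as $R\to \infty$.

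Since $\varphi_R\equiv 1$ on $B_{\sqrt R}^+$, the left-hand side of \eqref{geom} dominates the integrals over $B_{\sqrt R}^+\cap \mathcal R^{n+1}_U$ and $B_{\sqrt R}^+\cap \mathcal R^{n+1}_V$ of the nonnegative integrands there. Passing to the limit $R\to\infty$ by monotone convergence, I conclude
\[
\mathcal K_U \equiv 0,\quad \nabla_{L_U}|\nabla_x U|\equiv 0 \quad \text{on } \mathcal R^{n+1}_U,
\]
and the same statements for $V$. Thus for every fixed $y>0$ the connected components of the level sets of $x\mapsto U(x,y)$ (restricted to $L_U$) are affine hyperplanes, and $|\nabla_x U(\cdot, y)|$ is constant along each of them; a standard argument (as in \cite{FSV,SV}) then upgrades this to the existence, for each $y$, of a direction $\omega_U(y)\in S^{n-1}$ such that $U(x,y)$ depends only on $\omega_U(y)\cdot x$, and similarly for $V$.

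The remaining step, which I expect to be the genuine difficulty, is to show that $\omega_U$ and $\omega_V$ are independent of $y$, so that \eqref{omegaUV} holds with $y$-independent directions. The natural way to do this is to note that once $U(\cdot,y)$ is one-dimensional for each slice, the ratios $U_{x_i}/U_{x_j}$ (on the set $\mathcal R^{n+1}_U$) are constant in $x$ for each fixed $y$; their $y$-dependence is then killed by exploiting the degenerate-elliptic extension equation $\mathrm{div}(y^{\alpha_1}\nabla U)=0$ together with the continuity statements of Lemma~\ref{lem1}, propagating the direction through the $y$-variable. Combining this rigidity for $U$ and $V$ yields the conclusion of Theorem~\ref{Tn}. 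The delicate points in the whole plan are therefore (i) the verification that $\varphi_R$ is admissible in \eqref{geom}, which reduces to \eqref{cond} and Lemma~\ref{lem3}, and (ii) the propagation of the direction $\omega_U(y)$ in $y$, which is where the extension equation must be used in an essential way.
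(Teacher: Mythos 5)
The first half of your plan is exactly the paper's argument: apply Lemma~\ref{lemh} with the energy bound \eqref{R2}, plug the logarithmic cutoff $\varphi_R$ into \eqref{geom}, obtain a right-hand side of order $1/\log R$, and let $R\to\infty$ to conclude $\mathcal K_U=0=\big|\nabla_{L_U}|\nabla_x U|\big|$ on $\mathcal R^{n+1}_U$ (and likewise for $V$); invoking Lemma~2.11 of~\cite{FSV} (or Lemma~5 of~\cite{Di}) then gives, for each fixed $y>0$, directions $\omega_U(y),\omega_V(y)\in S^{n-1}$ with $U(x,y)=U_0(\omega_U(y)\cdot x,y)$, $V(x,y)=V_0(\omega_V(y)\cdot x,y)$. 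Up to this point your proposal and the paper coincide.

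The genuine gap is the step you yourself flag as the difficulty: you never prove that the slice-wise directions are independent of $y$, and the route you sketch (``the ratios $U_{x_i}/U_{x_j}$ are constant in $x$ for each fixed $y$; their $y$-dependence is then killed by the equation $\mathrm{div}(y^{\alpha_1}\nabla U)=0$ and Lemma~\ref{lem1}'') is not an argument. The map $y\mapsto\omega_U(y)$ comes from a slice-by-slice rigidity statement and has no a priori regularity in $y$; on slices where $\nabla_x U\equiv0$ the direction is not even defined, so a purely local ``propagation in $y$'' cannot be run without some global rigidity input, and your sketch never explains where the boundedness of $U,V$ (an explicit hypothesis of Theorem~\ref{Tn}) would enter. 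The paper closes this step by a quite different and concrete mechanism: by compactness of $S^{n-1}$ one extracts $y_j\to0^+$ with $\omega_U(y_j)\to\omega_U$, $\omega_V(y_j)\to\omega_V$; continuity of $U,V$ up to $\{y=0\}$ (Lemma~\ref{lem:regUV}) then shows that the traces $u=U(\cdot,0)$, $v=V(\cdot,0)$ are one-dimensional with these \emph{fixed} directions; one then forms the Poisson-kernel extensions $U^*,V^*$ of $u,v$ as in \eqref{extU}--\eqref{extV}, which inherit the fixed directions, and observes that $\overline U=U-U^*$, $\overline V=V-V^*$ are bounded solutions of $\mathrm{div}(y^{\alpha_i}\nabla\,\cdot\,)=0$ vanishing on $\partial\R^{n+1}_+$, hence identically zero by a Liouville-type result (footnote~3 in~\cite{SV}, p.~431 in~\cite{CSS}). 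This trace--extension--Liouville argument is precisely the content your proposal is missing; without it (or a worked-out substitute), the proof is incomplete.
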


\begin{proof} 
We apply Lemma~\ref{lemh} 
with~$h(X)=y^{\alpha_1}|\nabla U(X)|^2+y^{\alpha_2}|\nabla V(X)|^2$ and we use~\eqref{R2} 
to obtain that 
\begin{equation}\label{logR}
\int_{\mathcal B_{\sqrt R,R}}\frac{y^{\alpha_1}|\nabla U(X)|^2+y^{\alpha_2}|\nabla V(X)|^2}{|X|^2} \leq C_1 \log R
\end{equation}
for a suitable~$C_1$, and for~$R$ large enough. 

Now, we chose conveniently~$\varphi$ in~\eqref{geom}. 
For any~$R>1$, we define the function~$\varphi_R$ as  
\begin{equation}\label{fiR}
\varphi_R(X):=\left\{
\begin{matrix}
1 & {\mbox{ if $|X|\leq\sqrt R$,}} \\
2\, \frac{\log R-\log|X|}{\log R}
& {\mbox{ if $\sqrt R\leq |X|\leq R$,}}\\
0 & {\mbox{ if $|X|\geq R$.}}
\end{matrix}
\right.
\end{equation}
Notice that
$$  |\nabla\varphi_R(X)|\leq C_2\, \frac{\chi_{\mathcal B_{\sqrt R,R}}}{|X|\log R}. $$

Now, if we plug~$\varphi_R$ in the geometric inequality~\eqref{geom} 
and we use~\eqref{logR}, we have that, for large~$R$,
\begin{equation}\begin{split}\label{logR2}
& \int_{\mathcal R^{n+1}_U\cap B_{\sqrt R}^+}y^{\alpha_1}\left(\mathcal K_U^2 |\nabla_x U|^2 + \Big|\nabla_{L_U}|\nabla_x U|\Big|^2\right) \\ &\qquad + 
\int_{\mathcal R^{n+1}_V\cap B_{\sqrt R}^+}y^{\alpha_2}\left(\mathcal K_V^2 |\nabla_x V|^2 + \Big|\nabla_{L_V}|\nabla_x V|\Big|^2\right) \\
\leq& \frac{C_3}{(\log R)^2}\int_{\mathcal B_{\sqrt R,R}}\frac{y^{\alpha_1}|\nabla_x U|^2 + y^{\alpha_2}|\nabla_x V|^2}{|X|^2} \\ \leq& 
\frac{C_4}{\log R}.   
\end{split}\end{equation}
Letting~$R\rightarrow +\infty$ in~\eqref{logR2}, 
we obtain that 
\begin{eqnarray*}
&&\mathcal K_U^2 |\nabla_x U|^2 + \Big|\nabla_{L_U}|\nabla_x U|\Big|^2 =0 \qquad \mathrm{on\ } \mathcal R^{n+1}_U, \\
&&\mathcal K_V^2 |\nabla_x V|^2 + \Big|\nabla_{L_V}|\nabla_x V|\Big|^2 =0 \qquad \mathrm{on\ } \mathcal R^{n+1}_V, 
\end{eqnarray*}
which implies that 
\begin{eqnarray*}
&& \mathcal K_U =0= \Big|\nabla_{L_U}|\nabla_x U|\Big| \qquad \mathrm{on\ } \mathcal R^{n+1}_U, \\
&& \mathcal K_U =0=\Big|\nabla_{L_V}|\nabla_x V|\Big| \qquad \mathrm{on\ }\mathcal R^{n+1}_V. 
\end{eqnarray*}

Then, from Lemma~$2.11$ in~\cite{FSV} or Lemma~$5$ in~\cite{Di}, it follows that 
there exist~$\omega_U,\omega_V:(0,+\infty)\rightarrow S^{n-1}$ 
and~$U_0,V_0:\R\times(0,+\infty)\rightarrow\R$ such that 
\begin{equation}\begin{split}\label{1D}
U(x,y) &= U_0(\omega_U(y)\cdot x,y), \\
V(x,y) &= V_0(\omega_V(y)\cdot x, y),
\end{split}\end{equation}
for any~$(x,y)\in\R^{n+1}_+$. 

Now, we use the fact that~$S^{n-1}$ is compact, to say that we can take a sequence~$y_j\rightarrow0^+$ 
and~$\omega_U,\omega_V\in S^{n-1}$ in such a way 
that~$\omega^j_U:=\omega_U(y_j)\rightarrow\omega_U$ 
and~$\omega^j_V:=\omega_V(y_j)\rightarrow\omega_V$. 
Hence, by~\eqref{1D} and Lemma~\ref{lem:regUV}, we have
\begin{equation*}\begin{split}
u(x)=U(x,0)=\lim_{j\rightarrow +\infty}U(x,y_j) = \lim_{j\rightarrow +\infty}U_0(\omega_U^j\cdot x,y_j) = u_0(\omega_U\cdot x), \\
v(x)=V(x,0)=\lim_{j\rightarrow +\infty}V(x,y_j) = \lim_{j\rightarrow +\infty}V_0(\omega_V^j\cdot x,y_j) = v_0(\omega_V\cdot x). 
\end{split}\end{equation*}

As in~\cite{CS} and~\cite{SV}, we consider the Poisson kernel defined in~\eqref{poisson}. Then, we also define 
\begin{eqnarray*}
U^*(x,y):=\int_{\R^n}P_{\alpha_1}(\zeta,y)u(x-\zeta) \, d\zeta = \int_{\R^n}P_{\alpha_1}(\zeta,y) u_0(\omega_U\cdot x-\omega_U\cdot\zeta)\, d\zeta, \\
V^*(x,y):=\int_{\R^n}P_{\alpha_2}(\zeta,y)v(x-\zeta) \, d\zeta = \int_{\R^n}P_{\alpha_2}(\zeta,y) v_0(\omega_V\cdot x-\omega_V\cdot\zeta)\, d\zeta.
\end{eqnarray*}
Now, from the above definitions of~$U^*$ and~$V^*$, we have that there exist 
functions $U^*_0,V^*_0$ such that~$U^*(x,y)=U^*_0(\omega_U\cdot x,y)$ 
and~$V^*(x,y)=V^*_0(\omega_V\cdot x,y)$. 

Now, we define the functions~$\overline U:=U-U^*$ and~$\overline V:=V-V^*$. 
We observe that
$$ div(y^{\alpha_1}\nabla\overline U)=0=div(y^{\alpha_2}\nabla\overline V) $$ 
in $\R^{n+1}_+$, 
thanks to~\cite{CS} (recall Lemma~\ref{lemP}). 
Moreover, since~$U$ and~$V$ are bounded, we have that~$\overline U$ and~$\overline V$ are also bounded. Finally, we have also that~$\overline U(x,0)=0=\overline V(x,0)$. 

Therefore, by a Liouville-type result (see, for instance, the footnote~$3$ in~\cite{SV} 
or p.~$431$ in~\cite{CSS}), we obtain that~$\overline U$ and~$\overline V$ vanish 
identically. 

Hence, 
$$ U(x,y)=U^*_0(\omega_U\cdot x,y), \qquad V(x,y)=V^*_0(\omega_V\cdot x,y), $$
which gives~\eqref{omegaUV}.
\end{proof}

In order to complete the proof of Theorem~\ref{T2}, 
we notice that, since the condition~\eqref{condy} is satisfied under the assumptions of Theorem~\ref{T2}, 
the estimates in~\eqref{R2} hold true. 
Therefore, the hypotheses of Theorem~\ref{T2} imply the ones of Theorem~\ref{Tn}, 
and then we get the desired conclusion.

\section{Proof of Theorem \ref{T3}}
We will deduce Theorem~\ref{T3} from Theorem~\ref{T2}. 

First, we would like to notice that, given a function~$u$, 
the extension is not, in general, unique. 
In fact, for example, one can consider the functions~$u:=0$ and~$u:=y^{1-\alpha}$; 
then, they both satisfy~$div(y^{\alpha}\nabla u)=0$ in~$\R^{n+1}_+$ with~$u=0$ on~$\partial\R^{n+1}_+$.

To prove Theorem~\ref{T3}, given functions~$u,v$ satisfying~\eqref{systfrac}, 
we choose extensions~$U,V$ satisfying~\eqref{systext} 
by the Poisson kernel in~\eqref{poisson}. 

Hence, if~$u,v$ are bounded solutions to~\eqref{systfrac}, 
we consider the functions defined in~\eqref{extU} and~\eqref{extV}. 
We recall that~$U,V$ are bounded in~$\R^{n+1}_+$ if~$u,v$ are bounded in~$\R^n$ 
(see Section~\ref{sec:reg}, the comments after Lemma~\ref{lemP}). 

Next we prove a regularity result. 
\begin{lemma}\label{lemreg} 
Let~$u,v$ be bounded and~$C^2_{loc}(\R^n)$, 
and let~$U,V$ be given by~\eqref{extU} and~\eqref{extV}. 

Then, for any~$R>0$ there exists~$C_R>0$ such that 
$$ \left\|y^{\alpha_1}\partial_y U\right\|_{L^{\infty}(\overline{B_R^+})}\leq C_R, 
\qquad  \left\|y^{\alpha_2}\partial_y V\right\|_{L^{\infty}(\overline{B_R^+})}\leq C_R. $$ 
\end{lemma}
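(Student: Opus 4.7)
The plan is a direct estimate of the Poisson integral \eqref{extU}, using only the explicit form of the kernel, the cancellation coming from $P_{\alpha_1}$ being a probability density radial in $\zeta$, and a local second order Taylor expansion of $u$.

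First, a direct differentiation of \eqref{poisson} gives
$$y^{\alpha_1}\partial_y P_{\alpha_1}(\zeta,y) = C_{n,\alpha_1}\,\frac{(1-\alpha_1)|\zeta|^2 - n\,y^2}{(|\zeta|^2+y^2)^{(n+3-\alpha_1)/2}},$$
so that $|y^{\alpha_1}\partial_y P_{\alpha_1}(\zeta,y)|\leq C(|\zeta|^2+y^2)^{-(n+1-\alpha_1)/2}$. Two moment identities follow: differentiating $\int_{\R^n}P_{\alpha_1}(\zeta,y)\,d\zeta=1$ in $y$ yields $\int_{\R^n}y^{\alpha_1}\partial_y P_{\alpha_1}(\zeta,y)\,d\zeta=0$, and the radial symmetry of $y^{\alpha_1}\partial_yP_{\alpha_1}(\cdot,y)$ yields $\int_{|\zeta|\leq 1}\zeta\, y^{\alpha_1}\partial_yP_{\alpha_1}(\zeta,y)\,d\zeta=0$ for every $y>0$.

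Second, combining these cancellations with \eqref{extU}, for any $(x,y)\in\overline{B_R^+}$ I would write
\begin{equation*}
\begin{split}
y^{\alpha_1}\partial_y U(x,y)
&= \int_{|\zeta|\leq 1} y^{\alpha_1}\partial_y P_{\alpha_1}(\zeta,y)\bigl[u(x-\zeta)-u(x)+\nabla u(x)\cdot\zeta\bigr]\,d\zeta \\
&\qquad + \int_{|\zeta|> 1} y^{\alpha_1}\partial_y P_{\alpha_1}(\zeta,y)\bigl[u(x-\zeta)-u(x)\bigr]\,d\zeta .
\end{split}
\end{equation*}
For the first piece, $u\in C^2_{loc}(\R^n)$ gives the Taylor bound $|u(x-\zeta)-u(x)+\nabla u(x)\cdot\zeta|\leq \tfrac12\|\nabla^2 u\|_{L^\infty(B_{R+1})}|\zeta|^2$, so the integrand is dominated by $C_R|\zeta|^{-(n-1-\alpha_1)}$, which is integrable near zero precisely because $\alpha_1>-1$. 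For the second piece, $|u(x-\zeta)-u(x)|\leq 2\|u\|_{L^\infty(\R^n)}$ and the integrand is dominated by $C\|u\|_\infty|\zeta|^{-(n+1-\alpha_1)}$, which is integrable at infinity precisely because $\alpha_1<1$. Both bounds are independent of $y\geq 0$ and uniform for $x\in B_R$, giving $\|y^{\alpha_1}\partial_y U\|_{L^\infty(\overline{B_R^+})}\leq C_R$.

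The argument for $V$ is identical, with $\alpha_2\in(-1,1)$ and $v\in C^2_{loc}\cap L^\infty$ replacing $\alpha_1$ and $u$. The only subtlety is the legitimacy of the moment subtractions: for fixed $y>0$ the kernel $y^{\alpha_1}\partial_y P_{\alpha_1}(\cdot,y)$ is smooth, integrable and decaying at infinity (because $\alpha_1<1$), so subtracting the zeroth-order moment $u(x)$ globally and the first-order moment $\nabla u(x)\cdot\zeta$ on the compact region $\{|\zeta|\leq 1\}$ is just a rewriting of absolutely convergent integrals; no principal value is needed. I expect this cancellation step, coupled with the simultaneous use of the two exponent restrictions $\alpha_i>-1$ (for the near-field) and $\alpha_i<1$ (for the far-field), to be the heart of the proof.
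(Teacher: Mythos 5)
Your proposal is correct and follows essentially the same route as the paper's proof: both express $y^{\alpha_1}\partial_y U$ through the explicit kernel $y^{\alpha_1}\partial_y P_{\alpha_1}$, use the normalization $\int P_{\alpha_1}=1$ to subtract $u(x)$, exploit the radial symmetry (oddness in $\zeta$) to subtract $\nabla u(x)\cdot\zeta$ on $\{|\zeta|\leq 1\}$, and then bound the near field by a second-order Taylor estimate (integrable since $\alpha_1>-1$) and the far field by $\|u\|_{L^\infty}$ (integrable since $\alpha_1<1$). The only difference is cosmetic: you differentiate the kernel and state the moment identities up front, whereas the paper first writes $U(x,y)-u(x)$ and then differentiates in $y$.
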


\begin{proof} 
We prove the estimate for~$U$, in the same way one obtains also the estimate for~$V$. 

Since
$$ \int_{\R^n}P_{\alpha_1}(x,y) dy =1, $$
(see Lemma~\ref{lemP}), we can write
\begin{eqnarray*}
U(x,y)-u(x) &=& \int_{\R^n}P_{\alpha_1}(x-\zeta,y)\left[ u(x-\zeta)-u(x)\right] d\zeta \\
&=& C_{n,\alpha_1}\int_{\R^n} \frac{y^{1-\alpha_1}\left[u(x-\zeta)-u(x)\right]}{\left(|\zeta|^2+y^2\right)^{\frac{n+1-\alpha_1}{2}}} d\zeta . 
\end{eqnarray*}
Therefore, 
\begin{equation}\begin{split}\label{60}
y^{\alpha_1}\partial_y U(x,y) &= y^{\alpha_1}\partial_y \left(U(x,y)-u(x)\right) \\
&= C_{n,\alpha_1}\int_{\R^n} \frac{\left[(1-\alpha_1)|\zeta|^2-ny^2\right]\left[u(x-\zeta)-u(x)\right]}{\left(|\zeta|^2+y^2\right)^{\frac{n+3-\alpha_1}{2}}} \, d\zeta \\
&= C_{n,\alpha_1} \int_{|\zeta|\leq1}\frac{\left[(1-\alpha_1)|\zeta|^2-ny^2\right]\left[u(x-\zeta)-u(x)\right]}{\left(|\zeta|^2+y^2\right)^{\frac{n+3-\alpha_1}{2}}}\, d\zeta \\ 
&\quad + C_{n,\alpha_1} \int_{|\zeta|\geq1}\frac{\left[(1-\alpha_1)|\zeta|^2-ny^2\right]\left[u(x-\zeta)-u(x)\right]}{\left(|\zeta|^2+y^2\right)^{\frac{n+3-\alpha_1}{2}}}\, d\zeta.
\end{split}\end{equation}

We estimate the first integral in the right-hand side of~\eqref{60}. 
Since the functions 
$$\frac{(1-\alpha_1)|\zeta|^2\zeta}{\left(|\zeta|^2+y^2\right)^{\frac{n+3-\alpha_1}{2}}}, \qquad \frac{ny^2\zeta}{\left(|\zeta|^2+y^2\right)^{\frac{n+3-\alpha_1}{2}}} $$
are odd with respect to~$\zeta$, we have that
$$\int_{|\zeta|\leq1}\frac{(1-\alpha_1)|\zeta|^2\nabla u(x)\cdot\zeta}{\left(|\zeta|^2+y^2\right)^{\frac{n+3-\alpha_1}{2}}}\, d\zeta=0 = \int_{|\zeta|\leq1}\frac{ny^2\nabla u(x)\cdot\zeta}{\left(|\zeta|^2+y^2\right)^{\frac{n+3-\alpha_1}{2}}}\, d\zeta . $$
Therefore, we can write 
\begin{eqnarray*}
&&\int_{|\zeta|\leq1}\frac{\left[(1-\alpha_1)|\zeta|^2-ny^2\right]\left[u(x-\zeta)-u(x)\right]}{\left(|\zeta|^2+y^2\right)^{\frac{n+3-\alpha_1}{2}}}\, d\zeta \nonumber\\ &=& 
\int_{|\zeta|\leq1}\frac{\left[(1-\alpha_1)|\zeta|^2-ny^2\right]\left[u(x-\zeta)-u(x)\right]}{\left(|\zeta|^2+y^2\right)^{\frac{n+3-\alpha_1}{2}}}\, d\zeta \nonumber\\&&\qquad -\int_{|\zeta|\leq1}\frac{\left[(1-\alpha_1)|\zeta|^2-ny^2\right]\nabla u(x)\cdot\zeta}{\left(|\zeta|^2+y^2\right)^{\frac{n+3-\alpha_1}{2}}}\, d\zeta. 
\end{eqnarray*}
Hence, we get 
\begin{equation}\begin{split}\label{62}
&\int_{|\zeta|\leq1}\frac{\left[(1-\alpha_1)|\zeta|^2-ny^2\right] \left[u(x-\zeta)-u(x)\right]}{\left(|\zeta|^2+y^2\right)^{\frac{n+3-\alpha_1}{2}}}\, d\zeta \\  \leq & 
\int_{|\zeta|\leq1}\frac{\left[(1-\alpha_1)|\zeta|^2-ny^2\right]\, |u(x-\zeta)-u(x)-\nabla u(x)\cdot\zeta|}{\left(|\zeta|^2+y^2\right)^{\frac{n+3-\alpha_1}{2}}}\, d\zeta \\ \leq & 
\int_{|\zeta|\leq1}\frac{\left[(1-\alpha_1)|\zeta|^2-ny^2\right]|D^2 u(x)|\,|\zeta|^2}{\left(|\zeta|^2+y^2\right)^{\frac{n+3-\alpha_1}{2}}}\, d\zeta \\ \leq & C
\int_{|\zeta|\leq1}\frac{|D^2 u(x)|\,|\zeta|^2}{\left(|\zeta|^2+y^2\right)^{\frac{n+1-\alpha_1}{2}}}\, d\zeta \\ \leq&  C
\int_{|\zeta|\leq1}\frac{|D^2 u(x)|}{|\zeta|^{n-1-\alpha_1}}\, d\zeta,
\end{split}\end{equation}
which is summable. 

Now, we estimate the second integral in the right-hand side of~\eqref{60}:
\begin{equation}\begin{split}\label{63}
&\int_{|\zeta|\geq1}\frac{\left[(1-\alpha_1)|\zeta|^2-ny^2\right]\left[u(x-\zeta)-u(x)\right]}{\left(|\zeta|^2+y^2\right)^{\frac{n+3-\alpha_1}{2}}}\, d\zeta \\ 
\leq & C
\int_{|\zeta|\geq1}\frac{|u(x-\zeta)-u(x)|}{\left(|\zeta|^2+y^2\right)^{\frac{n+1-\alpha_1}{2}}}\, d\zeta \\  \leq & C
\int_{|\zeta|\geq1} \frac{2\|u\|_{L^{\infty}(\R^n)}}{|\zeta|^{n+1-\alpha_1}}\, d\zeta,
\end{split}\end{equation}
which is again summable. 

Putting together~\eqref{60},~\eqref{62} and~\eqref{63} we obtain the bound 
$$ \left\|y^{\alpha_1}\partial_y U\right\|_{L^{\infty}(\overline{B_R^+})}\leq C\left(\|u\|_{L^{\infty}(\R^n)} + 
\|D^2 u\|_{L^{\infty}(B_{R+1})} \right) $$
as desired. 
\end{proof}

Now we give the proof of Theorem~\ref{T3}.  
We take~$U,V$ as defined in~\eqref{extU} and~\eqref{extV} and 
we notice that~\eqref{cond} is satisfied.
Indeed, thanks to 
the local integrability of~$y^{\alpha_1}, y^{\alpha_2},y^{-\alpha_1}, y^{-\alpha_2}$, 
Lemma~\ref{lemreg} and Proposition~\ref{prop:reg}, we have
\begin{eqnarray*}
\int_{B_R^+}y^{\alpha_1}|\nabla U|^2 &=& \int_{B_R^+}y^{\alpha_1}|\partial_y U|^2 + 
\int_{B_R^+}y^{\alpha_1}|\nabla_x U|^2 \\ &\leq& \int_{B_R^+}y^{2\alpha_1}|\partial_y U|^2 y^{-\alpha_1} + C_R \int_{B_R^+}y^{\alpha_1} \\ &\leq& C_R\int_{B_R^+}y^{-\alpha_1} + C_R \int_{B_R^+}y^{\alpha_1} \\ &\leq& C_R,
\end{eqnarray*}
and the same for~$y^{\alpha_2}|\nabla V|^2$. 

Also, we know that either~$(U,V)$ is monotone and~$F_{12}(U,V)\leq0$ 
or~$(U,V)$ is stable, thanks to~\eqref{stable1}, and~$F_{12}(U,V)\geq0$. 

Then, from Theorem~\ref{T2} we have that there exist functions~$U_0$ and~$V_0$, 
and directions~$\omega_U,\omega_V$ such that   
$$ U(x,y)=U_0(\omega_U\cdot x,y), \qquad V(x,y)=V_0(\omega_V\cdot x,y) $$
for any~$x\in\R^2$ and any~$y>0$.

Now, from Lemma~\ref{lem:regUV}, we deduce that~$U,V$ are continuous 
up to~$\left\lbrace y=0\right\rbrace$, 
and then 
$$ U(x,0)=U_0(\omega_U\cdot x,0), \qquad V(x,0)=V_0(\omega_V\cdot x,0). $$

Since, by~\eqref{eqpoisson},~\eqref{extU} and~\eqref{extV}, 
$$ U|_{\partial\R^{n+1}_+}=u, \qquad V|_{\partial\R^{n+1}_+}=v, $$ 
the proof of the first part of Theorem~\ref{T3} is complete. 

\medskip 
We prove next the second part. 
Suppose first that~\eqref{monF12} holds. 
From Theorem~\ref{T2} we know that~$(U,V)$ has one-dimensional symmetry. 
Therefore, by using the inequality in Theorem~\ref{TMon}, we have that 
\begin{eqnarray*}
&& -\int_{\R^n}F_{12}(U,V)\left|\sqrt{\frac{-V_{x_n}}{U_{x_n}}}\nabla_x U + \sqrt{\frac{U_{x_n}}{-V_{x_n}}}\nabla_x V\right|^2\varphi^2  \\ &\leq& 
\int_{\R^{n+1}_+}\left(y^{\alpha_1}|\nabla_x U|^2+y^{\alpha_2}|\nabla_x V|^2\right) |\nabla\varphi|^2. 
\end{eqnarray*}
Choosing~$\varphi_R$ as in~\eqref{fiR} and reasoning as in the proof of Theorem~\ref{Tn}, 
we obtain 
$$ F_{12}(U,V)\left|\sqrt{\frac{-V_{x_n}}{U_{x_n}}}\nabla_x U + \sqrt{\frac{U_{x_n}}{-V_{x_n}}}\nabla_x V\right|^2=0, \qquad \mbox{for\ a.e.\ } x\in\R^n, $$
since $F_{12}(U,V)\leq0$. 

By~\eqref{monF12} we have that there exists~$\bar x\in\R^n$ such 
that~$F_{12}(U(\bar x,0),V(\bar x,0))<0$. Therefore, 
$$ \sqrt{\frac{-V_{x_n}(\bar x,0)}{U_{x_n}(\bar x,0)}}\nabla_x U(\bar x,0) + \sqrt{\frac{U_{x_n}(\bar x,0)}{-V_{x_n}(\bar x,0)}}\nabla_x V(\bar x,0)=0, $$
which gives that 
\begin{equation}\label{-999}
\nabla_x U(\bar x,0)=h(\bar x)\nabla_x V(\bar x,0),
\end{equation} 
for some function~$h$. 
Since we know that~$(U,V)$ has a one-dimensional symmetry, 
we have that~$\nabla_x U(\bar x,0)$ is proportional to~$\omega_U$ 
and~$\nabla_x V(\bar x,0)$ is proportional to~$\omega_V$. 
Therefore,~\eqref{-999} implies that 
\begin{equation}\label{-0}
\omega_U=\pm \omega_V.
\end{equation}
Now, if~$\omega_U=-\omega_V$, 
$u(x)=u_0(\omega_U\cdot x)$ and~$v(x)=v_0(\omega_V\cdot x)$,
then we can define~$\tilde v_0(t):=v_0(-t)$ and obtain~$v(x)=\tilde v_0
(\omega_U\cdot x)$ (i.e., from~\eqref{-0} we obtain that
we can choose~$\omega_U=\omega_V$ up to renaming the one-dimensional
function that describes~$v$). 
Hence, we have that~$\omega_U=\omega_V$. 

Finally, we assume that~\eqref{stabF12} holds and we show 
that~$\omega_U=\omega_V$. For this, we observe that if~$u$ is constant,
we can choose~$\omega_U$ as we like (and in particular we can 
choose~$\omega_U=\omega_V$). Since the same argument holds for~$v$, we
may assume that
\begin{equation}\label{-1}
{\mbox{both $u$ and $v$ are non-constant,}}\end{equation}
otherwise we are done. 
As in the case of monotone solutions, it is sufficient to prove~\eqref{-0} 
(see comments after~\eqref{-0}). 
Hence, we argue by contradiction and we assume that~\eqref{-0}
is not true, i.e.
\begin{equation}\label{-00}
\omega_U\not=\pm \omega_V.
\end{equation}
Then we claim that 
\begin{equation}\label{-2}
\begin{split}
&{\mbox{there exists
$x_\sharp\in \R^2$ such that $u(x_\sharp)\in I_u$,
$v(x_\sharp)\in I_v$,}} \\
&{\mbox{$\nabla u(x_\sharp)\ne0$
and $\nabla v(x_\sharp)\ne0$. 
}}\end{split}\end{equation}
To prove this, we notice that, since~$(I_u\times I_v)\cap 
\Im(u,v)\ne\varnothing$, there exists~$x_1\in\R^2$ such that~$u(x_1)\in 
I_u$ (and also~$v(x_1)\in I_v$). By~\eqref{-1}, there 
exists~$x_2\in\R^2$ such that~$u(x_2)\ne u(x_1)$,
say~$u(x_2)>u(x_1)$.
Hence, by continuity,
there exists~$x_3$ on the open segment that joins~$x_1$ and~$x_2$
such that~$u(x_3)\in I_u$ and~$u(x_3)>u(x_1)$. 
Therefore, by the Fundamental Theorem of Calculus, there exists~$x_4$ on 
the open segment that joins~$x_1$ 
and~$x_3$ such that~$\nabla u(x_4)\ne0$. Now we denote~$\pi_u$ the 
hyperplane normal to~$\omega_U$ passing through~$x_4$. Since~$u$
has one-dimensional symmetry, we know that $u$ is constant
on~$\pi_u$ with value in~$I_u$,
and~$\nabla u$ is a constant non-zero 
vector
on~$\pi_u$ that is parallel to~$\omega_U$.

By performing a similar argument on~$v$, we obtain that there
exists a hyperplane~$\pi_v$ normal to~$\omega_V$
such that~$v$ is constant
on~$\pi_v$ with value in~$I_v$,
and~$\nabla v$ is a constant non-zero 
vector
on~$\pi_v$ that is parallel to~$\omega_V$.

As a consequence of~\eqref{-00}, $\pi_u$ and~$\pi_v$
must intersect. Let~$x_\sharp\in \pi_u\cap \pi_v$.
Then, since~$x_\sharp\in \pi_u$, we have that~$u(x_\sharp)\in I_u$
and~$\nabla u(x_\sharp)\ne0$, while the fact that~$x_\sharp\in \pi_v$
implies that~$v(x_\sharp)\in I_v$
and~$\nabla v(x_\sharp)\ne0$, thus proving~\eqref{-2}.

By continuity, from~\eqref{-2} we deduce that
\begin{equation}\label{-22}\begin{split}
&{\mbox{there exists a non-empty open set~$\Omega\subset\R^2$
such that}}\\ &{\mbox{$u(x)\in I_u$,
$v(x)\in I_v$, $\nabla u(x)\ne0$
and $\nabla v(x)\ne0$ for all $x\in\Omega$.
}}\end{split}\end{equation}
Now, since we know that $(U,V)$ has one-dimensional symmetry, from the 
inequality in Theorem~\ref{T1} we have that 
\begin{eqnarray*}
&& 2\int_{\partial\R^{n+1}_+} F_{12}(U,V)\left(|\nabla_x U|\cdot |\nabla_x V|-\nabla_x U\cdot\nabla_x V\right)\varphi^2 \\ &\leq& \int_{\R^{n+1}_+}\left(y^{\alpha_1}|\nabla_x U|^2 +y^{\alpha_2}|\nabla_x V|^2\right)|\nabla\varphi|^2. 
\end{eqnarray*} 
Choosing~$\varphi_R$ as in~\eqref{fiR} and reasoning as in the proof of Theorem~\ref{Tn}, 
we obtain
$$ F_{12}(U,V)\left(|\nabla_x U|\cdot |\nabla_x V|-\nabla_x U\cdot\nabla_x V\right)=0, \qquad \mbox{\ for\ a.e.\ } x\in \R^n, $$ 
since~$F_{12}(U,V)\geq0$. 

By~\eqref{-22} we have that there exists~$x_\star \in\Omega$ such 
that~$F_{12}(U(x_\star,0),V(x_\star,0))>0$. Therefore, 
\begin{equation}\begin{split}\nonumber 
& |\nabla_x U(x_\star,0)|\cdot |\nabla_x V(x_\star,0)|-\nabla_x 
U(x_\star,0)\cdot\nabla_x V(x_\star,0)
\\ =& |\nabla_x U(x_\star,0)|\cdot |\nabla_x V(x_\star,0)|\\ 
&\qquad-|\nabla_x 
U(x_\star,0)|\cdot |\nabla_x V(x_\star,0)|\frac{\nabla_x 
U(x_\star,0)}{|\nabla_x 
U(x_\star,0)|}\cdot\frac{\nabla_x V(x_\star,0)}{|\nabla_x V(x_\star,0)|} 
=0.
\end{split}\end{equation}
Since~$\nabla_x U(x_\star,0)=\nabla u(x_\star)\ne0$ and~$\nabla_x 
V(x_\star,0)=\nabla v(x_\star)\ne0$
by~\eqref{-22}, we conclude that
$$ \frac{\nabla_x U(x_\star,0)}{|\nabla_x U(x_\star,0)|}\cdot 
\frac{\nabla_x 
V(x_\star,0)}{|\nabla_x V(x_\star,0)|}=1. $$
Since we know that~$(U,V)$ has a one dimensional symmetry,
we have that~$\nabla_x U(x_\star,0)$ is proportional 
to~$\omega_U$
and~$\nabla_x V(x_\star,0)$ is proportional to~$\omega_V$. We 
obtain that
$$ \omega_U\cdot \omega_V=\pm 1.$$
This and the Cauchy Inequality imply that~$\omega_{U}=\pm \omega_{V}$. 
This concludes the proof of Theorem~\ref{T3}. 

\section*{Acknowledgments} The authors want to thank \emph{Enrico Valdinoci} 
for very helpful discussions and comments and the anonymous Referee for 
his or her deep observations.

\vspace{2mm}

\end{document}